\newtheorem{theorem}{Theorem}[section]
\newtheorem{proposition}[theorem]{Proposition}
\newtheorem{lemma}[theorem]{Lemma}
\newtheorem{corollary}[theorem]{Corollary}
\theoremstyle{remark}
\newtheorem{remark}[theorem]{Remark}
\newtheorem{example}[theorem]{Example}
\newtheorem{question}[theorem]{Question}
\newcommand{\Mod}{\mathrm{Mod}}
\newcommand{\T}{\mathcal{T}}
\newcommand{\A}{\mathcal{A}}
\newcommand{\B}{\mathcal{B}}
\newcommand{\F}{\mathcal{F}}
\newcommand{\M}{\mathcal{M}}
\newcommand{\N}{\mathcal{N}}
\newcommand{\ML}{\mathcal{ML}}
\newcommand{\diam}{\mathrm{diam}}
\newcommand{\X}{\mathcal{X}}
\newcommand{\Y}{\mathcal{Y}}
\newcommand{\Z}{\mathcal{Z}}
\newcommand{\floor}[1]{\left\lfloor #1 \right\rfloor}
\definecolor{OliveGreen}{rgb}{0,0.6,0}
\newtheoremstyle{TheoremNum}
{\topsep}{\topsep}              
{\itshape}                      
{}                              
{\bfseries}                     
{.}                             
{ }                             
{\thmname{#1}\thmnote{ \bfseries #3}}
\theoremstyle{TheoremNum}
\newtheorem{thmn}{Theorem}
\begin{document}

\title{Growth Rate Of Dehn Twist Lattice Points In Teichm\"{u}ller Space}
\author{Jiawei Han}
\address{Department of Mathematics, Vanderbilt University, Nashville, Tennessee 37235}
\email{jiawei.han@vanderbilt.edu}

\begin{abstract}
Athreya, Bufetov, Eskin and Mirzakhani \cite{athreya2012lattice} have shown the number of mapping class group lattice points intersecting a closed ball of radius $R$ in Teichm\"{u}ller space is asymptotic to $e^{hR}$, where $h$ is the dimension of the Teichm\"{u}ller space. In contrast we show the number of Dehn twist lattice points intersecting a closed ball of radius $R$ is coarsely asymptotic to $e^{\frac{h}{2}R}$. Moreover, we show the number multi-twist lattice points intersecting a closed ball of radius $R$ grows coarsely at least at the rate of $R \cdot e^{\frac{h}{2}R}$.
\end{abstract}

\maketitle

\section{Introduction}
\subsection{Motivation}
Let $M$ be a compact, negatively curved Riemannnian manifold and denote $\tilde{M}$ its universal cover. Then its fundamental group $\pi_1(M)$ acts on $\tilde{M}$ by isometries. Given any $x \in \tilde{M}, R > 0$, let $B_R(x)$ denote the ball of radius $R$ in $\tilde{M}$ centered at $x$. It's a classical result from G.A. Margulis that
\begin{theorem}[Margulis \cite{margulis2004some}]
	There is a function $c \colon M \times M \to \mathbb{R}^+$ so that for every $x, y \in \tilde{M}$,
	\begin{align*}
	|\pi_1(M) \cdot y \cap B_R(x)| \sim c(p(x),p(y))e^{hR}
	\end{align*}
	where $h$ equals to the dimension of the manifold, which is the topological entropy of the geodesic flow on the unit tangent bundle of $M$. 
\end{theorem}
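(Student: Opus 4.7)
The plan is to follow Margulis's original dynamical approach, reducing the lattice-point count to an equidistribution statement for the geodesic flow on the unit tangent bundle $SM$. First observe that $|\pi_1(M) \cdot y \cap B_R(x)|$ equals the number of geodesic arcs in $M$ from $p(x)$ to $p(y)$ of length at most $R$: each $\gamma \in \pi_1(M)$ with $d(x,\gamma y) \le R$ projects to a geodesic segment of length $d(x,\gamma y)$ in $M$, and distinct $\gamma$ give distinct lifts based at $x$. So the problem is to count geodesic arcs between two fixed points of $M$ with length in an interval.

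Next I would construct the Bowen--Margulis measure $\mu_{\mathrm{BM}}$ on $SM$, which is the unique measure of maximal entropy $h$ for the geodesic flow $g_t$. In the Hopf/Bowen--Margulis coordinates $\mu_{\mathrm{BM}}$ has a local product structure with conditional measures $\mu^u$ on unstable horospheres and $\mu^s$ on stable horospheres, transforming as $(g_t)_* \mu^u = e^{ht}\mu^u$ and $(g_t)_* \mu^s = e^{-ht}\mu^s$. Margulis's thickening argument then compares the desired count to a dynamical intersection number: for small flow boxes $U \ni x$ and $V \ni y$ projecting injectively to $M$, the number of $\gamma$ with $d(x,\gamma y) \in [R-\epsilon,R]$ is approximated, after normalizing by the conditional measures, by $\mu_{\mathrm{BM}}(p(U) \cap g_{-R}(p(V)))$. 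Mixing of $g_t$ with respect to $\mu_{\mathrm{BM}}$ (which follows from the Anosov property in negative curvature) converts this to $\mu_{\mathrm{BM}}(p(U))\,\mu_{\mathrm{BM}}(p(V))/\mu_{\mathrm{BM}}(SM)$ in the limit; summing over shells and using the $e^{ht}$ scaling of $\mu^u$ yields the asymptotic $c(p(x),p(y))\,e^{hR}$, with $c$ built out of the Patterson--Sullivan--Margulis densities at the two basepoints.

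The main obstacle is establishing the mixing of $g_t$ with respect to the measure of maximal entropy, which is the deep dynamical input of the argument; once mixing is available, the product-structure and counting computations are essentially formal bookkeeping. A secondary technical issue is handling the thin shells $B_R \setminus B_{R-\epsilon}$ uniformly enough that their contributions telescope cleanly to the full ball count, which requires the conditional measures on unstable leaves to depend continuously on the leaf and the flow boxes to be chosen with boundary of $\mu_{\mathrm{BM}}$-measure zero.
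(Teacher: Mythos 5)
This theorem is quoted in the paper as classical background and attributed to Margulis; the paper contains no proof of it, so there is nothing in the source to compare your argument against line by line. That said, your outline is a faithful summary of Margulis's original dynamical proof (and its modern exposition): reduce the orbit count to counting geodesic arcs between $p(x)$ and $p(y)$, construct the measure of maximal entropy $\mu_{\mathrm{BM}}$ with its local product structure and the $e^{\pm ht}$ scaling on unstable/stable conditionals, run the thickening argument over small flow boxes, invoke mixing of $g_t$ with respect to $\mu_{\mathrm{BM}}$ to evaluate the intersection, and integrate over shells to produce $c(p(x),p(y))\,e^{hR}$. You have also correctly identified where the real work lies: establishing mixing (and, for variable negative curvature, even constructing the Margulis densities and the measure of maximal entropy) is the deep input, while the rest is careful bookkeeping with boundary-negligible boxes and continuity of the conditionals.

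One small caution, directed at the \emph{statement} rather than your proof: as written in the paper, ``$h$ equals to the dimension of the manifold, which is the topological entropy of the geodesic flow'' is imprecise. For a general compact negatively curved $M$ the exponent is the topological entropy $h_{\mathrm{top}}(g_t)$ of the geodesic flow, which need not equal $\dim M$ (for constant curvature $-1$ it is $\dim M - 1$). Your sketch correctly works with the entropy throughout, so no change is needed on your end, but be aware that the displayed identification of $h$ with the dimension is an artifact of the analogy to the Teichm\"uller setting, where $h = 6g-6$ happens to be the dimension.
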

Here and throughout, the notation $f(R) \sim g(R)$ means $\lim_{R \to \infty} \frac{f(R)}{g(R)} = 1$.

Inspired by this result, Athreya, Bufetov, Eskin and Mirzakhani studied lattice point asymptotics in Teichm\"{u}ller space. Let $S_g, g \ge 2$ denote the closed surface of genus $g$, and let $\Mod_g, \T_{g}$ denote the corresponding mapping class group and Teichm\"{u}ller space respectively, then $\Mod_g$ acts on $\T_{g}$ by isometries. They showed

\begin{theorem}[Athreya, Bufetov, Eskin and Mirzakhani \cite{athreya2012lattice}] \label{ABEM}
	For any $\X, \Y \in \mathcal T_g$, we have
	\begin{align*}
	| \Mod_g \cdot \Y \cap B_R(\X)| \sim \Lambda(\X)\Lambda(\Y) e^{hR}
	\end{align*}
	where $h= 6g-6$ is the dimension of $\T_{g}$ and $\Lambda\colon \T_{g} \to \mathbb{R}^{+}$ is a bounded function called Hubbard-Masur function.
\end{theorem}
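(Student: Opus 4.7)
The plan is to adapt the Eskin--McMullen approach to counting lattice orbits, with mixing of the Teichm\"{u}ller geodesic flow as the principal dynamical input. First I would parametrize the ball $B_R(\X)$ by the flow: every point in $B_R(\X)\setminus\{\X\}$ lies on a unique Teichm\"{u}ller geodesic based at $\X$, so $B_R(\X) = \{g_t q : q \in Q_1(\X),\ 0\le t\le R\}$, where $g_t$ is the Teichm\"{u}ller flow and $Q_1(\X)$ is the sphere of unit-area holomorphic quadratic differentials on $\X$. Via the Hubbard--Masur correspondence, $Q_1(\X)$ is identified with the sphere of measured foliations of unit extremal length at $\X$, and the function $\Lambda(\X)$ is precisely the Thurston-measure volume of the ball it bounds.

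Next I would smooth the count. Pick a small bump $\psi_\epsilon$ supported near $\Y$ on $\T_g$; the pullback sum $\bar\psi_\epsilon(z) = \sum_{\gamma \in \Mod_g} \psi_\epsilon(\gamma^{-1} z)$ descends to a compactly supported function on $\M_g = \T_g / \Mod_g$, and
\begin{align*}
\int_{B_R(\X)} \bar\psi_\epsilon(z)\, dz \ \approx \ \Bigl(\textstyle\int \psi_\epsilon\Bigr) \cdot |\Mod_g \cdot \Y \cap B_R(\X)|
\end{align*}
up to boundary errors of strictly smaller exponential order in $R$. The problem thus reduces to computing the pushforward of Teichm\"{u}ller volume from $B_R(\X)$ down to $\M_g$, paired against $\bar\psi_\epsilon$.

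The dynamical heart is equidistribution: the Teichm\"{u}ller flow on the moduli space of unit-area quadratic differentials is mixing with respect to the Masur--Veech measure (Masur, Veech). Combined with the fact that the Jacobian of $g_t$ expands Teichm\"{u}ller volume at rate $e^{ht}$ with $h = 6g-6$, this shows that the $g_R$-pushforward of the natural spherical measure on $Q_1(\X)$ equidistributes to Masur--Veech. Integrating in $t$ over $[0,R]$, evaluating near $\Y$ via the Hubbard--Masur density, and sending $\epsilon \to 0$ produces the constant $\Lambda(\X)\Lambda(\Y)$ in front of $e^{hR}$.

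The main obstacle is the non-compactness of $\M_g$: mixing by itself does not prevent a positive proportion of the sphere at radius $R$ from spending most of its time in the thin part, where the mapping class group orbit is sparse and contributions could be anomalous. The technical heart of \cite{athreya2012lattice}, and what I would expect to either reproduce or invoke, is Athreya's quantitative non-divergence estimate for the Teichm\"{u}ller flow combined with exponential mixing estimates on compact sets; together these upgrade the qualitative equidistribution above into the sharp asymptotic $\sim \Lambda(\X)\Lambda(\Y) e^{hR}$, ruling out unwanted cusp contributions and giving an honest $\sim$ rather than a coarse two-sided $e^{hR}$ bound.
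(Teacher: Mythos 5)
This is a cited background theorem (from Athreya--Bufetov--Eskin--Mirzakhani) that the paper states without proof, so there is no internal proof to compare against. That said, your sketch correctly reconstructs the ABEM strategy: the Eskin--McMullen-style smoothing, parametrization of the ball by the Teichm\"{u}ller flow via the Hubbard--Masur correspondence, mixing of the flow with respect to the Masur--Veech measure, and quantitative non-divergence estimates to control the cusp are indeed the main ingredients, and $\Lambda$ is indeed the Thurston-measure volume of the extremal-length unit ball. Two points to tighten if this were turned into a full argument: (i) the boundary annulus of $B_R(\X)$ is \emph{not} of strictly smaller exponential order in $R$; the smoothing is controlled because an annulus of width $\epsilon$ carries a small \emph{fraction} of the $e^{hR}$ volume, so one squeezes the count between integrals over $B_{R-\epsilon}$ and $B_{R+\epsilon}$ and then lets $\epsilon \to 0$; and (ii) the assertion that ``the Jacobian of $g_t$ expands Teichm\"{u}ller volume at rate $e^{ht}$'' is heuristic --- in ABEM the $e^{hR}$ volume growth of Teichm\"{u}ller balls is itself a theorem established alongside the counting result via non-divergence and a sector version of mixing, not an elementary Jacobian computation available in advance.
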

We note that the function $\Lambda$ was later shown to be a constant by Mirzakhani, see \cite{10.1007/s11511-015-0129-6}.

The Nielsen-Thurston Classification \cite{thurston1988} says every element in $\Mod_g$ is one of the three types: periodic, reducible, or pseudo-Anosov. Letting $PA \subset \Mod_g$ denote the subset of pseudo-Anosov elements, Maher showed that, in terms of lattice points counting, pseudo-Anosov elements are generic in the following sense. 
\begin{theorem}[Maher \cite{maher2010asymptotics}] \label{Maher}
	For any $\X, \Y \in \mathcal T_g$, we have
	\begin{align*}
	\frac{|PA \cdot \Y \cap B_R(\X)|}{| \Mod_g \cdot \Y \cap B_R(\X)|} \sim 1.
	\end{align*}
\end{theorem}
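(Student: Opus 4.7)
By Theorem~\ref{ABEM}, the denominator satisfies
\[
|\Mod_g\cdot Y\cap B_R(X)|\sim\Lambda(X)\Lambda(Y)\,e^{hR},
\]
so the claim is equivalent to showing that the complementary count is subdominant:
\[
|(\Mod_g\setminus PA)\cdot Y\cap B_R(X)|=o(e^{hR}).
\]
By the Nielsen--Thurston classification, $\Mod_g\setminus PA$ is the union of the periodic and reducible elements, and I would treat these two subsets separately.

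For the periodic elements, the key inputs are: (i) $\Mod_g$ contains only finitely many conjugacy classes of torsion; (ii) by Nielsen realization, each torsion element $\phi$ fixes a point $p_\phi\in\T_g$, and the fixed-point locus forms a proper orbifold stratum; (iii) the displacement of a torsion isometry at $Y$ is controlled by $d_\T(Y,p_\phi)$. Writing a typical conjugate as $\phi'=g\phi g^{-1}$, the relation $d_\T(Y,\phi' Y)\le 2d_\T(Y,g p_\phi)$ together with $\phi' Y\in B_R(X)$ forces $gp_\phi$ into a region of $\T_g$ whose lattice growth exponent is strictly less than $h$, so applying a Theorem~\ref{ABEM}-type estimate to the orbit $\Mod_g\cdot p_\phi$ at the reduced radius yields a bound of the form $e^{h'R}$ with $h'<h$. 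Summing over the finitely many torsion conjugacy classes gives a periodic contribution of $o(e^{hR})$.

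The dominant and more delicate case is the reducibles. Every reducible element stabilizes a nonempty essential multicurve, so
\[
\{\text{reducibles}\}\subset\bigcup_\alpha\Stab(\alpha)\setminus\{1\},
\]
with $\alpha$ ranging over isotopy classes of essential multicurves. For each fixed $\alpha$, the group $\Stab(\alpha)$ is, up to finite index, an extension of a product of mapping class groups of the complementary subsurfaces by the free abelian group of Dehn twists about components of $\alpha$; combining the paper's main theorem on Dehn-twist lattice growth (of order $e^{(h/2)R}$) with an inductive estimate for the smaller subsurface mapping class groups bounds the $\Stab(\alpha)$-lattice count in $B_R(X)$ by $\mathrm{poly}(R)\cdot e^{h_\alpha R}$ for some $h_\alpha<h$ depending on the topological type of $\alpha$. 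The main obstacle is then summing this estimate over the infinite family of multicurves: I would argue, by a short-curve/thin-part analysis combined with Mirzakhani's simple-multicurve counting, that only a polynomially (in $R$) growing subfamily of $\alpha$ can support an element $g\in\Stab(\alpha)$ with $gY\in B_R(X)$, so that the total reducible contribution remains $o(e^{hR})$. Managing the interplay between the per-multicurve exponential estimate and the infinite sum over $\alpha$ is the principal technical difficulty, and I would expect it to require careful use of the geometry of Teichm\"uller space near the stratum associated to $\alpha$.
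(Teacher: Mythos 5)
The paper does not prove this theorem: it is quoted from Maher's article \cite{maher2010asymptotics}, so there is no internal proof to compare against. Evaluating your proposal against Maher's actual argument, the routes are genuinely different. Maher's proof is fundamentally \emph{measure-theoretic}: he combines the Athreya--Bufetov--Eskin--Mirzakhani equidistribution of lattice points on spheres in $\T_g$ (with respect to the Thurston/Lebesgue measure class on $\mathcal{PMF}$) with the fact that the directions corresponding to non-pseudo-Anosov behavior form a null set, so the non-pA proportion of lattice points tends to zero. Your proposal is instead a direct geometric counting argument, decomposing $\Mod_g\setminus PA$ into periodic and reducible elements and trying to bound each contribution by $o(e^{hR})$.

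The periodic part of your outline is plausible (finitely many torsion conjugacy classes, orbits of fixed-point loci), though the quantitative claim that the relevant orbit count has exponent $h'<h$ would need to be established; it does not follow directly from Theorem~\ref{ABEM}, which counts full $\Mod_g$-orbits, not orbits constrained to a proper stratum. The genuine gap is in the reducible case. You assert that only a polynomially growing family of multicurves $\alpha$ can have $\Stab(\alpha)\cdot Y$ meeting $B_R(X)$. This is false. For $g\in\Stab(\alpha)$ with $gY\in B_R(X)$, the coarse distance formula only forces $\ell_Y(\alpha)\lesssim e^{R/2}$, and Mirzakhani's counting formula (Theorem~\ref{Counting Formula}) gives on the order of $e^{hR/2}$ such multicurves --- exponentially many, not polynomially many. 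So the naive bound
\[
\sum_\alpha |\Stab(\alpha)\cdot Y\cap B_R(X)| \le (\text{number of relevant } \alpha)\cdot\max_\alpha |\Stab(\alpha)\cdot Y\cap B_R(X)|
\]
does not close. What actually saves the argument --- and what Maher exploits via equidistribution rather than brute summation --- is that the per-$\alpha$ count decays as $\ell_Y(\alpha)$ grows, so the exponentially many ``long'' $\alpha$ each contribute very few lattice points, and the exponentially few ``short'' $\alpha$ contribute at most $e^{(h-\delta)R}$ each. Making this trade-off precise is the crux, and your proposal identifies the difficulty correctly but proposes an incorrect resolution (polynomial count of multicurves). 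Either Maher's equidistribution argument or a much more careful two-parameter balancing over $(\ell_Y(\alpha),R)$ is needed.
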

Note the above Theorems \ref{ABEM} and \ref{Maher} also hold for punctured surfaces $S_{g,n}$ satisfying $3g+n \ge 5$.

Thus, it is natural to consider the asymptotic growth rate of reducible and periodic elements. A typical reducible element can be decomposed as a product of Dehn twists about disjoint simple closed curves and a partial pseudo-Anosov element on subsurfaces \cite{birman1983}. Dehn twists are also in a sense the most fundamental elements of mapping class groups, being both relatively elementary, yet sufficient to generate the mapping class group \cite{10.1007/BFb0063188}. This motivates us to understand the asymptotic growth behavior of Dehn twists.

\subsection{Statement of Main Results}
Throughout this paper we let $S_{g,n}$ denote a closed surface of genus $g$ with $n$ punctures such that $3g-3+n > 0$, and we let $\Mod_{g,n}, \T_{g,n}$ and $\M_{g,n}$ denote the corresponding mapping class group, Teichm\"{u}ller space and moduli space respectively. We use $h=6g+2n-6$ to denote the dimension of $\T_{g,n}$. For any $\epsilon > 0$, we denote $\T_{g,n}^\epsilon$ the $\epsilon$-thick part of $\T_{g,n}$. By saying $\alpha$ is a simple closed curve on $S_{g,n}$, we mean it's a non-trivial isotopy class of essential simple closed curves on $S_{g,n}$.

A multicurve $\alpha$ is a formal sum $\alpha = \sum_{i=1}^k a_i \alpha_i$ where $1 \le k \le \frac{h}{2}, a_i \in \mathbb{Z} \setminus \{0\}$, and $\alpha_i$ are pairwise disjoint simple closed curves on $S_{g,n}$. By this definition, simple closed curves are also multicurves. Let $\ML(\mathbb{Z})$ denote the set of multicurves on $S_{g,n}$ and let $\mathcal S \subset \ML(\mathbb{Z})$ denote the set of all simple closed curves.  A multicurve $\alpha = \sum_{i=1}^k a_i \alpha_i$ is said to be positive if all coefficients are positive and is said to be negative if all coefficients are negative. Otherwise, we say $\alpha$ is of mixed sign. Two multicurves are of the same topological type if up to isotopy, there is an orientation-preserving homeomorphism taking one multicurve to another. For any $\gamma \in \ML(\mathbb{Z})$, we denote the multicurves of topological type $\gamma$ by $\ML(\gamma)$. In particular if $\gamma$ is a simple closed curve, we denote $\ML(\gamma)$ as $\mathcal S(\gamma)$. Since there are only finitely many topological types of simple closed curves on $S_{g,n}$, $\mathcal S$ is a finite union of sets of the form $\mathcal S(\gamma)$. Meanwhile, there are infinitely many topological types of multicurves, as can be seen by looking at the coefficients. 

For reasons we will see, let's denote $\ML^*(\mathbb{Z})$ the set of multicurves $\alpha = \sum_{i=1}^k a_i \alpha_i$ satisfying one of the following conditions.
\begin{enumerate}
	\item $\alpha$ is a weighted or unweighted simple closed curve, i.e., $k =1$.
	\item $\alpha$ is positive or negative, i.e., all coefficients have the same sign.
	\item $\alpha$ is of mixed sign where each $|a_i| \ge 2$.
\end{enumerate}
We write $\ML^*(\gamma)$ instead of $\ML(\gamma)$ when $\gamma \in \ML^*(\mathbb{Z})$.

For any simple closed curve $\alpha$ we let $T_\alpha$ denote the Dehn twist around $\alpha$. In general, for any multicurve $\alpha = \sum_{i=1}^k a_i \alpha_i$, we define $T_\alpha = \prod_{i=1}^{k} T_{\alpha_i}^{a_i}$ and we call this a multi-twist. By this definition, Dehn twists are also multi-twists, and let's call them as twists in general. In our Theorem \ref{Main Theorem} we will consider the following subsets of $\Mod_{g,n}$ consisting of twists:
\begin{enumerate}
	\item $D(\mathcal{ML}^*(\gamma)) = \{T_\alpha \mid \alpha \in \mathcal{ML}^*(\gamma)\} = \{fT_\gamma f^{-1} \mid f \in \Mod_{g,n}\}$, the set of twists about curves of topological type $\gamma$ or, equivalently, the conjugacy class of $T_\gamma$.
	
	\item $D(\mathcal S) = \{T_\alpha \mid \alpha \in \mathcal S\}$, the set of all Dehn twists without powers. $D(\mathcal S)$ is a finite union of sets of the form $D(\mathcal S(\gamma))$.
	
	\item $M(\mathcal S) = \{T_\alpha^k \mid \alpha \in \mathcal S, k \in \mathbb{Z} \}$, the set of all Dehn twists with any powers. Similarly, $M(\mathcal S)$ is a finite union of $M(\mathcal S(\gamma))$. Each $M(\mathcal S(\gamma))$ can be realized as an infinite union of conjugacy classes of $T_\gamma^k, k \in \mathbb{Z}$.
\end{enumerate}

We now introduce some notations. Let $A  > 0$. 
\begin{enumerate}
	\item We say $f(x) \stackrel{+A}{\asymp} g(x)$ if $g(x)-A\le f(x) \le g(x)+A$ for any $x$.
	\item We say $f(x) \stackrel{*A}{\asymp} g(x)$ if $\frac{1}{A} \cdot g(x)\le f(x) \le A \cdot g(x)$ for any $x$.
	\item We say $f(R) \stackrel{*A}\preceq g(R)$ if for any $\lambda > 1$, there exists a $M(\lambda)$ such that $\frac{1}{\lambda A} \cdot f(R) \le g(R)$ for any $R \ge M(\lambda)$.
	\item We say $f(R) \stackrel{*A}\sim g(R)$ if $f(R) \stackrel{*A}\preceq g(R)$ and $g(R) \stackrel{*A}\preceq f(R)$. 
\end{enumerate}
Moreover, we say $f,g$ are coarsely asymptotic if $f(R) \stackrel{*A}\sim g(R)$ for some coefficient $A$. Notice the notation $f(R) \sim g(R)$ is the same as $f(R) \stackrel{*1}\sim g(R)$, i.e. $f,g$ are asymptotic when they are coarsely asymptotic with coefficient 1.

Recall $\T_{g,n}^\epsilon$ denotes the $\epsilon$-thick part of $\T_{g,n}$ and $h = 6g-6+2n$ denotes the dimension of $\T_{g,n}$. For any $\X, \Y \in \T_{g,n}$, we define $F(\X,\Y) = e^{\frac{h}{2} d_\T(\X,\Y)}$. For any multicurve $\alpha = \sum_{i=1}^k a_i \alpha_i$, we denote the sum of absolute coefficients as $c_\alpha = \sum_{i=1}^k|a_i|$ and we define $F_\alpha(\X,\Y) =(c_\alpha)^\frac{h}{2} e^{\frac{h}{2} d_\T(\X,\Y)}$. Our main theorem below gives coarse asymptotics for $D(\ML^*(\gamma))$.
\begin{theorem}\label{Main Theorem}
Given any $S_{g,n}$ and given any $\epsilon > 0$, there exists a $J > 0$ such that for any multicurve $\gamma \in \ML^*(\mathbb{Z})$ and for any $\X, \Y \in \T_{g,n}^\epsilon$, we have
\begin{align*}
|D\left(\ML^*(\gamma)\right) \cdot \Y \cap B_R(\X)| \stackrel{*JF_\gamma(\X,\Y)}{\sim} n_X(\gamma) \cdot e^{\frac{h}{2} R}
\end{align*}
where $n_X(\gamma)$ is the corresponding Mirzakhani constant, see section \ref{Counting Simple Closed Geodesics}.
\end{theorem}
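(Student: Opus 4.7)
The plan is to reduce the lattice point count to a count of multicurves of topological type $\gamma$ with a length bound at $\X$, and then invoke Mirzakhani's asymptotic formula for multicurve counts. The reduction hinges on a coarse distance formula for multi-twists.

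First, I would establish, for $\X, \Y \in \T_{g,n}^\epsilon$ and any multicurve $\alpha \in \ML^*(\mathbb{Z})$, a formula of the shape
\begin{align*}
	d_\T(\X, T_\alpha \Y) \stackrel{+A_\epsilon}{\asymp} \Phi\bigl(c_\alpha, \ell_\X(\alpha), \ell_\Y(\alpha)\bigr)
\end{align*}
with $A_\epsilon$ depending only on thickness, where $\Phi$ encodes the ``cost'' of twisting about $\alpha$ in terms of the total twist power $c_\alpha$ together with the lengths (equivalently, extremal lengths in the thick part) of $\alpha$ at the source and target. For a single unweighted Dehn twist, this comes from Kerckhoff's extremal-length formula for $d_\T$ combined with the classical estimate for how extremal length of a transverse curve changes under a twist; for higher powers and multi-twists one promotes this by a Minsky product-region analysis in the thin parts around the components of $\alpha$, or equivalently by a short-marking decomposition of $d_\T$. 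The condition $d_\T(\X, T_\alpha \Y) \le R$ then translates into a length bound of the shape $\ell_\X(\alpha) \le L(R,\X,\Y,\gamma)$.

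Next I would apply Mirzakhani's theorem: the number of multicurves of topological type $\gamma$ with $\ell_\X \le L$ is asymptotic to $n_X(\gamma) L^h$. Substituting the $L$ produced by the distance formula yields a main term of the form $n_X(\gamma) e^{(h/2)R}$. The multiplicative error is packaged as $F_\gamma(\X,\Y)=c_\gamma^{h/2}e^{(h/2)d_\T(\X,\Y)}$: the factor $e^{(h/2)d_\T(\X,\Y)}$ arises from transferring length bounds between $\X$ and $\Y$ (using $\ell_\Y/\ell_\X \le e^{2d_\T(\X,\Y)}$), while the factor $c_\gamma^{h/2}$ reflects how $L$ depends on $c_\gamma$ through $\Phi$.

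The main obstacle lies in the passage from counting multicurves to counting twists, which requires an essentially injective correspondence $\alpha \mapsto T_\alpha$ on each set $\ML^*(\gamma)$. Cases (1) (weighted simple closed curves) and (2) (positive or negative multicurves) are straightforward: distinct representatives yield distinct twists, and the twist determines the multicurve through its action on $\ML$. The delicate case is (3), mixed-sign multicurves with $|a_i|\ge 2$. The restriction $|a_i|\ge 2$ is imposed precisely to rule out the Dehn twist relations that could cause partial cancellation between oppositely-oriented components of $T_\alpha$; verifying that, under this hypothesis, $T_\alpha$ still recovers $\alpha$ up to the ambiguity already present in the topological type — and then tracking the $F_\gamma$ multiplicative error uniformly across all three cases — is where I expect the bulk of the technical work to lie.
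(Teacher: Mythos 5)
Your high-level strategy matches the paper's: establish a coarse distance formula for multi-twists, then plug into Mirzakhani's counting theorem, with $F_\gamma(\X,\Y)$ absorbing the error from transferring between basepoints and from the $c_\gamma$-dependence of the distance estimate. (The paper first reduces $d_\T(\X, T_\alpha\Y)$ to $d_\T(\X, T_\alpha\X)$ via the isometry triangle inequality (\ref{balls in balls}), so it never needs $\ell_\Y(\alpha)$, but this is organizational.)

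However, you misdiagnose where the technical difficulty lies. You identify the delicate point in case (3) as showing that $\alpha \mapsto T_\alpha$ is injective when $\alpha$ is mixed-sign, speculating that the hypothesis $|a_i| \ge 2$ rules out Dehn twist relations causing cancellation. This is not the issue: multi-twists along a fixed collection of pairwise-disjoint curves generate a free abelian subgroup of $\Mod_{g,n}$, and the set of simple closed curves fixed by a multi-twist determines its support, so $T_\alpha$ recovers $\alpha$ for every multicurve, mixed-sign or not, with no restriction on the $a_i$. The real obstruction, and the actual reason for restricting $\ML^*(\mathbb{Z})$, is that the coarse distance formula itself fails for mixed-sign multicurves with some $|a_i|=1$. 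The paper shows in Example \ref{counterexample} and Remark \ref{counterexample to coarse distance formula} a sequence $\gamma_n = \alpha_n - \beta_n$ for which $d_\T(\X, T_{\gamma_n}\X) \stackrel{+H'}{\asymp} \log\bigl(\ell_\X(\alpha_n)+\ell_\X(\beta_n)\bigr)$ rather than $\log\bigl(\ell_\X^2(\alpha_n)+\ell_\X^2(\beta_n)\bigr)$; the positive and negative twisting ``cancel'' geometrically (in the effect on transverse lengths), not algebraically. Consequently the substantive work is not the bijection step but obtaining a usable \emph{lower} bound on $\ell_{T_\alpha\X}(\tau)$ in the mixed-sign $|a_i|\ge 2$ case, which the paper does via the universal-cover/Bass-Serre analysis of Theorem \ref{Hyperbolic Length}, Proposition \ref{Hyperbolic Length again}, and Lemma \ref{assumption on length and intersection number}. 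Without a concrete plan for that lower bound, the proposal leaves the hardest part of the theorem unaddressed while spending effort on a step that is automatic.
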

\begin{corollary}\label{Main Corollary}
Given $S_{g,n}$ and given any $\epsilon > 0$, for any $\X, \Y \in \T_{g,n}^\epsilon$, we have
	\begin{align*}
		&|D\left(\mathcal S\right) \cdot \Y \cap B_R(\X)| \stackrel{*JF(\X,\Y)}{\sim} n_X(\mathcal S) \cdot e^{\frac{h}{2} R},  \text{\ if\ } h>0, \\
		&|M\left(\mathcal S\right) \cdot \Y \cap B_R(\X)| \stackrel{*8JF(\X,\Y)}{\sim} n_X(\mathcal S) \cdot e^{\frac{h}{2} R},  \text{\ if\ } \frac{h}{2}>1
	\end{align*}
	where $n_X(\mathcal{S})$ is the corresponding Mirzakhani constant.
\end{corollary}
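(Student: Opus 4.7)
The plan is to decompose both $D(\mathcal{S})$ and $M(\mathcal{S})$ into disjoint unions of conjugacy classes of the form $D(\ML^*(\gamma))$, apply Theorem \ref{Main Theorem} piecewise, and aggregate. The first statement reduces to a finite sum over topological types of simple closed curves; the second requires summing also over integer powers $k$, and the hypothesis $\frac{h}{2}>1$ enters as the convergence condition for $\sum_{k\neq 0}|k|^{-h/2}$.

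For $D(\mathcal{S})$, I use that $S_{g,n}$ supports only finitely many topological types of simple closed curves $\gamma_1,\dots,\gamma_m$ to write $D(\mathcal{S}) = \bigsqcup_{i=1}^{m} D(\mathcal{S}(\gamma_i))$. Each unweighted $\gamma_i$ has $c_{\gamma_i}=1$, so $F_{\gamma_i}(\X,\Y) = F(\X,\Y)$, and Theorem \ref{Main Theorem} yields the coarse asymptotic for each piece with a common coefficient $JF(\X,\Y)$. Summing the finitely many asymptotics and setting $n_X(\mathcal{S}) := \sum_{i} n_X(\gamma_i)$ gives the first statement, since a finite sum of coarse asymptotics with a common coefficient remains coarsely asymptotic with the same coefficient.

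For $M(\mathcal{S})$, I refine to $M(\mathcal{S}) = \bigsqcup_{i=1}^{m} \bigsqcup_{k\in\mathbb{Z}\setminus\{0\}} D(\mathcal{S}(k\gamma_i))$, disjoint because the invariant weighted multicurve determines the multi-twist. Each $k\gamma_i$ lies in $\ML^*(\mathbb{Z})$ by condition~(1), with $c_{k\gamma_i}=|k|$ and hence $F_{k\gamma_i}(\X,\Y) = |k|^{h/2}F(\X,\Y)$. The homogeneity $\ell(k\alpha)=|k|\ell(\alpha)$ underlying Mirzakhani's counting theorem gives $n_X(k\gamma_i) = n_X(\gamma_i)/|k|^{h}$, so the $k$th term in the upper bound from Theorem \ref{Main Theorem} is at most $\lambda J F(\X,\Y) n_X(\gamma_i)|k|^{-h/2} e^{\frac{h}{2}R}$. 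Summing over $k\neq 0$ produces the factor $2\zeta(h/2)$, finite exactly when $h/2>1$ and bounded by $2\zeta(2)=\pi^{2}/3<4$ since $h$ is even with $h/2\geq 2$; this fits comfortably inside the coefficient $8JF(\X,\Y)$. The matching lower bound is immediate from the $k=\pm 1$ terms alone, which already recover $n_X(\mathcal{S})e^{\frac{h}{2}R}$ up to the factor $JF(\X,\Y)$.

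The main technical obstacle is justifying the summation over infinitely many $k$, since the threshold $M(\lambda)$ implicit in the $\stackrel{*A}{\sim}$ notation of Theorem \ref{Main Theorem} a priori depends on the multicurve, and hence on $k$. I plan to truncate at a large cutoff $K$, applying Theorem \ref{Main Theorem} uniformly on the finite block $|k|\leq K$, and bounding the tail via the Dehn-twist displacement estimate $d_\T(\Y,T_\alpha^{k}\Y) \asymp \log|k|$: this forces $T_\alpha^k \Y \notin B_R(\X)$ once $|k|$ exceeds a fixed exponential in $R$, so the tail reduces to $\sum_{K<|k|\leq e^{cR}} |k|^{-h/2}$ times a bounded count of curves of type $\gamma_i$ contributing to $B_R(\X)$, which can be made arbitrarily small by taking $K$ large, using convergence of the $\zeta$-series.
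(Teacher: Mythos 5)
Your decomposition and overall strategy track the paper's quite closely: the $D(\mathcal S)$ case is handled exactly as in the paper (finite sum over topological types with $c_{\gamma_i}=1$, threshold $M(\lambda)=2\log r_X(\mathcal S,\lambda)+H$), and for $M(\mathcal S)$ your plan of viewing $M(\mathcal S(\gamma))$ as a union of conjugacy classes $D(\mathcal S(k\gamma))$ and exploiting $n_X(k\gamma)=n_X(\gamma)/|k|^h$ together with $c_{k\gamma}=|k|$ is, after unwinding, the same computation the paper does by applying the coarse distance formula and Mirzakhani's $s_X$ directly. So the route is a mild repackaging rather than a genuinely different proof.

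The place where your outline has a real gap is the tail estimate. You write that the tail is ``$\sum_{K<|k|\le e^{cR}}|k|^{-h/2}$ times a bounded count,'' and can be made small by taking $K$ large. That conflates two ranges that behave differently and require different tools. Set $b=e^{R+H}/r_X^2(\mathcal S,\lambda)$ and $a=e^{R+H}/\epsilon^2$. For $|k|\le b$ the length threshold $e^{(R+H)/2}/\sqrt{|k|}$ still exceeds $r_X(\mathcal S,\lambda)$, so Mirzakhani's asymptotic applies and the per-$k$ count is about $\lambda n_X(\mathcal S)e^{t(R+H)}|k|^{-t}$; this count is \emph{not} bounded in $R$, though summing the $|k|^{-t}$ factor over $|k|>K$ does produce a small multiple of $n_X e^{tR}$, which is your zeta argument. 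For $b<|k|\le a$, however, Mirzakhani's asymptotic is inapplicable; here the per-$k$ count is bounded by the constant $s_X(r_X(\mathcal S,\lambda),\mathcal S)$, but there are $O(e^{R})$ values of $k$, so this piece contributes $O(e^{R})$, and it is dominated by $n_X e^{tR}$ only because $t>1$ and $R$ is taken large --- increasing $K$ does nothing for it. This is precisely how the paper uses $\frac{h}{2}>1$ twice (once for $\sum n^{-t}<2$, once to absorb the $O(e^R)$ term), and why its final coefficient is $8JF(\X,\Y)$ rather than your claimed $4JF(\X,\Y)$; once you account for the $b<|k|\le a$ range your constant will grow. Your lower bound via containment $D(\mathcal S)\subset M(\mathcal S)$ is fine. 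Fixing the tail requires introducing the cutoff $b$ (which depends on $R$, not a fixed $K$) and running the two-range estimate.
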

We remark that when $\frac{h}{2}=1$, $M(\mathcal{S}) = \ML^*(\mathbb Z) = \ML(\mathbb{Z})$ is one dimensional. The coarse asymptotic for $|M\left(\mathcal S\right) \cdot \Y \cap B_R(\X)|$ when $\frac{h}{2}=1$ is separated out as a special case and treated in Corollary \ref{second main corollary}.

The above results says for example, the number of Dehn twist lattice points intersecting a closed ball of radius $R$ in the Teichm\"{u}ller space is coarsely asymptotic to $e^{\frac{h}{2} R}$. Note that any $\X \in \T_{g,n}$ lies in $\T_{g,n}^\epsilon$ for some $\epsilon$, thus another way to phrase the theorem is by picking $\X,\Y \in \T_{g,n}$ first and then by picking any $\epsilon>0$ such that $\X,\Y \in \T_{g,n}^\epsilon$. The constant $J$ and the above results follow. Recall that in the $\epsilon$-thick part of Teichm\"{u}ller space, there is a uniformly bounded difference, depending on $\epsilon$, between the Thurston metric and Teichm\"{u}ller metric \cite{lenzhen2012bounded}. Thus the above results also hold for the Thurston metric after a slight variation.

Our argument hinges on studying how the length of any simple closed geodesic $\tau$ on a hyperbolic structure $\mathcal{X}$ changes after applying a twist $T_\alpha$. To this end, in Theorem \ref{Hyperbolic Length} we obtain an explicit bound on the length of $\ell_{T_\alpha \mathcal{X}}(\tau)$ in terms of $\ell_{\mathcal{X}}(\tau)$, $\ell_{\mathcal{X}}(\alpha)$ and the intersection patterns between $\tau$ and $\alpha$, up to additive error. We then use this Theorem \ref{Hyperbolic Length}, together with results of Choi, Rafi \cite{choi2007comparison} and Lenzhen, Rafi, Tao \cite{lenzhen2012bounded}, to realize a precise relationship between $\ell_{\mathcal{X}}(\alpha)$ and $d_{\mathcal{T}}(\mathcal{X}, T_\alpha \mathcal{X})$. This relation is stated in the following theorem.

\begin{thmn}[\ref{Coarse Distance Formula}]
	Fix some $S_{g,n}$ and given any $\epsilon > 0$, there exists a constant $H > 0$ such that given any $\X \in \T_{g,n}^\epsilon$, we have
	\begin{align*}
		d_\T(\X, T_\alpha \X) \stackrel{+H}{\asymp} \log\left(\sum_{i=1}^{k} |a_i|\ell_\X^2(\alpha_i)\right)
	\end{align*}
	for any $\alpha = \sum_{i=1}^k a_i \alpha_i \in \ML^*(\mathbb Z)$.
\end{thmn}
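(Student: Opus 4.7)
The plan is to reduce from the Teichmüller to the Thurston (Lipschitz) metric, in which the displacement of $\X$ under $T_\alpha$ is expressed as the logarithm of a length ratio, and then exploit Theorem \ref{Hyperbolic Length} to control that ratio. Since $T_\alpha$ acts on $\T_{g,n}$ by isometries of the Teichmüller metric and preserves the systole function, $T_\alpha\X$ lies in the same $\epsilon$-thick region as $\X$. By Lenzhen--Rafi--Tao \cite{lenzhen2012bounded} the Thurston and Teichmüller distances between thick points agree up to an additive constant $C(\epsilon)$, so it suffices to establish
\[
\log \sup_{\tau \in \mathcal S} \frac{\ell_{T_\alpha \X}(\tau)}{\ell_\X(\tau)} \stackrel{+H'}{\asymp} \log\!\left(\sum_{i=1}^k |a_i|\,\ell_\X^2(\alpha_i)\right),
\]
where $\tau$ ranges over simple closed curves on $S_{g,n}$.

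Next I substitute the estimate from Theorem \ref{Hyperbolic Length}, whose content is (schematically) that $\ell_{T_\alpha \X}(\tau) \stackrel{+H_1}{\asymp} \ell_\X(\tau) + \sum_i |a_i|\, i(\tau,\alpha_i)\,\ell_\X(\alpha_i)$, so that
\[
\frac{\ell_{T_\alpha \X}(\tau)}{\ell_\X(\tau)} \asymp 1 + \frac{1}{\ell_\X(\tau)}\sum_i |a_i|\, i(\tau,\alpha_i)\,\ell_\X(\alpha_i).
\]
For the upper bound I combine this with the standard thick-part intersection estimate $i(\tau,\alpha_i) \le D(\epsilon)\,\ell_\X(\tau)\,\ell_\X(\alpha_i)$, which follows because two consecutive intersections spanned by short arcs on both $\tau$ and $\alpha_i$ would form a short essential loop, contradicting $\epsilon$-thickness of $\X$. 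Dividing by $\ell_\X(\tau)$ and taking logarithms gives
\[
\log\frac{\ell_{T_\alpha \X}(\tau)}{\ell_\X(\tau)} \;\le\; \log\!\left(1 + D(\epsilon)\sum_i |a_i|\,\ell_\X^2(\alpha_i)\right) + O(1)
\]
uniformly in $\tau$, which is the required upper bound.

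For the lower bound I select an index $i_0$ maximizing $|a_{i_0}|\,\ell_\X^2(\alpha_{i_0})$; since $k \le h/2$, this term is within a factor $h/2$ of $\sum_i |a_i|\ell_\X^2(\alpha_i)$. I then exhibit a simple closed geodesic $\tau_0$ with $\ell_\X(\tau_0)$ bounded above and below in terms of $\epsilon$ only, and with $i(\tau_0,\alpha_{i_0}) \ge c(\epsilon)\,\ell_\X(\alpha_{i_0})$; such a $\tau_0$ exists in every $\epsilon$-thick surface because a bounded-length curve transverse to the simple geodesic $\alpha_{i_0}$ must cross it a number of times proportional to $\ell_\X(\alpha_{i_0})$. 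Evaluating the length formula at this specific $\tau_0$ then matches the upper bound up to additive error, yielding the theorem.

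The hard part will be twofold. First, for multicurves of mixed sign one must verify that the individual twist contributions to $\ell_{T_\alpha \X}(\tau)$ do not cancel; this is precisely the role of condition (3) in the definition of $\ML^*$, where $|a_i| \ge 2$ ensures that any partial cancellation between adjacent components is dominated by the net wrapping, and this has to be tracked quantitatively through Theorem \ref{Hyperbolic Length}. Second, the lower-bound construction of $\tau_0$ needs care when several $\alpha_j$ are present so that the intersection with the dominant $\alpha_{i_0}$ is not spoiled by interactions with the other components; this is arranged by localizing $\tau_0$ in a neighborhood of $\alpha_{i_0}$ inside the complementary subsurface of the remaining components of $\alpha$.
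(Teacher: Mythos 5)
The overall architecture of your proposal matches the paper's: reduce the Teichm\"uller displacement to a length-ratio statement in the thick part, push the upper bound through the upper estimate of Theorem \ref{Hyperbolic Length} together with a thick-part intersection bound, and obtain the lower bound by locating a short curve $\kappa$ that intersects the dominant component $\alpha_{i_0}$ (where $|a_{i_0}|\ell_\X^2(\alpha_{i_0})$ is maximal) of order $\ell_\X(\alpha_{i_0})$ times. Your upper bound sketch is essentially sound, and the choice of maximizing index plus a short high-intersection curve is exactly what the paper does (using the short marking $\mu_\X$ and the Lenzhen--Rafi--Tao length formula rather than the collar lemma, but the effect is the same).

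However, there is a genuine gap in the lower bound. You write that Theorem \ref{Hyperbolic Length} gives (schematically) a two-sided additive-error estimate
\begin{align*}
\ell_{T_\alpha\X}(\tau) \stackrel{+H_1}{\asymp} \ell_\X(\tau) + \sum_i |a_i|\, i(\tau,\alpha_i)\,\ell_\X(\alpha_i),
\end{align*}
and you propose to extract the lower bound for mixed-sign multicurves with $|a_i| \ge 2$ by ``tracking quantitatively through Theorem \ref{Hyperbolic Length}.'' But that theorem's lower bound is
\begin{align*}
\ell_{T_\alpha\X}(\tau) \ge \sum_{i=1}^k i(\tau,\alpha_i)\,\big\lfloor (|a_i|-2)\,\ell_\X(\alpha_i) - 2\ell_\X(\tau) - L \big\rfloor_0,
\end{align*}
with the coefficient $|a_i|-2$ and a floor at zero. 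When $|a_i| = 2$ --- which is precisely the boundary case that condition (3) of $\ML^*(\mathbb{Z})$ must accommodate --- every summand is $\lfloor -2\ell_\X(\tau) - L\rfloor_0 = 0$, so Theorem \ref{Hyperbolic Length} gives literally no lower bound. The same issue arises in the positive/negative case when some $|a_i| = 1$. The paper resolves these by using two further tools your proposal does not invoke: the sharper lower bound of Proposition \ref{hyperbolic length not using} (via Ivanov's intersection formula) for same-sign multicurves, and, for mixed sign with $|a_i| \ge 2$, the refinement Proposition \ref{Hyperbolic Length again} together with Lemma \ref{assumption on length and intersection number}, which introduces a parameter $K$ and separates ``effective'' from ``in vain'' intersections to recover an effective coefficient like $|a_i|-1-K$. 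That refinement is the actual technical content of this part of the argument, and it cannot be bypassed by Theorem \ref{Hyperbolic Length} alone. You correctly identify that the mixed-sign case is the hard part, but the tool you propose for it does not close the gap.
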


Moreover, we have constructed a sequence of multicurves in $\ML(\mathbb{Z}) \setminus \ML^*(\mathbb{Z})$ for which Theorem \ref{Coarse Distance Formula} does not hold, see Remark \ref{counterexample to coarse distance formula}. There exists a $H'>0$ depends on $S_{g,n}$ and $\epsilon$, so that for these multicruves the distances behave like
\begin{align*}
	d_\T(\X, T_\alpha \X) \stackrel{+H'}{\asymp} \log\left(\sum_{i=1}^{k} |a_i|\ell_\X(\alpha_i)\right)
\end{align*}
for any $\X \in \T_{g,n}^\epsilon$. This leads to the following question.

\begin{question}
	For $\alpha \in  \ML(\mathbb{Z}) \setminus \ML^*(\mathbb{Z})$, how does the length of any simple closed geodesic $\tau$ on a hyperbolic structure $\mathcal{X}$ changes after applying a twist $T_\alpha$? How far does a point move in Teichm\"{u}ller space after applying the corresponding twist $T_\alpha$?
\end{question}

Another natural question prompted by Theorem \ref{Main Theorem} and Theorem \ref{Coarse Distance Formula} is

\begin{question}
Let $D(\ML(\mathbb{Z}))$ denote the set of all twists. What are the coarse asymptotics for $D(\ML(\mathbb{Z}))$?
\end{question}
Recall that Mirzakhani's Theorem \ref{Counting Formula} says
\begin{align*}
	|\{\alpha \in \ML(\cdot) \mid \ell_X(\alpha) \le e^{\frac{R}{2}}\}| \sim n_X(\cdot) \cdot e^{\frac{h}{2}R},
\end{align*}
which is at the same coarse asymptotic rate of $|D \cdot \Y \cap B_R(\X)|$ for the three cases as in Theorem \ref{Main Theorem} and Corollary \ref{Main Corollary}. Moreover, Mirzakhani \cite{mirzakhani2008growth} also proves that for any $X \in \M_{g,n}$, there exists a constant $n_X$ such that
\begin{align*}
	|\{\alpha \in \ML(\mathbb Z) \mid \ell_X(\alpha) \le e^{\frac{R}{2}}\}| \sim n_X \cdot e^{\frac{h}{2}R}.
\end{align*}
We may wonder whether $|D(\ML(\mathbb{Z})) \cdot \Y \cap B_R(\X)|$ is coarsely asymptotic to $n_X \cdot e^{\frac{h}{2}R}$ as well? This turns out to be false. Namely, we show there is a subset $\ML([\gamma]) \subset \ML(\mathbb Z)$ such that $|D(\ML([\gamma])) \cdot \Y \cap B_R(\X)|$ is at least coarsely asymptotic to $R \cdot e^{\frac{h}{2}R}$, forcing a lower bound for the coarse asymptotic rate of $|D(\ML(\mathbb Z)) \cdot \Y \cap B_R(\X)|$. At the end of section \ref{proof of second main theorem}, we discuss the difficultly using Theorem \ref{Coarse Distance Formula} to obtain an upper bound estimate for the coarse asymptotic rate of $|D(\ML(\mathbb Z)) \cdot \Y \cap B_R(\X)|$.

Let $\underline{\gamma} = \sum_{i=1}^k \gamma_i$ denote a multicurve with all coefficients equal to one and of maximal dimension $k= \frac{h}{2}$. We say $\gamma = \sum_{i=1}^k a_i \gamma_i \in [\underline{\gamma}]$ if $\gamma$ and $\underline{\gamma}$ are the same when without coefficients. Let's denote
\begin{align*}
	\ML\left([\underline{\gamma}]\right) = \bigsqcup_{\gamma \in [\underline{\gamma}]} \ML(\gamma).
\end{align*}
Notice $\ML([\underline{\gamma}])$ consists of infinity many conjugacy classes of multicurves.

\begin{theorem} \label{second main theorem}
	Given any $S_{g,n}$ such that $h > 0$, $\epsilon > 0$, and $\underline{\gamma} = \sum_{i=1}^k \gamma_i$ a multicurve with all coefficients equal to one and of maximal dimension $k= \frac{h}{2}$. There exists a number $f(\underline{\gamma})$ such that, for any $\X, \Y \in \T_{g,n}^\epsilon$, 
	\begin{align*}
	|D(\ML(\mathbb Z)) \cdot \Y \cap B_R(\X)| \ge \left|D\left(\ML([\underline{\gamma}])\right) \cdot \Y \cap B_R(\X)\right| \stackrel{*JF(\X,\Y)}\succeq f(\underline{\gamma}) \cdot R \cdot e^{\frac{h}{2}R}.
	\end{align*}
\end{theorem}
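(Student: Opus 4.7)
The inequality $|D(\ML(\mathbb{Z}))\cdot\Y\cap B_R(\X)|\ge|D(\ML([\underline{\gamma}]))\cdot\Y\cap B_R(\X)|$ is immediate from $\ML([\underline{\gamma}])\subset\ML(\mathbb{Z})$, so the substantive claim is the lower bound on the second term. The plan is to restrict to positive-coefficient multicurves $\alpha=\sum_{i=1}^k a_i\alpha_i$ with $a_i\in\mathbb{Z}_{\ge 1}$, all of which lie in $\ML^*(\mathbb{Z})$ (so Theorem \ref{Coarse Distance Formula} applies), parameterize them by a pair consisting of a ``base'' tuple $(\alpha_1,\ldots,\alpha_k)\in\ML(\underline{\gamma})$ and a coefficient vector $(a_1,\ldots,a_k)$, and exploit a cancellation between the number of base tuples at length scale $2^n$ and the number of admissible coefficient vectors per base.

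By the triangle inequality $d_\T(\X,T_\alpha\Y)\le d_\T(\X,T_\alpha\X)+d_\T(\X,\Y)$ combined with Theorem \ref{Coarse Distance Formula}, the condition $T_\alpha\Y\in B_R(\X)$ is implied by $\sum_{i=1}^k a_i\ell_\X^2(\alpha_i)\le c\,e^R/e^{d_\T(\X,\Y)}$ for some $c=c(\epsilon,H)>0$; denote the right-hand side by $L^2$. For each integer $n\ge n_1$, where $n_1$ is chosen large enough that Mirzakhani's asymptotic is in force, set
\[
\mathcal{A}_n=\bigl\{(\alpha_1,\ldots,\alpha_k)\in\ML(\underline{\gamma}):2^n\le\ell_\X(\alpha_i)\le 2^{n+1}\text{ for all }i\bigr\}.
\]
Mirzakhani's counting theorem, applied to the box region $[2^n,2^{n+1}]^k\subset\mathbb{R}_{>0}^k$ (via $\Mod_{g,n}$-equidistribution of $\ML(\underline{\gamma})$ against Thurston measure), yields $|\mathcal{A}_n|\ge c_1(\underline{\gamma},\X)\cdot 2^{nh}$ uniformly in $n\ge n_1$. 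For each base tuple in $\mathcal{A}_n$, a simplex lattice-point bound gives at least $c_2\cdot L^{2k}/\prod\ell_\X^2(\alpha_i)\ge c_3\cdot e^{hR/2}/(F(\X,\Y)\cdot 2^{nh})$ positive integer coefficient vectors satisfying the length constraint, using $2k=h$ and $L^{2k}=c^k e^{hR/2}/F(\X,\Y)$; this estimate is valid whenever $2^{n+1}\le L/\sqrt{k}$, i.e.\ whenever $n\le R/(2\log 2)+O(1)$.

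Multiplying, each admissible dyadic scale contributes at least $c_4\,e^{hR/2}/F(\X,\Y)$ distinct multi-twists landing in $B_R(\X)$, independent of $n$. Summing over the $\asymp R$ admissible values of $n$ produces $|D(\ML([\underline{\gamma}]))\cdot\Y\cap B_R(\X)|\ge f(\underline{\gamma})\cdot R\cdot e^{hR/2}/F(\X,\Y)$ for an explicit $f(\underline{\gamma})>0$; absorbing the global constant into $J$ yields the coarse asymptotic $\stackrel{*JF(\X,\Y)}{\succeq}f(\underline{\gamma})\cdot R\cdot e^{hR/2}$ as claimed.

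The principal technical obstacle is establishing the uniform lower bound $|\mathcal{A}_n|\ge c_1\cdot 2^{nh}$ across dyadic scales. Mirzakhani's counting theorem (\ref{Counting Formula}), as cited, bounds the total number of multicurves of topological type $\underline{\gamma}$ with total length at most $L$, whereas the argument above requires a count in a box region in the $k$-dimensional length space of $\ML(\underline{\gamma})$ with a constant uniform in the scaling parameter $n$. This generalization follows from the usual $\Mod_{g,n}$-equidistribution results against Thurston measure, but care is needed to ensure both that $|\mathcal{A}_n|$ does not degenerate for intermediate $n$ and that the simplex lattice-point count remains valid near the upper end of the $n$-range where $L/\ell_\X(\alpha_i)$ becomes comparable to $k$. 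Absorbing the resulting constants into $f(\underline{\gamma})$ and $J$ then completes the argument.
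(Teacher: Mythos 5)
Your argument takes a genuinely different route from the paper's. Where you organize the count by dyadic length scales --- fixing the component lengths of the base tuple $(\alpha_1,\dots,\alpha_k)$ to lie in $[2^n,2^{n+1}]$ and then counting admissible coefficient vectors per base --- the paper instead organizes by the coefficient sum $s=\sum a_i$. It shows the number of topological types in $[\underline{\gamma}]$ with coefficient sum $s$ and roughly balanced weights grows like $s^{k-1}$, applies Mirzakhani's counting formula to the single scaled multicurve $s\cdot\underline{\gamma}$ via Corollary \ref{nr} (so only the total-length form of the theorem is needed), and then extracts the factor of $R$ from the harmonic sum $\sum_s 1/s$ over the admissible range of $s$. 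Both decompositions are designed to make the per-scale contribution independent of the decomposition parameter so that the number of admissible parameter values supplies the extra factor of $R$, and both restrict to multicurves in $\ML^*(\mathbb Z)$ so that Theorem \ref{Coarse Distance Formula} applies.

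However, your proposal has a substantive gap, which you partially flag yourself: the uniform lower bound $|\mathcal{A}_n|\ge c_1\cdot 2^{nh}$ is \emph{not} a consequence of Theorem \ref{Counting Formula} as stated. That theorem counts multicurves of type $\underline{\gamma}$ whose \emph{total} length is at most $L$; it gives no control over the joint distribution of the $k$ component lengths, and so cannot by itself rule out the length vectors avoiding the cube $[2^n,2^{n+1}]^k$ at some scales $n$. Supplying this would require a ``box-region'' refinement of Mirzakhani's counting --- in effect pushing her equidistribution-to-Thurston-measure result forward under $\alpha\mapsto(\ell_\X(\alpha_1),\dots,\ell_\X(\alpha_k))$ and showing the pushforward assigns uniformly positive mass to all dyadic cubes --- which is strictly stronger than anything cited or used in the paper. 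This is not an absorbed-into-constants technicality; it is the engine of your cancellation, and the argument does not stand without it. The paper's coefficient-sum decomposition is built precisely so that no such refinement is needed: the comparison $s_X(L,\gamma)\ge s_X(L,s\cdot\underline{\gamma})$ for $\gamma\in[\underline{\gamma},s,s/t]$ from Corollary \ref{gamma and underline gamma}, together with the elementary rescaling identity $s_X(L,s\cdot\underline{\gamma})=s_X(L/s,\underline{\gamma})$, reduces everything to the one-parameter count covered by Theorem \ref{Counting Formula}. Until you either prove the box count or reroute around it, your argument does not establish the theorem.
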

In particular, we can consider the case $\frac{h}{2}=3g-3+n=1$, where $S_{g,n}$ is either $S_{1,1}$ or $S_{0,4}$, and $\Mod_{g,n}, \T_{g,n}$ are $\text{SL}_2(\mathbb{Z}), \mathbb{H}^2$ respectively. In this case, $\ML(\mathbb{Z})$ is one dimensional and we have $\ML(\mathbb{Z}) =\ML([\underline{\gamma}])$ for any simple closed curve $\underline{\gamma}$. In correspondence $D(\ML(\mathbb{Z}))$ is the set of all parabolic elements of $\text{SL}_2(\mathbb{Z})$. There are many results about the asymptotic growth of lattice points in $\mathbb{H}^2$, see \cite{Huber1956berEN}, \cite{parkkonen2015hyperbolic} for example. The corollary below can also be interpreted as a coarse asymptotic for the number of parabolic lattice points of $\text{SL}_2(\mathbb{Z})$ intersecting a closed ball of radius $R$ in $\mathbb{H}^2$.
\begin{corollary} \label{second main corollary}
	Given $S_{g,n}$ equal to $S_{1,1}$ or $S_{0,4}$ and given any $\epsilon > 0$.  For any $\X, \Y \in \T_{g,n}^\epsilon$, we have
	\begin{align*}
	\left| D \left(\mathcal ML(\mathbb{Z})\right) \cdot \Y \cap B_R(\X) \right| \stackrel{*4JF(\X,\Y)}{\sim} n_X(\mathcal S) \cdot R \cdot e^{R}.
	\end{align*}
\end{corollary}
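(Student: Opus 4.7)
The plan is to exploit that when $\frac{h}{2}=1$, every multicurve in $\ML(\mathbb Z)$ is of the form $a\alpha_0$ for a simple closed curve $\alpha_0$ and a nonzero integer $a$, and every twist is a power $T_{\alpha_0}^a$. I will reduce the lattice point count to a weighted sum against Mirzakhani's counting function for simple closed geodesics (Theorem \ref{Counting Formula}).

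First, apply Theorem \ref{Coarse Distance Formula} to $\alpha = a\alpha_0$, obtaining $d_\T(\X, T_{\alpha_0}^a \X) \stackrel{+H}{\asymp} \log(|a|\ell_\X^2(\alpha_0))$. Combined with the triangle inequality and the isometry $d_\T(T_{\alpha_0}^a \X, T_{\alpha_0}^a \Y) = d_\T(\X, \Y)$, the condition $T_{\alpha_0}^a \Y \in B_R(\X)$ is squeezed between the sufficient condition $|a|\ell_\X^2(\alpha_0) \le e^R/C$ and the necessary condition $|a|\ell_\X^2(\alpha_0) \le C\,e^R$, where $C = e^H F(\X,\Y)$ (using $F(\X,\Y) = e^{d_\T(\X,\Y)}$ for $h/2 = 1$). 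For fixed $\alpha_0$, the number of admissible nonzero integers $a$ is then $2\lfloor Me^R/\ell_\X^2(\alpha_0)\rfloor$, with $M$ one of the two squeeze values.

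Next, evaluate the resulting weighted sum $\sum_{\alpha_0 \in \mathcal S,\, \ell_\X(\alpha_0)^2 \le Me^R} 2Me^R/\ell_\X^2(\alpha_0)$ by partial summation. Setting $N(L) = |\{\alpha_0 \in \mathcal S : \ell_\X(\alpha_0) \le L\}|$, Theorem \ref{Counting Formula} gives $N(L) \sim n_X(\mathcal S) L^2$, and Riemann--Stieltjes integration (with the lower cutoff coming from the thick-part assumption $\ell_\X(\alpha_0) \ge \epsilon$) yields $\sum_{\ell_\X(\alpha_0) \le L} \ell_\X^{-2}(\alpha_0) \sim 2n_X(\mathcal S)\log L$. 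Substituting $L = \sqrt{Me^R}$ so that $\log L = R/2 + O(1)$, the full lattice count is squeezed between expressions of the form $2Mn_X(\mathcal S)Re^R$ with $M = e^{\pm H}F(\X,\Y)^{\pm 1}$, yielding the desired coarse asymptotic.

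The main obstacle will be the final constant bookkeeping: matching the stated coarse coefficient $4JF(\X,\Y)$ requires tracking $H$, $J$, and the $o(1)$ errors in both Theorem \ref{Counting Formula} and the partial summation. For the lower bound one may alternatively invoke Theorem \ref{second main theorem} applied to each topological type $\underline{\gamma}$ of simple closed curve (one type on $S_{1,1}$, three on $S_{0,4}$); the contributions come from disjoint conjugacy families and sum to $\succeq \bigl(\sum_{[\underline{\gamma}]} f(\underline{\gamma})\bigr)Re^R$, which should be reconciled with $n_X(\mathcal S) R e^R$ up to the coarse coefficient. Care must also be taken to confirm that the finite summation over topological types in $S_{0,4}$ does not inflate the coefficient beyond the stated $4$.
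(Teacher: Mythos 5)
Your proposal is correct in spirit, but the route is genuinely different from the paper's, so the comparison is worth spelling out. Both proofs first use the coarse distance formula to translate the lattice count into a count of pairs $(\alpha_0, a)$ with $|a|\,\ell_\X^2(\alpha_0) \le Ce^R$ where $C = e^{\pm H}F(\X,\Y)^{\pm 1}$. From there the proofs diverge. The paper sums over the power $a$ first: it reuses the sets $M_R^\pm$ and $\mathcal S_{R,n}^\pm$ from the proof of Corollary \ref{Main Corollary}, applies Mirzakhani's theorem to each $\mathcal S_{R,n}^+$ with $n$ in the admissible range, bounds the harmonic sum $\sum_{n\le b}\tfrac1n \le \log(b+1)\le R$ outright, and treats the tail $n>b$ by the same trivial estimate as before; the coefficient $4$ comes from the $2$ in $M_R^\pm = 2\sum_n|\mathcal S_{R,n}^\pm|$ combined with $2\lambda n_X(\mathcal S) Re^{R+H}$ absorbing both the main sum and the tail. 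The lower bound in the paper is not re-derived at all but is imported wholesale from Theorem \ref{second main theorem} using $\ML(\mathbb Z)=\ML([\underline\gamma])$ and $f(\underline\gamma)=n_X(\mathcal S)$. You instead sum over the curve $\alpha_0$ first, reducing to $\sum_{\ell_\X(\alpha_0)\le L}\ell_\X^{-2}(\alpha_0)$ and evaluating it by Riemann--Stieltjes integration against $N(L)\sim n_X(\mathcal S)L^2$, getting $\sim 2n_X(\mathcal S)\log L$; this is a self-contained argument that handles upper and lower bound symmetrically. Both yield the stated result. Your approach avoids the split into a main range and a tail, but it requires more care with the partial summation error terms (the boundary term $N(L)/L^2$ and the small-$L$ portion of $\int N(x)x^{-3}\,dx$), which you flag but do not carry out; the constants do work out (the leading coefficient you produce is $2JF$ on the upper side and $2/(JF)$ on the lower, both inside the $4JF$ window once $\lambda$-slack is spent).

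One small correction: under the paper's conventions, $\Mod_{0,4}$ is defined to \emph{permute} the punctures (homeomorphisms preserve the \emph{set} of punctures), so $S_{0,4}$ has a single topological type of essential simple closed curve, not three. The paper's identity $\ML(\mathbb Z)=\ML([\underline\gamma])$ therefore holds for a single $\underline\gamma$ on both $S_{1,1}$ and $S_{0,4}$; there is no need to sum contributions from multiple conjugacy families in the lower bound, and no risk of inflating the constant from that direction.
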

The upper bound in this Corollary follows from an alternation of the proof of Corollary \ref{Main Corollary}, see section \ref{Proof of the Main Theorem}, and the lower bound follows from previous Theorem \ref{second main theorem}, see section \ref{proof of second main theorem}.

We conclude this introduction with two more questions for further study.
\begin{question}
		For a set of twists $D$ with known coarse asymptotics, we may next ask for more precise asymptotics, i.e., what is the best coarse asymptotic coefficient $J$ we can achieve? 
\end{question}

\begin{question}\label{pA}
	In Theorem \hyperref[Main Theorem]{1.4} we are essentially looking at the growth behavior of the orbit of a conjugacy class of twists. What about the asymptotics growth behavior of other conjugacy classes in $\Mod_{g,n}$?
\end{question}
We explore Questions \ref{pA} for pseudo-Anosov conjugacy classes in the following paper \cite{pApaper}.

The organization of the paper is as follows. 
In section \ref{Background} we briefly review some background and previous results.
In section \ref{The effect of twist on hyperbolic length} we study how twists affect the lengths of simple closed curves and prove Theorem \ref{Hyperbolic Length}.
In section \ref{Coarse distance}, we prove Theorem \ref{Coarse Distance Formula} estimating how far a twist translates a point in Teichm\"uller space.
In section \ref{Proof of the Main Theorem}, we prove our main results Theorem \ref{Main Theorem} and Corollary \ref{Main Corollary} using Mirzakhani's result of counting simple closed geodesics \cite{mirzakhani2008growth}. 
In section \ref{proof of second main theorem}, we prove Theorem \ref{second main theorem} and Corollary \ref{second main corollary} quickly follows. 
We remark that Theorem \ref{Hyperbolic Length} and Theorem \ref{Coarse Distance Formula} are key ingredients in our argument and may be of independent interest.

\subsection{Acknowledgments}
I would like to thank my advisor, Spencer Dowdall, for suggesting this project, for his guidance and consistent support throughout. This paper would not exist without his help. I am also grateful to James Farre, Bin Sun, Zach Tripp for heplful conversations and comments. Finally, I would like to thank to Ayelet Lindenstrauss and Kevin Pilgrim for inspiring me to explore my interest in mathematics.

\section{Background} \label{Background}
We refer the reader to \cite{farb2011primer} for more background materials.

\subsection{Mapping Class Group and Dehn twists}\label{Mapping Class Group and Dehn twists}
Let $\text{Homeo}^+_{g,n}$ denote the group of all the orientation-preserving homeomorphisms of $S_{g,n}$ preserving the set of punctures, and let $\text{Homeo}^0_{g,n}$ denote the connected component of the identity. The mapping class group of $S_{g,n}$ is defined to be the group of isotopy classes of orientation-preserving homeomorphisms:
\begin{align*}
\Mod_{g,n} = \text{Homeo}^+_{g,n}/ \text{Homeo}^0_{g,n} = \text{Homeo}^+_{g,n}/\ \text{isotopy} 
\end{align*}

Let $A = S^1 \times [0,1]$ be an oriented annulus, the twist map $T\colon A \to A$ is defined to be $(\theta,t) \mapsto (\theta +2\pi t,t)$, so $T$ is a homeomorphism of $A$ relative to its boundary. Let $a$ be a representative of a simple closed curve $\alpha$ on $S_{g,n}$ and let $N$ be a regular neighborhood of $a$. Pick some orientation-preserving homeomorphism $\phi\colon A \to N$, the Dehn twist about $a$ is defined by
\begin{align*}
T_a(x) =
\begin{cases}
\phi \circ T \circ \phi^{-1}(x) \ \text{if}\ x \in N \\
x \ \text{if}\ x \in S_{g,n} \setminus N 
\end{cases}
.
\end{align*}
The isotopy class of $T_a$ does not depend on choice of $a$ in $\alpha$. Thus $T_\alpha$ is an well-defined mapping class. Now given any multicurve $\alpha = \sum_{i=1}^k a_i \alpha_i$, the composition $T_\alpha = \prod_{i=1}^{k} T_{\alpha_i}^{a_i}$ is called a multi-twist.

Given two simple closed curves $\alpha,\beta$, the intersection number $i(\alpha, \beta)$ is defined to be $i(\alpha, \beta) = \min |a \cap b|$ where $a,b$ are in the isotopy classes $\alpha ,\beta$ respectively and $|a \cap b|$ denotes how many times $a$ and $b$ intersect. The following proposition of Ivanov shows how twists effect intersection numbers.

\begin{proposition}[Intersection Formula \cite{ivanov1992subgroups}]\label{Intersection Formula}
Let $\alpha=\sum_{i=1}^k a_i \alpha_i$ be a multicurve on $S_{g,n}$, and $T_\alpha = \prod_{i=1}^{k} T_{\alpha_i}^{a_i}$ the corresponding twist. Given $\beta, \gamma$ arbitrary simple closed curves on $S_{g,n}$. If $\alpha$ is positive or negative, we have
\begin{align}\label{intersection formula}
\left |i(T_\alpha(\beta), \gamma) - \sum_{i=1}^n |k_i|i(\alpha_i,  \beta)i(\alpha_i, \gamma) \right | \le i(\beta, \gamma).
\end{align}
If $\alpha$ is of mixed sign, we have
\begin{align}\label{signed intersection formula}
&\sum_{i=1}^n (|k_i|-2)i(\alpha_i,  \beta)i(\alpha_i, \gamma) - i(\beta, \gamma) \\ \nonumber
&\le i(T_\alpha(\beta), \gamma) \\ \nonumber
&\le \sum_{i=1}^n |k_i|i(\alpha_i,  \beta)i(\alpha_i, \gamma) + i(\beta, \gamma).
\end{align}
\end{proposition}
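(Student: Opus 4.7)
The plan is to reduce everything to the single-twist case and then leverage the fact that, since the $\alpha_i$ are pairwise disjoint, the individual twists $T_{\alpha_i}^{a_i}$ commute and each $T_{\alpha_i}^{a_i}$ fixes every other $\alpha_j$ up to isotopy. In particular, $i\bigl(\alpha_j,\, T_{\alpha_{i_1}}^{a_{i_1}} \cdots T_{\alpha_{i_m}}^{a_{i_m}}(\beta)\bigr) = i(\alpha_j,\beta)$ for any $j \notin \{i_1,\ldots,i_m\}$, which lets intersection numbers with the $\alpha_j$ be pulled out of compositions.

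First I would establish the single-curve version: for any simple closed curve $\delta$ and any simple closed curves $\beta,\gamma$,
\[
\bigl| i(T_\delta^k \beta,\gamma) - |k|\, i(\delta,\beta)\, i(\delta,\gamma)\bigr| \le i(\beta,\gamma).
\]
Fix geodesic representatives in minimal pairwise position on some hyperbolic structure, together with a thin embedded annular neighborhood $N$ of $\delta$ so that $\beta \cap N$ consists of $i(\delta,\beta)$ properly embedded essential arcs and similarly for $\gamma$. Realize $T_\delta^k$ as the identity outside $N$ and as a $k$-fold twist inside $N$. Outside $N$ the image coincides with $\beta$ and contributes at most $i(\beta,\gamma)$ intersections with $\gamma$; inside $N$ each of the $i(\delta,\beta)$ twisted arcs crosses each of the $i(\delta,\gamma)$ arcs of $\gamma \cap N$ exactly $|k|$ times, giving $|k|\, i(\delta,\beta)\, i(\delta,\gamma)$ intersections. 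This gives the upper bound. For the lower bound, note that inside $N$ the configuration is already in minimal position, so any bigon between $T_\delta^k\beta$ and $\gamma$ must involve an arc outside $N$; each such bigon can be charged to an original intersection point of $\beta \cap \gamma$, bounding the total correction by $i(\beta,\gamma)$.

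Next I would bootstrap to the same-sign multi-twist case. Take disjoint annular neighborhoods $N_1,\ldots,N_k$ of the $\alpha_i$ and realize $T_\alpha$ as an $a_i$-fold twist inside each $N_i$. The arc-counting argument of Step 1 applies in parallel: outside $\bigcup N_i$ the image of $\beta$ is unchanged and contributes at most $i(\beta,\gamma)$ intersections, while inside each $N_i$ we get exactly $|a_i|\, i(\alpha_i,\beta)\, i(\alpha_i,\gamma)$ intersections. When all coefficients share a sign, the wrapping arcs in distinct $N_i$ rotate consistently and no bigons can form between arcs of different annuli, so the local minimal-position count is in fact global up to the baseline $i(\beta,\gamma)$ error. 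This yields \eqref{intersection formula}.

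Finally, for the mixed-sign case, the upper bound follows from exactly the same tally since the gross number of crossings produced by a twist depends only on $|a_i|$. The weaker lower bound reflects a new source of bigons that appears when $a_i$ and $a_j$ have opposite signs: wrapping arcs across $N_i$ and across $N_j$ can now be co-oriented along a connecting arc of $\beta$ in a way that bounds a bigon with $\gamma$. A local analysis in pairs $(N_i,N_j)$ connected by a subarc of $\beta$ shows that for each component $\alpha_i$ of mixed-sign type, at most $2\, i(\alpha_i,\beta)\, i(\alpha_i,\gamma)$ intersections can be eliminated this way, which produces the $(|a_i|-2)$ coefficient in \eqref{signed intersection formula}. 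I expect the main obstacle to be precisely this bigon bookkeeping: controlling how many bigons can simultaneously appear across different pairs $(N_i,N_j)$ without overcounting, and verifying that the per-component correction saturates at $2\, i(\alpha_i,\beta)\, i(\alpha_i,\gamma)$. I would handle this by tracking isotopy classes of the arcs of $T_\alpha(\beta) \setminus \bigcup N_i$ and invoking the bigon criterion, following Ivanov's template in \cite{ivanov1992subgroups}.
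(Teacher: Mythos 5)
The paper does not actually prove this proposition; it is quoted verbatim from Ivanov \cite{ivanov1992subgroups} and used as a black box, so there is no internal argument to compare you against. I will therefore assess your sketch on its own terms.

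Your single-twist step is the standard annulus argument (it appears, for instance, in the Primer), and it is fine. But be careful about your opening framing, ``reduce everything to the single-twist case and then leverage \dots commutativity.'' Iterating the one-curve inequality
\[
\bigl| i(T_\delta^k\beta,\gamma) - |k|\,i(\delta,\beta)\,i(\delta,\gamma)\bigr| \le i(\beta,\gamma)
\]
one component at a time, using $i(\alpha_j, T_{\alpha_i}^{a_i}\beta)=i(\alpha_j,\beta)$, does give the upper bound $\sum_i |a_i|\,i(\alpha_i,\beta)\,i(\alpha_i,\gamma)+i(\beta,\gamma)$ by a clean induction, but it does \emph{not} give any useful lower bound: at each stage the error term becomes $i(T_{\alpha_1}^{a_1}\!\cdots T_{\alpha_{j}}^{a_j}\beta,\gamma)$, which you can only bound from above, so the lower bound degenerates. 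You evidently notice this, since your actual argument is the simultaneous-annuli version rather than the iterative one; it would be better to say so up front.

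The genuine mathematical content of Ivanov's lemma is exactly what you flag as the obstacle: (i) the claim that in the same-sign case no bigon can involve arcs from two distinct annuli $N_i, N_j$, so the local minimal-position count survives globally up to the baseline $i(\beta,\gamma)$ error; and (ii) in the mixed-sign case, that the loss from cross-annulus bigons is at most $2\,i(\alpha_i,\beta)\,i(\alpha_i,\gamma)$ per component. Your sketch asserts (i) on the grounds that ``wrapping arcs in distinct $N_i$ rotate consistently,'' but this is the crux and is not justified; a bigon could a priori pass through the outside region and re-enter, and you must rule that out by a careful orientation/index argument (this is where Ivanov's actual proof does real work, via geodesic representatives and a counting argument in $\mathbb{H}^2$). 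Likewise (ii) is stated as an expectation with the charging scheme left open. So the proposal has the right architecture but the two nontrivial steps remain unverified, as you candidly acknowledge. One minor nit: the paper's displayed formula uses $k_i$ and $\sum_{i=1}^{n}$, a typo for the multicurve coefficients $a_i$ and $\sum_{i=1}^{k}$; you correctly read through this.
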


\subsection{Teichm\"{u}ller space and moduli space}
A hyperbolic structure $\X$ on $S_{g,n}$ is a pair $(X, \phi)$ where $\phi\colon S_{g,n} \to X$ is a homeomorphism and $X$ is a hyperbolic surface. We say two hyperbolic structures $\X = (X, \phi), \Y = (Y, \psi)$ are isotopic if there is an isometry $I\colon X \to Y$ isotopic to $\psi \circ \phi^{-1}$.   The Teichm\"{u}ller space $\T_{g,n}$ is the set of hyperbolic structures on $S_{g,n}$ modulo isotopy. We let $\X = (X, \phi), \Y = (Y, \psi)$ denote elements in $\T_{g,n}$.

Given any $\X, \Y \in \T_{g,n}$, the Teichm\"{u}ller distance between them is defined to be
\begin{align*}
d_\T(\X, \Y) = \frac{1}{2} \inf_{f \sim \phi \circ \psi^{-1}}\log(K_f)
\end{align*}
where the infimum is over all quasi-conformal homeomorphisms $f$ isotopic to $\phi \circ \psi^{-1}$ and $K_f$ is the quasi-conformal dilatation of $f$. Equipped with the Teichm\"{u}ller metric, the Teichm\"{u}ller space is a complete, unique geodesic metric space.

The mapping class group acts isometrically on $\T_{g,n}$ by changing the marking $(f, (X,\phi)) \mapsto (X,\phi \circ f^{-1})$. This action is properly discontinuous but not cocompact. The quotient $\M_{g,n} = \T_{g,n} / \Mod_{g,n}$ is called the moduli space, and it is a non-compact orbifold parameterizing hyperbolic surfaces homeomorphic to $S_{g,n}$.

Given any $\X = (X, \phi) \in \T_{g,n}$ and given any isotopy class $\gamma$ of nontrivial simple closed curves on $X$, there exists a unique geodesic in this free homotopy class. We define the length function on $X$ by setting $\ell_{X}(\gamma)$ equal to the length of this unique geodesic. We also let $\ell_{\X}(\alpha)$ denote $\ell_{X}(\phi(\alpha))$ for any simple closed curve $\alpha$ on $S_{g,n}$. For any multicurve $\alpha = \sum_{i=1}^k a_i\alpha_i$, we define $\ell_{\X}(\alpha) = \sum_{i=1}^k |a_i|\ell_{\X}(\alpha_i)$ to be its length.

A pair of pants is a closed surface of zero genus with three boundary components or punctures. A pants decomposition $\Gamma$ of the surface $S_{g,n}$ is a collection of pairwise disjoint non-trivial simple closed curves $\gamma_1, \cdots, \gamma_{3g-3+n}$ on $S_{g,n}$, together they decompose the surface $S_{g,n}$ into $2g+n-2$ pairs of pants. Using pants decomposition and by introducing Fenchel-Nielsen coordinates, Fricke \cite{FrickeKlein} showed that the dimension of $\T_{g,n}$ is $6g+2n-6$.

A theorem of Bers \cite{Bers1985} says there exists a constant depending only on $S_{g,n}$ such that for every $\X \in \T_{g,n}$, there is a pants decomposition $\Gamma_\X$ of $\X$ in which each simple closed curve has length bounded above by this Bers' constant.

Given any $\epsilon > 0$, the $\epsilon$-thick part of Teichm\"{u}ller space is defined to be
\begin{align*}
\T^\epsilon_{g,n} = \{\X \in \T_{g,n} \mid \ell_\X(\alpha) \ge \epsilon \text{\ for any simple closed curve\ } \alpha \text{\ on\ } S_{g,n}\}
\end{align*}
and consequently the $\epsilon$-thick part of moduli space is $\M^\epsilon_{g,n} = \T_{g,n}^\epsilon / \Mod_{g,n}$. The Mumford compactness criterion \cite{10.2307/2037802} says $\M^\epsilon_{g,n}$ is compact for any $\epsilon >0$.
\subsection{Short Marking}\label{Short Marking}
For any $\X \in \T_{g,n}$, a short marking \cite{choi2007comparison} $\mu_\X$ is a collection of simple closed curves $\{\eta_i\}_{i=1}^{3g-3+n} \cup \{\delta_i\}_{i=1}^{3g-3+n}$ on $S_{g,n}$ picked in the following way: First, choose a pant decomposition $\{\eta_i\}_{i=1}^{3g-3+n}$ by taking a curve $\eta_1$ on $S_{g,n}$ that is a shortest curve with respect to $\X$, and then a next shortest disjoint curve from the first, and so on until we complete a pants decomposition. Next, for each $\eta_i$, pick a shortest curve $\delta_{i}$ that intersects $\eta_{i}$ and is disjoint from all other pants curves. For each $i$, we say $\eta_{i}, \delta_{i}$ is a pair. The collection of curves obtained in this way has the property that
any two curves have intersection number bounded by $2$. Note there could be a finite number of possible short markings corresponding to each $\X \in \T_{g,n}$, we fix one such short marking and call it the short marking $\mu_\X$. Moreover, given any $\epsilon > 0$, by Bers' Theorem and trigonometry, there exists $N > 0$ depending on $\epsilon$ and $S_{g,n}$ such that for any $\X \in \T_{g,n}^\epsilon$, all curves in the short marking $\mu_\X$ have length bounded above by $N$ and bounded below by $\epsilon$. 

We recall a result from Choi and Rafi \cite{choi2007comparison} stating that for any $\epsilon > 0$, the Teichm\"{u}ller distance in the $\epsilon$-thick part can be approximated by the maximum ratio of change of lengths of the short marking.

\begin{theorem}[Distance Formula \cite{choi2007comparison}]\label{Distance Formula}
	For any $\epsilon > 0$, there exists $c > 0$ depending on $S_{g,n}$ and $\epsilon$ such that for any $\X, \Y \in \T_{g,n}^\epsilon$
	\begin{align*}
	d_\T(\X, \Y) \stackrel{+c}{\asymp} \log\max_{\gamma \in \mu_ \X}\frac{\ell_\Y(\gamma)}{\ell_\X(\gamma)}.
	\end{align*}
\end{theorem}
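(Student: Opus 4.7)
The plan is to prove the two inequalities packaged in $\stackrel{+c}{\asymp}$ separately, writing $M = \log\max_{\gamma \in \mu_\X} \ell_\Y(\gamma)/\ell_\X(\gamma)$. The central observation I will exploit throughout is that, by the $\epsilon$-thick hypothesis and the Bers-type bound mentioned in section \ref{Short Marking}, every curve $\gamma \in \mu_\X$ has hyperbolic length confined to a fixed interval $[\epsilon, N]$ on $\X$, so by Maskit's comparison its extremal length $\mathrm{Ext}_\X(\gamma)$ is likewise pinched between constants $c_1, c_2$ depending only on $\epsilon$ and $S_{g,n}$.

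For the upper bound $M \le d_\T(\X,\Y) + c$, I would invoke Kerckhoff's formula
\begin{align*}
d_\T(\X,\Y) \;=\; \frac{1}{2}\log \sup_{\gamma} \frac{\mathrm{Ext}_\Y(\gamma)}{\mathrm{Ext}_\X(\gamma)},
\end{align*}
which immediately yields $\mathrm{Ext}_\Y(\gamma) \le c_2 \cdot e^{2 d_\T(\X,\Y)}$ for every $\gamma \in \mu_\X$. The universal Cauchy--Schwarz inequality $\ell_\Y(\gamma)^2 \le \mathrm{Area}(\Y)\cdot \mathrm{Ext}_\Y(\gamma) = 2\pi|\chi(S_{g,n})|\cdot \mathrm{Ext}_\Y(\gamma)$, obtained by using the hyperbolic metric as an admissible conformal metric, then converts this into $\ell_\Y(\gamma) \le C e^{d_\T(\X,\Y)}$, and dividing by $\ell_\X(\gamma) \ge \epsilon$ finishes this direction with $c = \log(C/\epsilon)$.

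For the lower bound $d_\T(\X,\Y) \le M + c$, my goal is to construct a quasi-conformal homeomorphism $f\colon X \to Y$ in the correct isotopy class with dilatation $K_f \le C e^{2M}$, giving $d_\T \le \frac{1}{2}\log K_f \le M + \frac{1}{2}\log C$. Cut both surfaces along the pants decomposition $\{\eta_i\} \subset \mu_\X$: the boundary-length ratios $\ell_\Y(\eta_i)/\ell_\X(\eta_i)$ are all at most $e^M$, and two hyperbolic pairs of pants whose boundary lengths agree up to such a ratio admit a standard quasi-conformal model map of controlled dilatation. The dual curves $\delta_i \in \mu_\X$ then control the twist discrepancy: because the $\epsilon$-thickness pins $\ell_\X(\eta_i) \ge \epsilon$, the length $\ell_\Y(\delta_i)$ grows essentially linearly (with slope $\asymp \ell_\X(\eta_i)$) in the Fenchel--Nielsen twist difference $|t_i^\Y - t_i^\X|$, so the hypothesis $\ell_\Y(\delta_i) \le e^M \ell_\X(\delta_i)$ forces this twist difference to be at most $O(e^M)$. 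Absorbing the residual twist in annular collars of the $\eta_i$ by a bounded-modulus Dehn-twist map contributes at most another factor of $e^{2M}$ to the dilatation, and piecing together gives the advertised $f$.

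The hard part will be this lower bound, and specifically the twist-control step. The dependence of $\ell_\Y(\delta_i)$ on the boundary lengths and twists of the two pants adjacent to $\eta_i$ is genuinely coupled, so turning the hypothesis on $\ell_\Y(\delta_i)$ into a clean estimate on twist parameters requires invoking the collar lemma together with the asymptotic $\ell(\delta) \sim |t|\ell(\eta)/2$ for large twists, and then verifying that all implied constants are uniform across $\X,\Y \in \T_{g,n}^\epsilon$. Once this quantitative twist bound is in hand, the quasi-conformal model maps on pants and the collar gluing are fairly routine.
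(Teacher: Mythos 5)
This theorem is imported from Choi--Rafi \cite{choi2007comparison} and the paper gives no proof of it, so there is no in-paper argument to compare your attempt against; I will instead assess your proposal on its own terms and against the published route.

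Your upper-bound argument is correct and in fact is the right way to get the additive (rather than multiplicative-by-two) comparison: Wolpert's lemma alone would only give $\log\max \ell_\Y/\ell_\X \le 2d_\T$, whereas routing through Kerckhoff's extremal-length formula and the admissible-metric inequality $\ell_\Y(\gamma)^2 \le 2\pi|\chi|\,\mathrm{Ext}_\Y(\gamma)$ halves the exponent because $\ell_\Y$ appears squared. The thick-part pinching of $\mathrm{Ext}_\X(\gamma)$ for $\gamma\in\mu_\X$ via Maskit is exactly what is needed, and all constants depend only on $\epsilon$ and $S_{g,n}$. This part is complete.

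Your lower bound is a sketch of the Minsky product-region argument, and the plan is essentially sound, but you have left the crux unresolved (and you say so). Two points deserve care beyond what you write. First, the pants pieces themselves are not free: two hyperbolic pairs of pants whose boundary lengths differ by a ratio $\le e^M$, with all lengths in $[\epsilon, e^M N]$, admit a model quasi-conformal map only with dilatation on the order of $e^{2M}$ (the extremal lengths of boundary curves scale quadratically in hyperbolic length), not $e^M$; fortunately this still gives $d_\T \le M + O(1)$ after taking $\tfrac12\log$. Second, the statement that $\ell_\Y(\delta_i)$ grows linearly in the Fenchel--Nielsen twist should use $\ell_\Y(\eta_i)$ (the length where the twisting happens) rather than $\ell_\X(\eta_i)$ as the slope; since both lie in $[\epsilon, e^M N]$ this does not change the conclusion $|t_i^\Y - t_i^\X| = O(e^M)$, but it matters for getting the constants uniform. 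Finally, to justify gluing you need the boundary maps of adjacent pants pieces to agree, which is precisely where the twist-absorbing collar maps enter; you should state clearly that the dilatations of the pants maps and the collar maps are taken on disjoint regions so the total dilatation is their maximum, $\lesssim e^{2M}$. As a route, this direct quasi-conformal construction is genuinely different from the published Choi--Rafi argument, which proceeds by first comparing $d_\T$ to Thurston's Lipschitz metric in the thick part and then expressing the Lipschitz ratio via short markings (using Rafi's combinatorial model). Both routes work; yours is more self-contained but requires verifying the pants and collar dilatation estimates explicitly, which the Lipschitz-metric route sidesteps.
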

We also recall that Lenzhen, Rafi, Tao \cite{lenzhen2012bounded} showed that for any simple closed curve on $S_{g,n}$, its length with respect to $\X$ can be estimated via its intersection pattern with the short marking $\mu_\X$.
\begin{proposition}[Length Formula \cite{lenzhen2012bounded}]\label{Length Formula}
	There exists $C \ge 1$ depending on $S_{g,n}$ such that for any simple closed curve $\beta$ on $S_{g,n}$ and for any $\X \in \T_{g,n}$, we have
	\begin{align*}
	\ell_\X(\beta) \stackrel{*C}{\asymp} \sum_{\gamma \in \mu_\X} i(\beta, \gamma) \ell_\X(\bar{\gamma})
	\end{align*}
	where $\bar{\gamma}$ denotes the curve in the short marking paired with $\gamma$. 
\end{proposition}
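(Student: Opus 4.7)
My plan is to prove the two directions of the coarse equivalence separately, by decomposing a representative of $\beta$ along its pattern of intersections with the curves in the short marking $\mu_\X$. Throughout I would use the Margulis constant $\epsilon_0$ to distinguish thick and thin portions of $\X$, and would use the structure of $\mu_\X$ established in Section \ref{Short Marking}: the pants curves $\{\eta_i\}$ have length bounded by Bers' constant, and each transversal $\delta_i$ is chosen shortest among curves meeting $\eta_i$ and disjoint from the others.

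For the upper bound $\ell_\X(\beta) \le C \sum_{\gamma \in \mu_\X} i(\beta,\gamma)\ell_\X(\bar\gamma)$, I would construct an explicit piecewise-geodesic representative of $\beta$ realizing the correct intersection numbers and then bound its length from above. For each pants curve $\eta_i$ include $i(\beta,\eta_i)$ arcs transversely crossing the Margulis collar of $\eta_i$; each such arc has length bounded by twice the collar width, which is coarsely $\ell_\X(\delta_i)=\ell_\X(\bar{\eta_i})$. For each intersection with the transversal $\delta_i$ include one wrapping arc around $\eta_i$ of length $\ell_\X(\eta_i)=\ell_\X(\bar{\delta_i})$. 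Outside these collars, the pants pieces cut out by $\{\eta_i\}$ have uniformly bounded geometry (the boundary lengths are bounded above and the thin parts have been removed), so connecting arcs across any pants contribute a bounded amount per endpoint. Passing to the geodesic representative can only decrease length, giving the upper bound.

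For the lower bound $\ell_\X(\beta) \ge \frac{1}{C} \sum_{\gamma \in \mu_\X} i(\beta,\gamma)\ell_\X(\bar\gamma)$, the idea is that each intersection of the geodesic $\beta$ with a thin pants curve $\eta_i$ must traverse the standard Margulis collar of $\eta_i$, contributing length at least the collar width, which is coarsely $\ell_\X(\delta_i)$. The Margulis collars around distinct short $\eta_i$ are pairwise disjoint, so these contributions sum cleanly to $\sum_i i(\beta,\eta_i)\ell_\X(\delta_i)$ up to a constant. For a thick $\eta_i$ the quantity $\ell_\X(\bar{\eta_i})$ is bounded above by a universal constant, and each intersection of $\beta$ with $\eta_i$ contributes at least a uniformly positive length (from an embedded annular neighborhood in the thick locus), so the contribution is again at least a constant times $i(\beta,\eta_i)\ell_\X(\bar{\eta_i})$. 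A symmetric argument handles intersections of $\beta$ with the transversals $\delta_i$: wraps inside the collar of $\eta_i$ each contribute length $\ell_\X(\eta_i)=\ell_\X(\bar{\delta_i})$ and each corresponds to one intersection with $\delta_i$.

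The main obstacle will be securing a uniform multiplicative constant $C$ depending only on the topology of $S_{g,n}$, since the theorem is claimed without any thickness assumption. This requires reconciling the collar-based estimate in the thin regime with the bounded-geometry estimate in the thick regime at the Margulis threshold, and verifying that arcs of $\beta$ wrapping many times inside a single thin collar are correctly accounted for on both sides — the number of wraps equals the intersection with $\delta_i$, and the total wrapping length is this many copies of $\ell_\X(\eta_i)=\ell_\X(\bar{\delta_i})$, matching the term $i(\beta,\delta_i)\ell_\X(\bar{\delta_i})$ in the sum. One also has to verify that arcs of $\beta$ passing through chains of nearby thin collars do not double-count length, which follows from the disjointness of the Margulis collars.
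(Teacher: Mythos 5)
This Proposition is not proved in the paper at all; the author cites it directly from Lenzhen--Rafi--Tao \cite{lenzhen2012bounded} and uses it as a black box (together with the Choi--Rafi distance formula) in the proof of Theorem \ref{Coarse Distance Formula}. So there is no internal proof here to compare your attempt against. Your outline is broadly the right kind of argument for such a statement — thick--thin decomposition, collar estimates for arcs crossing pants curves, circumference estimates for arcs winding in collars — and is in the spirit of the arguments in \cite{lenzhen2012bounded} and related work of Rafi, but as written it is a sketch with a few genuine gaps rather than a proof.

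Concretely: (a) The step ``collar width of $\eta_i$ is coarsely $\ell_\X(\delta_i)$'' is the crux, not a remark. You need to establish $\ell_\X(\delta_i)\asymp W(\eta_i)+O(1)$ uniformly, which requires knowing that the $\epsilon_0$-thick part of each pair of pants in the Bers decomposition has uniformly bounded diameter (bounded area $+$ bounded injectivity radius $+$ bounded boundary length) so that the ``bounded amount per endpoint'' for connecting arcs actually is bounded, and the seam lengths do not blow up when adjacent pants curves are thin; none of this is argued. (b) The claim that each wrap inside the collar of $\eta_i$ ``corresponds to one intersection with $\delta_i$'' is not literally correct: $i(\beta,\delta_i)$ measures the relative twisting of $\beta$ and $\delta_i$ about $\eta_i$, weighted by $i(\beta,\eta_i)$, and you must separately invoke that the shortest transversal $\delta_i$ itself has coarsely zero twist about $\eta_i$; without that, the winding length of $\beta$ cannot be read off from $i(\beta,\delta_i)$. (c) You explicitly acknowledge that reconciling the thin-collar estimate with the thick estimate at the Margulis threshold is ``the main obstacle'' but do not resolve it, so the uniform constant $C$ (which is claimed to depend only on $S_{g,n}$, with no thickness hypothesis) is not actually produced. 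In its current form this is a plausible plan, not a complete proof; the cited LRT argument handles these points via a careful analysis of the annular covers and the twisting coordinates.
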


For a fixed $\epsilon$, any curve $\gamma$ in $\mu_\X, \X \in \T_{g,n}^\epsilon$ satisfies $\epsilon \le \ell_\X(\gamma) \le N$. We can therefore rewrite the above theorem and proposition for $\T_{g,n}^\epsilon$.

\begin{lemma}
	For any $S_{g,n}$ and $\epsilon > 0$, there exists $C$ depends on $S_{g,n}$ and $c, N$ depends on $S_{g,n}$ and $\epsilon$ such that
	\begin{align}\label{distance formula}
		\log \left( \frac{1}{Ne^c} \max_{\gamma \in \mu_\X}\ell_\Y(\gamma) \right) \le d_\T(\X,\Y) \le \log \left( \frac{e^c}{\epsilon} \max_{\gamma \in \mu_\X}\ell_\Y(\gamma) \right)
	\end{align}
	\begin{align}\label{length formula}
	\frac{\epsilon}{C} \sum_{\gamma \in \mu_\X} i(\beta, \gamma) \le \ell_\X(\beta) \le CN\sum_{\gamma \in \mu_\X} i(\beta, \gamma)
	\end{align}
\end{lemma}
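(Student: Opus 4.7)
The plan is to derive both inequalities directly from Theorem \ref{Distance Formula} (Choi--Rafi) and Proposition \ref{Length Formula} (Lenzhen--Rafi--Tao) by substituting in the uniform bounds $\epsilon \le \ell_\X(\gamma) \le N$ that hold for every curve $\gamma$ in the short marking $\mu_\X$ whenever $\X \in \T_{g,n}^\epsilon$. This length bound is already recorded at the end of section \ref{Short Marking} as a consequence of Bers' Theorem and hyperbolic trigonometry, so it may be quoted without further work.

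For inequality (\ref{distance formula}), I would unpack the additive coarse equality of Theorem \ref{Distance Formula} as the two-sided bound
\[ \log\max_{\gamma \in \mu_\X} \frac{\ell_\Y(\gamma)}{\ell_\X(\gamma)} - c \;\le\; d_\T(\X,\Y) \;\le\; \log\max_{\gamma \in \mu_\X} \frac{\ell_\Y(\gamma)}{\ell_\X(\gamma)} + c. \]
For the stated upper bound, replace $\ell_\X(\gamma)$ in the denominator by its lower bound $\epsilon$ (which only enlarges each ratio, hence enlarges the maximum), and absorb the $+c$ into the logarithm as $\log e^c$, giving $d_\T(\X,\Y) \le \log\bigl(\tfrac{e^c}{\epsilon} \max_\gamma \ell_\Y(\gamma)\bigr)$. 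For the lower bound, replace $\ell_\X(\gamma)$ by its upper bound $N$ (which only shrinks each ratio) and rewrite $-c = -\log e^c$, giving $d_\T(\X,\Y) \ge \log\bigl(\tfrac{1}{Ne^c}\max_\gamma \ell_\Y(\gamma)\bigr)$.

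For inequality (\ref{length formula}), the same strategy applies to Proposition \ref{Length Formula}: writing the multiplicative coarse equality as $\tfrac{1}{C}\sum i(\beta,\gamma)\ell_\X(\bar\gamma) \le \ell_\X(\beta) \le C\sum i(\beta,\gamma)\ell_\X(\bar\gamma)$, and using that each $\ell_\X(\bar\gamma) \in [\epsilon, N]$, the central sum is sandwiched between $\epsilon \sum i(\beta,\gamma)$ and $N \sum i(\beta,\gamma)$. Substituting yields the desired inequalities $\tfrac{\epsilon}{C}\sum i(\beta,\gamma) \le \ell_\X(\beta) \le CN \sum i(\beta,\gamma)$.

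Since both inequalities follow immediately from the cited results together with an already-established length bound on short markings in the thick part, there is no genuine obstacle to overcome. The lemma is essentially a bookkeeping reformulation of Theorem \ref{Distance Formula} and Proposition \ref{Length Formula} that records the explicit dependence of the constants on $S_{g,n}$ and $\epsilon$ in a form convenient for later sections, where the additive and multiplicative errors must be tracked carefully while proving Theorem \ref{Hyperbolic Length}, Theorem \ref{Coarse Distance Formula}, and ultimately Theorem \ref{Main Theorem}.
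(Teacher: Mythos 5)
Your proposal is correct and follows exactly the route the paper intends: the paper gives no explicit proof for this lemma, merely noting that since $\epsilon \le \ell_\X(\gamma) \le N$ for all $\gamma \in \mu_\X$ when $\X \in \T_{g,n}^\epsilon$, one may "rewrite the above theorem and proposition" for the thick part. Your substitutions into the unpacked two-sided bounds of Theorem \ref{Distance Formula} and Proposition \ref{Length Formula} are precisely this rewriting.
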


\begin{figure}[h]
	\begin{center}
		\begin{tikzpicture}
		\node[anchor=south west,inner sep=0] at (0,0) {\includegraphics[scale=0.4]{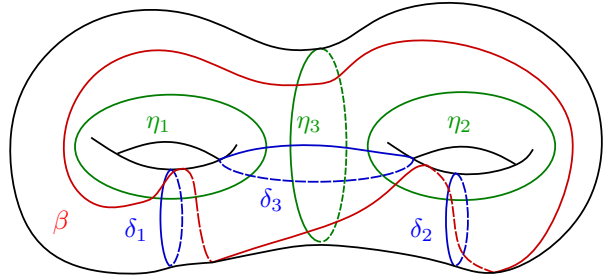}};
		\node at (2, 2){\textcolor{OliveGreen}{$\eta_1$}};
		\node at (6, 2){\textcolor{OliveGreen}{$\eta_2$}};
		\node at (4, 2){\textcolor{OliveGreen}{$\eta_3$}};
		\node at (1.7, .6){\textcolor{blue}{$\delta_1$}};
		\node at (5.5, .6){\textcolor{blue}{$\delta_2$}};
		\node at (3.5, 1){\textcolor{blue}{$\delta_3$}};
		\node at (.7, .7){\textcolor{red}{$\beta$}};
		\end{tikzpicture}	
	\end{center}
	\caption{A short marking $\mu_\X$ and a simple closed curve $\beta$ on a hyperbolic surface $X$ homeomorphic to $S_2$.}
	\label{pic1}
\end{figure}

\subsection{Properties of $\mathbb{H}^2$} \label{upper half plane properties}

We first recall the following useful lemma in hyperbolic geometry.
\begin{lemma}[Collar Lemma \cite{farb2011primer}] \label{Collar Lemma}
	For any simple closed geodesic $\gamma$ of length $\ell$ on a hyperbolic surface, it is contained in an embedded cylinder of diameter of order $\ell^{-1}$, and the diameter is
	\begin{align*}
		W\left(\gamma\right) = \sinh^{-1}\left(\frac{1}{\sinh(\frac{1}{2}\ell)}\right)
	\end{align*}
\end{lemma}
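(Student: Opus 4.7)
The plan is to prove the Collar Lemma by lifting to the universal cover $\mathbb{H}^2$ and computing the maximal width of an equidistant neighborhood of the axis of the deck transformation corresponding to $\gamma$ that embeds downstairs. Normalize coordinates so that a chosen lift $\tilde{\gamma}\subset\mathbb{H}^2$ is the imaginary axis and the deck transformation $T\in\pi_1(S)$ corresponding to $\gamma$ acts as the hyperbolic isometry $z\mapsto e^{\ell}z$. For any candidate width $W>0$, the equidistant strip
\begin{align*}
N_W(\tilde{\gamma})=\{z\in\mathbb{H}^2:d(z,\tilde{\gamma})\le W\}
\end{align*}
is invariant under $\langle T\rangle$, and the quotient $N_W(\tilde{\gamma})/\langle T\rangle$ is an immersed annulus of hyperbolic width $2W$ whose core is $\gamma$.

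The question reduces to determining for which $W$ this annulus is embedded in the quotient surface. This fails exactly when there is some $g\in\pi_1(S)\setminus\langle T\rangle$ with $N_W(\tilde{\gamma})\cap g\cdot N_W(\tilde{\gamma})\neq\emptyset$, equivalently $d(\tilde{\gamma},g\tilde{\gamma})<2W$. The translate $g\tilde{\gamma}$ is the axis of the conjugate $gTg^{-1}$, which is hyperbolic of the same translation length $\ell$. Crucially, because $\gamma$ is a \emph{simple} closed geodesic, any two distinct lifts of $\gamma$ are disjoint geodesics in $\mathbb{H}^2$, so $\tilde{\gamma}$ and $g\tilde{\gamma}$ admit a unique common perpendicular of some finite length $d$.

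Next I would exploit discreteness of $\langle T,gTg^{-1}\rangle\subset\mathrm{PSL}_2(\mathbb{R})$ to derive the sharp trigonometric inequality
\begin{align*}
\sinh\!\bigl(d/2\bigr)\sinh\!\bigl(\ell/2\bigr)\ge 1.
\end{align*}
The cleanest route is to build a right-angled hexagon (or its quarter, a right-angled pentagon) having $\tilde{\gamma}$, $g\tilde{\gamma}$, and their common perpendicular among its sides, then apply the standard hyperbolic hexagon identity; an alternative is to translate the axis $g\tilde{\gamma}$ back to the half of $\tilde{\gamma}$ fundamental for $T$ and invoke the Fuchsian analogue of Shimizu's lemma. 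Either approach forces the inequality above with equality precisely at the critical width. Setting $W=\sinh^{-1}(1/\sinh(\ell/2))$ then gives $2W\le d$ for every such $g$, so $N_W(\tilde{\gamma})$ is disjoint from all its non-trivial translates and projects to an embedded annulus of hyperbolic width $2W$ on the quotient surface. The asymptotic statement that the width is of order $\ell^{-1}$ follows immediately by expanding $\sinh^{-1}(1/\sinh(\ell/2))\sim 2/\ell$ as $\ell\to 0$.

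The main obstacle is the rigorous derivation of the trigonometric inequality in the previous paragraph: one must carefully set up the correct geometric configuration in $\mathbb{H}^2$ and verify that simplicity of $\gamma$, combined with discreteness of the Fuchsian group, is exactly what rules out the violating configurations. Once that algebraic/trigonometric step is in place, the remainder of the argument is a clean embedding check in the universal cover.
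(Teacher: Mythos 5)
The paper does not actually prove this lemma; it simply cites \cite{farb2011primer}, so there is no in-text argument of the paper to compare against. That said, your universal-cover strategy is a legitimate and standard route (essentially Buser's), and the reduction is set up correctly: you normalize $\tilde\gamma$ to the imaginary axis with $T\colon z\mapsto e^\ell z$, correctly observe that the half-width-$W$ collar embeds iff $d(\tilde\gamma,g\tilde\gamma)\ge 2W$ for every $g\in\pi_1(S)\setminus\langle T\rangle$, and correctly use simplicity of $\gamma$ (and primitivity, so that $\mathrm{Stab}(\tilde\gamma)=\langle T\rangle$) to conclude that $g\tilde\gamma$ is another, disjoint lift.

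The genuine gap is precisely the step you flag: the inequality $\sinh(d/2)\sinh(\ell/2)\ge 1$ is asserted, not derived, and the derivation is not routine. If you run the most direct computation — take $g\tilde\gamma$ to be the semicircle with endpoints $0<a<b$, impose disjointness of $g\tilde\gamma$ from $Tg\tilde\gamma$ (which forces $b/a\le e^{\ell}$), and compute $\sinh d = 2\sqrt{ab}/(b-a)$ — you only get $\sinh(d)\sinh(\ell/2)\ge 1$, not $\sinh(d/2)\sinh(\ell/2)\ge 1$. That is weaker by a factor of $2$ in the collar half-width. So the ``Shimizu-type/discreteness'' bound you allude to does not drop out of a single disjointness-of-translates constraint; obtaining the sharp constant requires projecting the perpendicular to a simple arc on the surface and running the right-angled hexagon/pentagon trigonometry in the cut-open (pair-of-pants) picture, as in Buser Theorem 4.1.1 or the Farb--Margalit treatment, rather than reasoning about a single pair of translated axes in $\mathbb{H}^2$.

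Two mitigating observations. First, the weaker bound $d\ge\sinh^{-1}(1/\sinh(\ell/2))$ still produces an embedded collar whose total width is $\sinh^{-1}(1/\sinh(\ell/2))$; since the paper states the lemma with this quantity as the \emph{diameter} of the cylinder and, more importantly, only ever uses the existence of some collar width $W(\tau)$ bounded below in terms of $\epsilon$ (in Proposition \ref{Hyperbolic Length again} and Lemma \ref{assumption on length and intersection number}), the weaker constant is entirely sufficient for the paper. Second, your small-$\ell$ asymptotic is also off: $\sinh^{-1}(1/\sinh(\ell/2))\sim\log(4/\ell)$, not $2/\ell$, as $\ell\to 0$ (and the paper's own phrase ``of order $\ell^{-1}$'' is likewise imprecise). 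You should either carry out the hexagon computation to justify the sharp factor, or explicitly settle for the disjoint-translates bound and note that it yields a collar of half the advertised half-width, which suffices.
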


For any two closed sets $A,B \subset \mathbb{H}^2$ we let $d(A,B)$ denote the minimal distance between them. For any geodesic $\eta$ in $\mathbb{H}^2$, we let $\pi_\eta$ denote the closest point projection map, namely
\begin{align*}
\pi_\eta(x) = \{y \in \eta \mid d(x,y) = d(x,\eta)\}.
\end{align*} 
For any two points $x, y \in \mathbb{H}^2$, we let $[x,y]$ denote the unique geodesic connecting them. Given two points $x,y \in \mathbb{H}^2$ separated by a bi-infinite geodesic $\eta$ and far away from $\eta$, we let $x_\eta \in \mathbb{H}^2$ denote the first point that the geodesic $[x,y]$ enters the $L$-neighborhood of $\eta$ coming from the $x$ side. Similarly, we can define $y_\eta \in \mathbb{H}^2$. If $x$ is in the $L$-neighborhood of $\eta$ to begin with, we just let $x_\eta = x$, and similarly for $y$.

Being a hyperbolic space, geodesics are strongly contracting in $\mathbb{H}^2$, see \cite{Arzhantseva_2015} for example. That is, there exists a constant $L$ such that for any geodesic $\eta$ and for any geodesic $\alpha$ that never enters the $L$-neighborhood of $\eta$, the diameter of $\pi_\eta(\alpha)$ is bounded by $L$. As a consequence, we have
\begin{corollary}\label{Crossing Fellow Travel}
There exists a constant $L$ such that for any bi-infinite geodesic $\eta$ in $\mathbb{H}^2$ and for any two points $x,y$ separated by $\eta$, we have
	\begin{align*}
	d(x_\eta, \pi_\eta(x)) \le 2L, d(y_\eta, \pi_\eta(y)) \le 2L.
	\end{align*}
\end{corollary}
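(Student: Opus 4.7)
The plan is to apply the strongly contracting property of geodesics in $\mathbb{H}^2$ (recalled just before the corollary) to the initial sub-segment of $[x,y]$ running from $x$ up to the first $L$-neighborhood entry point $x_\eta$, and then bridge to $\pi_\eta(x)$ via the triangle inequality. The analogous argument for the other side gives the bound for $y_\eta$.

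First I would treat the nontrivial case where $x$ lies outside the $L$-neighborhood of $\eta$, so that $x_\eta$ is a genuine first entry point. By the very definition of $x_\eta$, the geodesic sub-segment $[x, x_\eta]$ avoids the open $L$-neighborhood of $\eta$. The strongly contracting property then yields
\begin{align*}
\diam\, \pi_\eta\bigl([x, x_\eta]\bigr) \le L,
\end{align*}
and in particular $d(\pi_\eta(x), \pi_\eta(x_\eta)) \le L$. On the other hand, $x_\eta$ sits on the boundary of the $L$-neighborhood of $\eta$, so $d(x_\eta, \pi_\eta(x_\eta)) \le L$. Combining these two estimates by the triangle inequality gives
\begin{align*}
d(x_\eta, \pi_\eta(x)) \le d(x_\eta, \pi_\eta(x_\eta)) + d(\pi_\eta(x_\eta), \pi_\eta(x)) \le L + L = 2L,
\end{align*}
which is the desired bound. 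If instead $x$ already lies inside the $L$-neighborhood of $\eta$, then by convention $x_\eta = x$, and $d(x_\eta, \pi_\eta(x)) = d(x, \pi_\eta(x)) \le L \le 2L$ is automatic. The argument for $d(y_\eta, \pi_\eta(y)) \le 2L$ is identical by symmetry, using the terminal sub-segment $[y_\eta, y]$ of $[x,y]$.

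There is no substantive obstacle here; the only point that requires care is to confirm that $[x, x_\eta]$ really does avoid the open $L$-neighborhood of $\eta$, which is precisely what ``first entry point'' encodes. The hypothesis that $x$ and $y$ are separated by $\eta$ ensures that $[x,y]$ actually crosses $\eta$, so both $x_\eta$ and $y_\eta$ are well-defined and the argument applies on both sides.
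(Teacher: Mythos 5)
Your argument is exactly the paper's: the paper justifies the corollary with the single line $d(x_\eta, \pi_\eta(x)) \le d(x_\eta, \pi_\eta(x_\eta)) + d(\pi_\eta(x_\eta), \pi_\eta(x)) \le 2L$, which is precisely the triangle-inequality split you use, with the two $L$ bounds coming from the strongly contracting property applied to $[x,x_\eta]$ and from $x_\eta$ lying on the boundary of the $L$-neighborhood. You spell out the edge case where $x$ starts inside the neighborhood, which the paper leaves implicit, but the route is the same.
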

This is because we have $d(x_\eta, \pi_\eta(x)) \le d(x_\eta, \pi_\eta(x_\eta)) + d(\pi_\eta(x_\eta), \pi_\eta(x)) \le 2L$. Similarly for $y_\eta$.

Another important property of the projection map in $\mathbb{H}^2$ is that it's $1$-Lipschitz. Viewing in the upper half plane model and up to isometry, we may assume $\eta$ is the vertical line $x=0$. For each point $(0,r) \in \eta$, the points projecting to $(0,r)$ are exactly the Euclidean semicircles of radius $r$ centered at $(0,0)$. Given two Euclidean semicircle centered at $(0,0)$, the minimal distance between them are realized by the points intersecting the vertical line $x=0$. This means
\begin{lemma}\label{1-Lip}
$\pi_\eta$ is $1$-Lipschitz for any bi-infinite geodesic $\eta$ in $\mathbb{H}^2$.
\end{lemma}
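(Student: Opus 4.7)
The plan is to follow the strategy sketched in the paragraph immediately preceding the statement: work in the upper half-plane model with $\eta$ equal to the imaginary axis, identify the fibers of $\pi_\eta$ as the Euclidean semicircles centered at the origin, and reduce the $1$-Lipschitz bound to the standard distance computation between two ultraparallel geodesics.

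First, using the transitivity of $\text{Isom}(\mathbb{H}^2)$ on bi-infinite geodesics, I would assume without loss of generality that $\eta$ is the imaginary axis. The fiber $\pi_\eta^{-1}((0,r))$ is then precisely the hyperbolic geodesic through $(0,r)$ perpendicular to $\eta$, which in this model is the Euclidean semicircle $C_r = \{(x,y) \in \mathbb{H}^2 : x^2 + y^2 = r^2\}$. In particular each $C_r$ is itself a bi-infinite geodesic of $\mathbb{H}^2$ with ideal endpoints $\pm r \in \partial \mathbb{H}^2$.

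The core estimate, for $p \in C_{r_1}$ and $q \in C_{r_2}$ with $r_1 \neq r_2$, is
\[ d(p,q) \;\ge\; d\bigl(C_{r_1}, C_{r_2}\bigr) \;=\; d\bigl((0,r_1),(0,r_2)\bigr) \;=\; d\bigl(\pi_\eta(p), \pi_\eta(q)\bigr). \]
The middle equality carries all the geometric content. I would deduce it from the standard fact that two disjoint geodesics of $\mathbb{H}^2$ with disjoint ideal endpoints admit a unique common perpendicular along which their minimum distance is attained. Here $\{\pm r_1\}$ and $\{\pm r_2\}$ are disjoint since $r_1 \neq r_2$, and $\eta$ meets both $C_{r_1}$ and $C_{r_2}$ orthogonally at their Euclidean tops; thus $\eta$ is the common perpendicular and the minimum is realized by $(0,r_1)$ and $(0,r_2)$.

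The only point that needs care is the realization of the common perpendicular. A self-contained alternative uses the reflection $(x,y) \mapsto (-x,y)$, which is an isometry of $\mathbb{H}^2$ fixing $\eta$ pointwise and each $C_r$ setwise: convexity of the function $d(\,\cdot\,, C_{r_1})$ along the geodesic $C_{r_2}$, together with this symmetry, forces its minimum on $C_{r_2}$ to be attained at the unique reflection-fixed point $(0,r_2)$, which likewise projects to $(0,r_1)$. The case $r_1 = r_2$ is immediate, since then $\pi_\eta(p) = \pi_\eta(q)$ and the inequality is trivial.
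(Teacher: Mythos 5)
Your proof is correct and takes essentially the same approach as the paper's (which is precisely the terse paragraph preceding the lemma: pass to the upper half-plane with $\eta$ the imaginary axis, identify the fibers as concentric Euclidean semicircles, and observe that the minimal distance between two of them is realized on $\eta$). You simply supply the justification the paper asserts without proof — that $\eta$ is the common perpendicular of the ultraparallel geodesics $C_{r_1}, C_{r_2}$, or equivalently that reflection symmetry together with convexity of the distance function pins the minimizer to $\eta$.
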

\subsection{Lifts of twists} \label{Lifts of twists}
Given an oriented bi-infinite geodesic $\beta$ in $\mathbb{H}^2$ and a number $l_\beta \in \mathbb{R}$, we can decompose $\mathbb{H}^2$ into two open pieces, one to the left of $\beta$ and one to the right of $\beta$, and then regule the two pieces along $\beta$ after translation according to $l_\beta$. When $l_\beta$ is positive, we regule the pieces along $\beta$ after translating distance $|l_\beta|$ to the left. When $l_\beta$ is negative, we regule the pieces along $\beta$ after translating distance $|l_\beta|$ to the right. This process is called shearing along $\beta$ according to $l_\beta$, see \cite{kerckhoff1980} for more detail. We are mainly interested in what happens to geodesics after shearing. Let $\tau$ be a bi-infinite geodesic in $\mathbb{H}^2$ transverse to $\beta$ and let $\tau'$ be the image of $\tau$ after shearing along $\beta$ according to $l_\beta$, then $\tau'$ is a concatenation of two geodesic rays with a sub-segment of $\beta$ of length $l_\beta$ connecting these two rays' starting points, see Figure \ref{pic2} for an illustration.

\begin{figure}[h]
	\begin{center}
		\begin{tikzpicture}
		\node[anchor=south west,inner sep=0] at (0,0) {\includegraphics[scale=0.25]{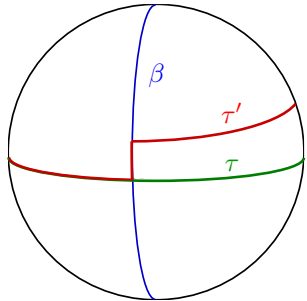}};
		\node at (2, 3){\textcolor{blue}{$\beta$}};
		\node at (3, 1.8){\textcolor{OliveGreen}{$\tau$}};
		\node at (3, 2.5){\textcolor{red}{$\tau'$}};
		\end{tikzpicture}	
	\end{center}
	\caption{After shearing along $\beta$ according to $l_\beta$, $\tau$ becomes $\tau'$.}
	\label{pic2}
\end{figure}

Given $\X = (X, \phi) \in \T_{g,n}$ and let $p\colon \mathbb{H}^2 \to X$ be the universal cover. For any multicurve $\alpha = \sum_{i=1}^k a_i \alpha_i$, we let $\A = \{\alpha_i\}_{i=1}^k$ and let $\tilde{\A}$ denote the set of lifts of curves in $\A$.  For each curve $\tilde{\alpha} \in \tilde{\A}$, we let $\alpha_{s(\tilde{\alpha})}$ denote the curve such that $\tilde{\alpha}$ is a lift of $\alpha_{s(\tilde{\alpha})}$. Note the complements of $\cup_{\tilde{\alpha} \in \tilde{\A}} \tilde{\alpha}$ are infinitely many open regions. Fixing one of these regions, we can shear along all these bi-infinite geodesics in $\tilde{\A}$ according to $a_{s(\tilde{a})} \ell_\X(\alpha_{s(\tilde{\alpha})})$, and this is called shearing according to $\alpha$.

Now, given any simple closed geodesic $\beta$ on $\X$, we let $\tau$ be a lift of $\beta$ with a base point $q_0 \in \tau$. Fixing the region containing $q_0$, we can shear according to $\alpha$. Let $\tau'$ denote the image of $\tau$ after shearing, then the projection of $\tau'$ is isotopic to the simple closed geodesic $T_\alpha(\beta)$. Let $q_L', q_R' \in \partial \mathbb{H}^2$ denote the endpoints of $\tau'$. The two end points $q_L', q_R' \in \partial \mathbb{H}^2$ define a unique bi-infinite geodesic $\sigma$ in $\mathbb{H}^2$ and $\sigma$ is in the same isotopy class of $\tau'$, see Figure \ref{pic3}. This means $\sigma$ is a lift of the simple closed geodesic $p(\sigma) = T_\alpha(\beta)$. Similarly, one can obtain the simple closed geodesic $T_\alpha^{-1}(\beta)$ by shearing in the opposite direction.

\begin{figure}[h]
	\begin{center}
		\begin{tikzpicture}
		\node[anchor=south west,inner sep=0] at (0,0) {\includegraphics[scale=0.35]{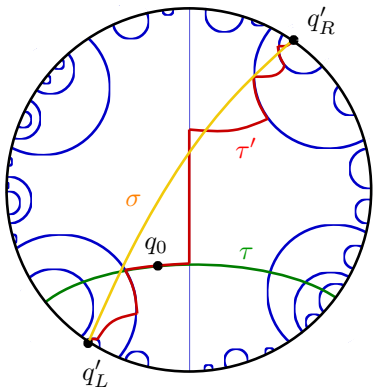}};
		\node at (3.5, 1.6){\textcolor{OliveGreen}{$\tau$}};
		\node at (3.5, 3){\textcolor{red}{$\tau'$}};
		\node at (2, 2.3){\textcolor{orange}{$\sigma$}};
		\node at (2.3, 1.7){$q_0$};
		\node at (4.5, 4.7){$q'_R$};
		\node at (1.5, 0){$q'_L$};
		\end{tikzpicture}	
	\end{center}
	\caption{After shearing according to $\alpha$ (blue curves are in $\tilde{A}$), the geodesic $\tau$ becomes $\tau'$, and the geodesic $\sigma$ is uniquely defined by the endpoints of $\tau'$}
	\label{pic3}
\end{figure}

\subsection{Bass-Serre Tree} \label{Bass-Serre Tree}
We briefly explain how to construct a Bass-Serre tree dual to an infinite collection of bi-infinite geodesics in $\mathbb{H}^2$ that arise from a covering map. In particular, one may imagine how to construct a Bass-Serre dual to the Figure \ref{pic3}. See \cite{scott1979topological} for more detail about Bass-Serre tress in general.

Let $p \colon \mathbb{H}^2 \to S_{g,n}$ be a universal cover. Given $\A = \{\alpha_i\}_{i=1}^n$ a collection of disjoint simple closed curves on $S_{g,n}$, we let $\tilde{\A}$ denote the set of all liftings of curves in $\A$ to $\mathbb{H}^2$, and we let $\cup \tilde{\A}$ denote the union of all elements in $\tilde{\A}$. Define $\Z_\A$ to be the tree dual to $\tilde{\A}$ in $\mathbb{H}^2$. That is, $\Z_\A = (V_\A, E_\A)$ is a graph such that each vertex in $V_\A$ corresponds to a connected component in $\mathbb{H}^2 \setminus \cup \tilde{\A}$ and each edge is dual to an element in $\tilde{\A}$. We label each edge by the element in $\tilde{\A}$ that it is dual to.

Denote the connected component corresponding to a vertex $v$ as $C(v)$. Given two vertices $v,w \in V_\A$, $(v,w) \in E_\A$ if and only if $C(v),C(w)$ represent bordered connected components. Denote $d_\Z$ the metric on the tree $\Z_\A$ where the length of each edge has length 1, $(\Z_\A, d_\Z)$ is a unique geodesic metric space. 

By the Collar Lemma \ref{Collar Lemma}, there exists a $r = \min\{W(\alpha_i)\}_{i=1}^n$ sufficiently small such that for any curve $\alpha \in \A$, $\mathcal{N}_r(\alpha)$ is an open annulus. We can define a $\pi_1$-equivariant, continuous and surjective map $\phi_\A\colon \mathbb{H}^2 \to \mathcal{Z}_\A$ such that each $\mathcal{N}_r(\tilde{\alpha})$ maps to an edge and each connected component in $\mathbb{H}^2 \setminus \cup_{\tilde{\alpha} \in \tilde{\A}} \N_r(\tilde{\alpha})$ gets mapped to a vertex.

Now, given any simple closed curve $\tau$ on $S_{g,n}$ and let $\tilde{\tau}$ be a lift of $\tau$ in $\mathbb{H}^2$. If $\tau$ does not intersect any curve in $\A$, then $\phi_\A(\tilde{\tau})$ is a vertex. Otherwise, denote
\begin{align*}
     i(\tau, \A) = \sum_{i=1}^n i(\tau, \alpha_i)
\end{align*}
the intersection number of $\tau$ with curves in $\A$, $\phi_\A(\tilde{\tau})$ is a bi-infinite geodesic in $(\Z_\A, d_\Z)$. The hyperbolic isometry of $\mathbb{H}^2$ along $\tilde{\tau}$, with translation distance equals to the length of $\tau$, is equivariant with respect to $\phi_\A$ and gives rise to an isometry $\rho_{\tilde{\tau}}$ of $(\Z_\A, d_\Z)$ with translation length $i(\A, \tau)$ and translation axis $\phi_\A(\tilde{\tau})$. This means for any vertex $s$ on the axis $\phi_\A(\tilde{\tau})$, we have $d_\Z(s, \rho_{\tilde{\tau}}(s)) = i(\tau, \A)$.

\subsection{Counting Simple Closed Geodesics} \label{Counting Simple Closed Geodesics}

Given $\gamma$ a simple closed curve or multicurve on any $X \in \M_{g,n}$, we denote
\begin{align*}
s_X(L, \gamma) = |\{\alpha \in \Mod_{g,n} \cdot \gamma \mid \ell_X(\alpha) \le L \}|
\end{align*}
the number of simple closed geodesics on $X$ of topological type $\gamma$ and of hyperbolic length at most $L$. The following formula is due to Mirzakhani.
\begin{theorem}[Counting Formula \cite{mirzakhani2008growth}]\label{Counting Formula}
Fix some $S_{g,n}$, given $\gamma$ a simple closed curve or a multicurve on any $X \in \M_{g,n}$, we have
	\begin{align}\label{counting formula}
	s_X(L, \gamma) \sim n_X(\gamma) \cdot L^{6g+2n-6}
	\end{align}
	where $n_X( \gamma)$ depends on the hyperbolic structure $X$ and the topological type of $\gamma$.
\end{theorem}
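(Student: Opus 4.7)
The plan is to follow Mirzakhani's ergodic-theoretic strategy, recognizing $s_X(L,\gamma)$ as a lattice-point count inside a dilating region of $\ML$ whose rate is governed by a Thurston volume. First I would extend the length function $\ell_X$ from $\ML(\mathbb{Z})$ to a continuous, degree-one homogeneous function on $\ML$ and write
$$s_X(L,\gamma) = \bigl|\{\alpha \in \Mod_{g,n} \cdot \gamma : \ell_X(\alpha) \le L\}\bigr|.$$
The length ball $B_X(L) = \{\lambda \in \ML : \ell_X(\lambda) \le L\} = L \cdot B_X(1)$ then has Thurston measure $\mu_{Th}(B_X(L)) = L^{h} B(X)$, where $B(X) := \mu_{Th}(B_X(1))$ and $h = 6g-6+2n$. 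This makes $s_X(L,\gamma)/L^{h}$ the natural normalization, and the candidate function $F_\gamma(X) := \lim_{L\to\infty} s_X(L,\gamma)/L^{h}$ is manifestly $\Mod_{g,n}$-invariant, so it descends to a function on $\M_{g,n}$.

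Next I would establish existence of this limit by appealing to Masur's theorem that $\Mod_{g,n}$ acts ergodically on $(\ML,\mu_{Th})$. Ergodicity gives equidistribution of the rescaled orbit in the sense of a weak convergence
$$\frac{1}{L^{h}} \sum_{\alpha \in \Mod_{g,n} \cdot \gamma} \delta_{\alpha/L} \;\rightharpoonup\; c(\gamma) \cdot \mu_{Th}$$
for a constant $c(\gamma)$ that depends only on the topological type and stabilizer of $\gamma$. Pairing this with the indicator of the continuity set $B_X(1)$ yields the asserted asymptotic with $n_X(\gamma) = c(\gamma)\,B(X)$.

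To identify $c(\gamma)$ concretely I would invoke Mirzakhani's integration formula over $\M_{g,n}$: average $s_X(L,\gamma)$ against a compactly supported test function on moduli space, unfold the sum over the $\Mod_{g,n}$-orbit, and express the result in terms of Weil--Petersson volumes $V_{g,n}(L_1,\dots,L_{3g-3+n})$ of the moduli spaces of bordered surfaces obtained by cutting $S_{g,n}$ along the components of $\gamma$. The stabilizer of $\gamma$ enters as an explicit combinatorial symmetry factor in the resulting formula for $c(\gamma)$.

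The main obstacle is upgrading the ergodicity-based equidistribution, which a priori holds only $\mu_{WP}$-almost everywhere, to a pointwise asymptotic at every $X \in \M_{g,n}$ with a continuous constant $n_X(\gamma)$. This requires showing that $\partial B_X(1)$ is $\mu_{Th}$-null, so that indicator functions of length balls are bona fide continuity sets, and then establishing continuity of $B(X)$ in $X$. The averaging-then-unfolding procedure is the technical heart: once the integrated version of the asymptotic is in place, continuity of both sides transfers it to a pointwise statement, completing the argument.
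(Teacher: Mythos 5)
The paper does not prove this statement: Theorem~\ref{Counting Formula} is cited directly from Mirzakhani \cite{mirzakhani2008growth} and used as a black box, so there is no ``paper's own proof'' to compare against. Your sketch is instead a summary of Mirzakhani's original argument, and as such it captures the right skeleton: extend $\ell_X$ to a homogeneous function on $\ML$, view $s_X(L,\gamma)$ as counting orbit points in the dilated length ball $L\cdot B_X(1)$, prove equidistribution of the rescaled counting measures $L^{-h}\sum_{\alpha\in\Mod_{g,n}\cdot\gamma}\delta_{\alpha/L}$ toward a multiple of Thurston measure, and identify the multiplicative constant via the integration/unfolding formula over $\M_{g,n}$ involving Weil--Petersson volumes.

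One place where your outline is too quick: ``ergodicity gives equidistribution'' elides the real work. Ergodicity of the $\Mod_{g,n}$-action on $(\ML,\mu_{Th})$ concerns $\mu_{Th}$-generic points, while the orbit $\Mod_{g,n}\cdot\gamma$ of an integral multicurve is a $\mu_{Th}$-null set, so Birkhoff-type arguments do not apply directly. What Mirzakhani actually does is first show the family $\{L^{-h}\sum_\alpha \delta_{\alpha/L}\}_L$ is uniformly locally bounded (so has weak-$*$ limit points), then observe that any limit is a locally finite $\Mod_{g,n}$-invariant measure on $\ML$ absolutely continuous with respect to a suitable reference and hence, by the classification of such invariant measures, a constant multiple $c(\gamma)\mu_{Th}$; the value of $c(\gamma)$ is then pinned down by the integral asymptotic over $\M_{g,n}$ obtained from the unfolding formula. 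Relatedly, your ``main obstacle'' paragraph slightly conflates the two relevant a.e.\ statements: the null-boundary condition $\mu_{Th}(\partial B_X(1))=0$ is needed to pair the weak limit with the indicator of $B_X(1)$, whereas the passage from a $\mu_{WP}$-a.e.\ statement in $X$ to an everywhere statement is what continuity of $X\mapsto B(X)$ (together with the equidistribution holding for every orbit, not just a.e.\ $X$) resolves. With those two points sharpened, your outline matches Mirzakhani's proof.
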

Later in the paper, we will count the sum of several topological types of multicurves. Thus we phrase the above Theorem \ref{Counting Formula} in the following equivalent way.
\begin{remark} \label{Repharse Counting Formula}
	  For any $\gamma, X$ and $\lambda >1$, there exist constants $n_X(\gamma)$ and $r_X(\gamma, \lambda)$ such that 
	\begin{align} \label{repharse counting formula}
		\frac{1}{\lambda} \cdot n_X( \gamma) \cdot L^{6g+2n-6} \le s_X(L, \gamma) \le \lambda \cdot n_X(\gamma) \cdot L^{6g+2n-6}
	\end{align}
	for any $L \ge r_X(\gamma, \lambda)$.
\end{remark}
It's also necessary for us to know how $n_X(\gamma)$ and $r_X(\gamma, \lambda)$ behave with respect to scaling the curve $\gamma$ for later purposes.
\begin{corollary} \label{nr}
		For any $\gamma, X, \lambda >1$ and $c \in \mathbb{N}$, we have
		\begin{align*}
		&r_X( c \cdot \gamma, \lambda) = c \cdot r_X(\gamma, \lambda) \\
		&n_X(c \cdot \gamma) = \frac{n_X(\gamma)}{c^{6g+2n-6}}
		\end{align*}
\end{corollary}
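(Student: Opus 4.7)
The plan is to observe that scaling the multicurve $\gamma$ by a positive integer $c$ induces a scale-preserving bijection on orbits and lengths, and then pull back the counting estimate from Remark \ref{Repharse Counting Formula}. Explicitly, the mapping class group action commutes with integer scaling of coefficients, so the map $\alpha \mapsto c \cdot \alpha$ is a bijection from $\Mod_{g,n} \cdot \gamma$ onto $\Mod_{g,n} \cdot (c\gamma)$. By the definition of length for multicurves given in the background section, $\ell_X(c \cdot \alpha) = c \cdot \ell_X(\alpha)$ for every $\alpha$ in the orbit.

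Combining these two observations, I would conclude that for every $L > 0$,
\begin{align*}
s_X(L, c \cdot \gamma) = |\{\alpha \in \Mod_{g,n} \cdot \gamma \mid c \cdot \ell_X(\alpha) \le L\}| = s_X(L/c, \gamma).
\end{align*}
Now I would apply Remark \ref{Repharse Counting Formula} to the right-hand side: with $h = 6g+2n-6$, whenever $L/c \ge r_X(\gamma, \lambda)$ one has
\begin{align*}
\frac{1}{\lambda} \cdot n_X(\gamma) \cdot \left(\frac{L}{c}\right)^{h} \le s_X(L, c \cdot \gamma) \le \lambda \cdot n_X(\gamma) \cdot \left(\frac{L}{c}\right)^{h}.
\end{align*}
Rewriting the outer terms as $\tfrac{1}{\lambda} \cdot (n_X(\gamma)/c^{h}) \cdot L^{h}$ and $\lambda \cdot (n_X(\gamma)/c^{h}) \cdot L^{h}$, and recalling that $n_X(c\gamma)$ and $r_X(c\gamma,\lambda)$ are characterized by being the coefficient and threshold that make the analogous inequality hold for $s_X(L, c\gamma)$, uniqueness of the leading constant forces $n_X(c\gamma) = n_X(\gamma)/c^{h}$, while the threshold condition $L/c \ge r_X(\gamma,\lambda)$ becomes $L \ge c \cdot r_X(\gamma,\lambda)$, giving $r_X(c\gamma,\lambda) = c \cdot r_X(\gamma,\lambda)$.

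There is essentially no obstacle here: the statement is a direct scaling computation, and the only subtle point is to confirm that the definitions in Section \ref{Counting Simple Closed Geodesics} genuinely pin down $n_X$ and $r_X$ from the asymptotic estimate (which they do, up to the freedom in $r_X$ being a threshold rather than a single value, but any valid threshold for $\gamma$ rescales to a valid threshold for $c\gamma$). I would state the argument in two short paragraphs corresponding to the bijection/length-scaling step and the substitution $L \mapsto L/c$ step, and conclude with both identities.
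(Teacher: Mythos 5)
Your proof is correct and follows essentially the same route as the paper: both hinge on the identity $s_X(L, c\gamma) = s_X(L/c, \gamma)$ and then substitute into the two-sided estimate of Remark \ref{Repharse Counting Formula}. You add a bit more justification (the orbit bijection, linearity of length, and the remark that $r_X$ is a threshold rather than a uniquely pinned-down constant), but the substance is the same as the paper's proof.
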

\begin{proof}
	Indeed, we have
	\begin{align*}
	s_X(L, c \cdot \gamma) = s_X \left( \frac{L}{c}, \gamma \right)
	\end{align*}
	and that
	\begin{align*}
	\frac{1}{\lambda} \cdot \frac{n_X(\gamma)}{c^{6g+2n-6}} \cdot L^{6g+2n-6} \le s_X \left(\frac{L}{c}, \gamma \right) \le \lambda \cdot \frac{n_X(\gamma)}{c^{6g+2n-6}} \cdot L^{6g+2n-6}
	\end{align*}
for any $\frac{L}{c} \ge r_X( \gamma, \lambda)$. This gives us the desired result.
\end{proof}

Since there are only finitely many topological types of simple closed curves, we denote $n_X( \mathcal S)$ the finite sum of $n_X( \gamma)$ where $\gamma$ ranges over all topological types of simple closed curves on $S_{g,n}$. We will use the notation $s_X(L, \mathcal S)$ to denote the number of all simple closed geodesics that have length bounded by $L$, and we will denote $r_X(\mathcal S, \lambda) = \max_{\gamma \in \mathcal S} r_X(\gamma, \lambda)$ for any $\lambda >1$.

\section{The effect of twisting on hyperbolic length} \label{The effect of twist on hyperbolic length}
In this section, we study how the length of simple closed geodesics on a hyperbolic surface change after applying a twist. In the next section, we use the results below to estimate how far a point in Teichm\"{u}ller space moves after applying a twist.

As our first result, we may obtain the following estimate from the length formula (\ref{length formula}) and intersection formula (\ref{intersection formula}).
\begin{proposition} \label{hyperbolic length not using}
	Fix some $\epsilon > 0$. Given a multicurve $\alpha = \sum_{i=1}^k a_i \alpha_i$ and a simple closed curve $\tau$ on a hyperbolic surface $\X \in \T_{g,n}^\epsilon$, there exists a constant $A$ depends only on $S_{g,n}$ and $\epsilon$ such that
	\begin{align*}
	 A \left(\sum_{i=1}^k |a_i| i(\alpha_i, \tau) \ell_\X(\alpha_i)+\ell_\X(\tau)\right) \ge \ell_{T_\alpha\X}(\tau) \ge \frac{1}{A}\left(\sum_{i=1}^k (|a_i|-2) i(\alpha_i, \tau) \ell_\X(\alpha_i)-\ell_\X(\tau)\right).
	\end{align*}
	Furthermore, if $\alpha$ is positive or negative, the lower bound can be sharpened to
	\begin{align*}
	\ell_{T_\alpha\X}(\tau) \ge \frac{1}{A}\left(\sum_{i=1}^k |a_i| i(\alpha_i, \tau) \ell_\X(\alpha_i)-\ell_\X(\tau)\right).
	\end{align*}
\end{proposition}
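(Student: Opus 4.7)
The plan is to chain together the Length Formula (Proposition~\ref{Length Formula}) and the Intersection Formula (Proposition~\ref{Intersection Formula}) in three steps, converting between lengths and intersection numbers by way of the short marking $\mu_\X$. The key starting observation is that the mapping class group action on Teichm\"uller space changes the marking, so $\ell_{T_\alpha\X}(\tau) = \ell_\X(T_\alpha^{-1}(\tau))$. Applying the Length Formula to $T_\alpha^{-1}(\tau)$ on $\X$ gives
\begin{align*}
\ell_{T_\alpha\X}(\tau) \stackrel{*C}{\asymp} \sum_{\gamma \in \mu_\X} i(T_\alpha^{-1}\tau,\,\gamma)\,\ell_\X(\bar\gamma),
\end{align*}
reducing matters to estimating intersection numbers of the twisted curve with curves of the short marking.

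Next, because $T_\alpha^{-1} = T_{-\alpha}$ and the Intersection Formula depends only on the absolute values $|a_i|$, Proposition~\ref{Intersection Formula} applies directly to each term $i(T_\alpha^{-1}\tau, \gamma)$, yielding
\begin{align*}
\left| i(T_\alpha^{-1}\tau, \gamma) - \sum_{i=1}^k |a_i|\,i(\alpha_i,\tau)\,i(\alpha_i,\gamma) \right| \le i(\tau,\gamma)
\end{align*}
when $\alpha$ is positive or negative, and the analogous two-sided bound with $(|a_i|-2)$ replacing $|a_i|$ on the lower side in the mixed-sign case. I would then substitute these estimates into the sum from the first step, swap the order of summation over $i$ and $\gamma$, and recognize the resulting inner sums $\sum_{\gamma \in \mu_\X} i(\alpha_i,\gamma)\,\ell_\X(\bar\gamma)$ and $\sum_{\gamma \in \mu_\X} i(\tau,\gamma)\,\ell_\X(\bar\gamma)$ as being coarsely equal to $\ell_\X(\alpha_i)$ and $\ell_\X(\tau)$ respectively, via a second application of the Length Formula. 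Collecting the two resulting factors of $C$ together with the additive $i(\tau,\gamma)$ slack into a single enlarged constant $A$ yields the proposition.

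The main bookkeeping obstacle I anticipate arises in the mixed-sign lower bound. Writing $Y_i = \sum_{\gamma \in \mu_\X} i(\alpha_i,\gamma)\,\ell_\X(\bar\gamma)$, which satisfies $Y_i \stackrel{*C}{\asymp} \ell_\X(\alpha_i)$, the sign of the coefficient $(|a_i|-2)$ in the sum $\sum_i (|a_i|-2)\,i(\alpha_i,\tau)\,Y_i$ dictates which side of this bound to invoke: for indices with $|a_i|\ge 2$ one uses $Y_i \ge \ell_\X(\alpha_i)/C$, while for $|a_i|=1$ one must use $Y_i \le C\,\ell_\X(\alpha_i)$. These opposing choices of direction must be absorbed into a single enlarged $A$, and in some configurations the resulting right-hand side is non-positive; in those cases the stated inequality is trivially valid since $\ell_{T_\alpha\X}(\tau) \ge 0$. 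The positive or negative case avoids this subtlety entirely because all coefficients $|a_i|$ are nonnegative, so the same direction of the Length Formula works throughout, giving the sharper lower bound $\frac{1}{A}\bigl(\sum_i |a_i|\,i(\alpha_i,\tau)\,\ell_\X(\alpha_i) - \ell_\X(\tau)\bigr)$ cleanly.
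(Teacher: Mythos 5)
Your proposal takes essentially the same route as the paper's proof: use the Length Formula to reduce $\ell_{T_\alpha\X}(\tau)=\ell_\X(T_\alpha^{-1}\tau)$ to intersection numbers with the short marking $\mu_\X$, estimate those via the (signed) Intersection Formula, swap the order of summation, and apply the Length Formula once more. Your observation that $T_\alpha^{-1}=T_{-\alpha}$ and that the Intersection Formula depends only on $|a_i|$ also matches the paper's implicit reasoning.

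Where you go beyond the paper's terse ``by switching the order of summations $\ldots$ and by applying the length formula again'' is the explicit discussion of the mixed-sign case: when $|a_i|=1$ the coefficient $(|a_i|-2)$ is negative, so one must use the opposite direction of the Length Formula for those terms and for the subtracted $\ell_\X(\tau)$ term. You should be aware, though, that after making these opposing choices the multiplicative constant in front of the nonnegative $(|a_i|-2)\,i(\alpha_i,\tau)\,\ell_\X(\alpha_i)$ terms is roughly $\epsilon/(C^2N)$, while the constant in front of the $|a_i|=1$ terms and the $\ell_\X(\tau)$ term is $1$; these cannot be reconciled term-by-term into a single $1/A$, and the ``trivially nonpositive right-hand side'' fallback does not catch every remaining configuration. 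So the claim that the directions can always be ``absorbed into a single enlarged $A$'' is not airtight. This imprecision is inherited from the paper's own one-line proof, and it is harmless in practice: the mixed-sign lower bound of this proposition is never invoked downstream (the mixed case is handled instead by Theorem \ref{Hyperbolic Length}, Proposition \ref{Hyperbolic Length again}, and Lemma \ref{assumption on length and intersection number}), and the sharpened positive/negative bound is only applied in Theorem \ref{Coarse Distance Formula} with $\tau$ a short-marking curve, so that $\ell_\X(\tau)\le N$ and the coefficient mismatch on that single additive term folds into the final constants.
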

\begin{proof}
	By the length formula (\ref{length formula}), we know
	\begin{align*}
	 CN\sum_{\gamma \in \mu_X} i(T_\alpha^{-1}(\tau), \gamma) \ge \ell_{T_\alpha\X}(\tau)  \ge \frac{\epsilon}{C}\sum_{\gamma \in \mu_X} i(T_\alpha^{-1}(\tau), \gamma).
	\end{align*}
	We can apply the signed intersection formula (\ref{signed intersection formula}) to approximate $i(T_\alpha^{-1}(\tau), \gamma)$. This allows us to expand the above inequality into the following:
	\begin{align*}
	&CN\sum_{\gamma \in \mu_\X} \left( \sum_{i=1}^k |a_i| i(\alpha_i, \gamma) i(\alpha_i, \tau)+i(\tau, \gamma) \right)\\
	&\ge \ell_{T_\alpha\X}(\tau) \\
	&\ge\frac{\epsilon}{C}\sum_{\gamma \in \mu_\X} \left( \sum_{i=1}^k (|a_i|-2) i(\alpha_i, \gamma) i(\alpha_i, \tau)- i(\tau, \gamma) \right).
	\end{align*}
	By switching the order of summations $\sum_{\gamma \in \mu_\X}$ and $\sum_{i=1}^k$ and by applying the length formula (\ref{length formula}) again, we obtain the result in the proposition. 
	
	If $\alpha$ is positive or negative, we can use the intersection formula (\ref{intersection formula}), and going through the same proof give us the sharpened lower bound.
\end{proof}

Note the above proposition provides a good estimate for the length of multicurves up to a multiplicative error. This error arises from our repeated use of length formula (\ref{length formula}). Below we propose a more generalized result that leads to removing this multiplicative error. Let $\lfloor \cdot \rfloor_0$ denote the $0$ threshold function.

\begin{theorem}\label{Hyperbolic Length}
	Given a multicurve $\alpha = \sum_{i=1}^k a_i \alpha_i$ and a simple closed curve $\tau$ on any hyperbolic structure $\X$, we have
	\begin{align}
	&\ell_{\X}(\tau) + \sum_{i=1}^k i(\tau, \alpha_i)|a_i| \ell_\X(\alpha_i)\label{estimate hyerbolic length} \\
	&\ge \ell_{T_\alpha \X}(\tau) \nonumber\\
	&\ge \sum_{i=1}^k i(\tau, \alpha_i) \cdot \Bigl\lfloor{(|a_i|-2)\cdot \ell_{\X}(\alpha_i)-2\ell_\X(\tau) - L\Bigl\rfloor}_0 \nonumber
	\end{align}
	where $L$ is a constant that depends on $\mathbb{H}^2$.
\end{theorem}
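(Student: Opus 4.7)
The plan is to pass to the universal cover $\mathbb{H}^2$ and exploit the shearing construction of Section \ref{Lifts of twists}. Let $p\colon \mathbb{H}^2 \to X$ be the cover, $\tau_0$ a fixed lift of $\tau$, and $\tilde{\A}$ the collection of lifts of the curves in $\A = \{\alpha_i\}_{i=1}^k$. Fixing the complementary region containing a basepoint on $\tau_0$ and shearing along $\tilde{\A}$ according to the weights $a_i$ produces a broken-geodesic path $\tau'$ whose endpoints on $\partial \mathbb{H}^2$ determine a bi-infinite geodesic $\sigma$; by Section \ref{Lifts of twists}, $\sigma$ is a lift of $T_\alpha(\tau)$ in $T_\alpha \X$, so $\ell_{T_\alpha \X}(\tau)$ is the translation length of $\sigma$, i.e.\ its length over one fundamental domain. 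Within one period, $\tau'$ consists of geodesic sub-arcs of $\tau_0$ of total length $\ell_\X(\tau)$ together with one shear segment of length $|a_s|\ell_\X(\alpha_s)$ for each of the $i(\tau,\alpha_s)$ crossings of lifts of $\alpha_s$, summed over $s$.

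The upper bound is then immediate: both $\sigma$ and $\tau'$ are invariant under the same deck transformation, and $\sigma$ minimizes length in its free homotopy class, so its translation length is bounded above by the length of $\tau'$ per period, which is exactly $\ell_\X(\tau) + \sum_{i} i(\tau,\alpha_i)|a_i|\ell_\X(\alpha_i)$.

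For the lower bound I would isolate one crossing at a time. Let $\tilde{\alpha}$ be a fixed lift of some $\alpha_s$ crossed by $\tau_0$, and let $p, q \in \tilde{\alpha}$ denote the endpoints of the inserted shear segment, so that $d(p,q) = |a_s|\ell_\X(\alpha_s)$. The Bass--Serre tree dual to $\tilde{\A}$ (Section \ref{Bass-Serre Tree}) records that $\sigma$ crosses the same sequence of lifts in $\tilde{\A}$ as $\tau'$, so in particular $\sigma$ crosses $\tilde{\alpha}$; since $\sigma$ and $\tilde{\alpha}$ are bi-infinite geodesics in $\mathbb{H}^2$, $\sigma$ enters the $L$-neighborhood of $\tilde{\alpha}$ (with $L$ the contraction constant of Section \ref{upper half plane properties}) at a single point $\sigma_L$ and exits at $\sigma_R$. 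By Lemma \ref{1-Lip}, the length of the sub-arc of $\sigma$ inside this $L$-neighborhood is at least $d(\pi_{\tilde{\alpha}}(\sigma_L), \pi_{\tilde{\alpha}}(\sigma_R))$. A fellow-traveller estimate — combining Corollary \ref{Crossing Fellow Travel} with the facts that the two $\tau'$-rays emanating from $p$ and $q$ are geodesic arcs of length at most $\ell_\X(\tau)$ before meeting the next sheared lift, and that by the $1$-Lipschitz property their projections onto $\tilde{\alpha}$ extend by at most $\ell_\X(\tau)$ past $p$ and $q$ — shows that $d(\pi_{\tilde{\alpha}}(\sigma_L), p)$ and $d(\pi_{\tilde{\alpha}}(\sigma_R), q)$ are each bounded by $\ell_\X(\alpha_s) + \ell_\X(\tau) + O(L)$. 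The contribution of this crossing is therefore at least $(|a_s|-2)\ell_\X(\alpha_s) - 2\ell_\X(\tau) - L$ when positive. Summing over all crossings in one period (noting that collars of distinct lifts in $\tilde{\A}$ are disjoint by the Collar Lemma, so the contributions do not overlap) and flooring at $0$ yields the claimed lower bound.

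The main technical obstacle is the fellow-traveller estimate placing $\pi_{\tilde{\alpha}}(\sigma_L)$ and $\pi_{\tilde{\alpha}}(\sigma_R)$ close to $p$ and $q$: when $\tau$ is long or shears at neighboring lifts are also large, $\tau'$ can wind near $\tilde{\alpha}$ for a while before peeling off, and one must quantify this uniformly using the $1$-Lipschitz projection and the disjointness of collars. The floor at $0$ in the statement reflects the regime in which the shear distance $|a_i|\ell_\X(\alpha_i)$ is not appreciably larger than the combined error $2\ell_\X(\alpha_i) + 2\ell_\X(\tau) + L$, where the crossing simply gives no useful contribution.
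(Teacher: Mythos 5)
Your overall strategy --- passing to $\mathbb{H}^2$, shearing to obtain $\tau'$ and the straightened geodesic $\sigma$, getting the upper bound from minimality of $\sigma$ in its homotopy class, and getting the lower bound by projecting entry/exit points of $\sigma$ onto each crossed lift via the $1$-Lipschitz projection and the strong-contraction constant $L$ --- is the same as the paper's, and your decomposition of the error ($\ell_\X(\alpha_s)$ from the projection of a neighboring lift, $\ell_\X(\tau)$ from the connecting sub-arcs of $\tau'$, a multiple of $L$ from fellow-travelling) lines up with the paper's constants. The upper bound is fine.

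The gap is in the very last step, where you sum the per-crossing contributions ``noting that collars of distinct lifts in $\tilde{\A}$ are disjoint by the Collar Lemma, so the contributions do not overlap.'' This fails precisely in the regime where the theorem says anything: the Collar Lemma gives collar widths of order $1/\ell_\X(\alpha_i)$, whereas $L$ is a fixed constant of $\mathbb{H}^2$, so when the $\ell_\X(\alpha_i)$ are large (which is exactly when the floor in the statement is positive) the collar widths are much smaller than $L$ and the $L$-neighborhoods of distinct lifts in $\tilde{\A}$ overlap --- a single $L$-neighborhood typically contains many other lifts. Summing lengths of the sub-arcs of $\sigma$ lying inside these overlapping neighborhoods then over-counts, so the inequality you need does not follow. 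The paper avoids this without any disjointness assumption: for each crossed lift $\eta_i$ it fixes the first entry point $w_i$ of $\sigma$ into the $L$-neighborhood of $\eta_i$, and proves the monotone ordering $w_i \ll w_j$ for $i<j$ directly from strong contraction --- if $w_j \ll w_i$ then $d(\pi_{\eta_i}(w_i),\pi_{\eta_i}(w_j)) \le L$ and $d(\eta_{i,R},\pi_{\eta_i}(w_j)) \le L$, which together with $d(\pi_{\eta_i}(w_i),\eta_{i,L}) \le 2L$ forces $d(\eta_{i,L},\eta_{i,R}) \le 4L$, contradicting the per-crossing lower bound $D_i > 4L$. With this monotonicity one telescopes $\ell_{T_\alpha\X}(\tau) = d(w_0,w_m) = \sum_{i=1}^m d(w_{i-1},w_i)$, with crossings having $D_i\le 4L$ simply dropped (hence the floor). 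You need this contraction-based ordering argument, or an equivalent; the Collar Lemma cannot supply it.
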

\begin{proof}
	Fix the hyperbolic structure we may assume curves are geodesics. Given a multicurve $\alpha = \sum_{i=1}^k a_i \alpha_i$, we denote $\A = \{\alpha_i\}_{i=1}^k$ and denote $\tilde{\A}$ the set of all liftings of curves in $\A$ to $\mathbb{H}^2$. Let $\Z_\A$ denote the corresponding Bass-Serre tree, see section \ref{Bass-Serre Tree}.
	
	For each $\beta \in \tilde{\A}$, we denote $\psi_{\beta} \in \pi_1(S_{g,n})$ the corresponding hyperbolic isometry in $\mathbb{H}^2$. If $\beta, \gamma \in \tilde{\A}$ are lifts of the same $\alpha \in \A$, then $\psi_{\beta}, \psi_{\gamma}$ are conjugate to each other and have the same translation distance equal to $\ell_\X(\alpha)$. This also means there exists an isometry $\psi$ in $\mathbb{H}^2$ that sends $\gamma$ to $\beta$. We can choose this isometry up to composing with any power of $\psi_\beta$ or pre-composing with any power of $\psi_\gamma$. In particular, suppose there are geodesic segments $\beta' \subset \beta, \gamma' \subset \gamma$ such that their length are less than $\ell_\X(\alpha)$, we can choose the isometry $\psi$ in a way such that $\beta'$ and $\psi(\gamma')$ both lie on $\beta$ and intersect the same fundamental domain of the action of $\psi_\beta$.
	
	Given a simple closed curve $\tau$, we denote
	\begin{align*}
		i(\tau, \alpha) = m = \sum_{i=1}^k i(\tau, \alpha_i) = \sum_{i=1}^k m_i.
	\end{align*}
	In the case of $i(\tau, \alpha) = 0$, $T_\alpha$ has no effect on $\tau$ and the theorem holds true. We may assume $i(\tau, \alpha) \ge 1$. Let $\tilde{\tau}$ be a lifting of $\tau$ and say it has end points $q_L, q_R \in \partial \mathbb{H}^2$. $\tilde{\tau}$ is therefore a bi-infinite geodesic in $\mathbb{H}^2$ transverse to $\tilde{\A}$, and $\phi_\A(\tilde{\tau})$ is a bi-infinite geodesic in $\Z_\A$, say the edges are labeled by
	\begin{align*}
		\B = \{\cdots, \beta_{-2}, \beta_{-1}, \beta_0, \beta_1, \beta_2, \cdots\}.
	\end{align*}
	
	For each $\beta_i$, let's denote $\pi_{\beta_i}(\beta_{i-1}), \pi_{\beta_i}(\beta_{i+1})$ as $\beta_{i,L}, \beta_{i,R}$ respectively. Define a index function $s\colon \mathbb{N} \to \{1, \cdots, k\}$ so that $\alpha_{s(i)} \in \A$ is the simple closed curve such that $\beta_i$ is a lift of $\alpha_{s(i)}$. For each $i$, we claim $d(\beta_{i,L}, \beta_{i,R}) \le 2\ell_\X(\tau)$. In the case of $i(\tau, \A) =1$, we pick $\kappa \subset \tilde{\tau}$ to be the geodesic segment between the points $\tilde{\tau} \cap \beta_{i-1}$ and $\tilde{\tau} \cap \beta_{i+1}$. Then $\kappa$ is a concatenation of two consecutive path liftings of $\tau$ and $\ell_\X(\kappa) = 2\ell_\X(\tau)$. For $i(\tau, \A) \ge 2$, we pick $\kappa$ to be the path lifting of $\tau$ starting from $\beta_{i-1} \cap \tilde{\tau}$. In any case, $\kappa \subset \tilde{\tau}$ goes through $\beta_{i-1}, \beta_i, \beta_{i+1}$ and has length bounded by $2\ell_\X(\tau)$. By Lemma \ref{1-Lip} we know the projection maps $\pi_{\beta_i}$ are $1$-Lipschitz, thus the distance between projections of the two endpoints of $\kappa$ on $\beta_i$ is smaller than the length of $\kappa$, which is less than $2\ell_\X(\tau)$. Since the projections of the two endpoints lie in $\beta_{i,L}, \beta_{i,R}$ respectively, we have $d(\beta_{i,L}, \beta_{i,R}) \le 2\ell_\X(\tau)$.
	
	Fix some point $q_0 \in \tilde{\tau}$ and let $\tilde{\tau}'$ be $\tilde{\tau}$ after shearing according to $-\alpha$ fixing the component of $q_0$, see section \ref{Lifts of twists}. The projection of $\tilde{\tau}'$ to the surface $\X$ has length equal to $\ell_{\X}(\tau)+\sum_{i=1}^k i(\tau, \alpha_i)|a_i| \ell_\X(\alpha_i)$. Denote the end points of $\tilde{\tau}'$ as $q_L', q_R' \in \partial \mathbb{H}^2$. Let $\sigma$ be the geodesic with end points $q_L', q_R' \in \partial \mathbb{H}^2$, then $\sigma$ is a lift of the geodesic $T^{-1}_\alpha(\tau)$ and its image $\phi_\A(\sigma)$ is a geodesic in $\Z_\A$. Since the projection of $\tilde{\tau}'$ is in the isotopy class $T^{-1}_\alpha(\tau)$, the upper bound in (\ref{estimate hyerbolic length}) follows.
	
	Once $\tilde{\tau}'$ leaves a connected component of $\mathbb{H}^2 \setminus \cup \tilde{\A}$, it never comes back. This means $\phi_\A(\tilde{\tau}')$ does not back track in $\Z_\A$ so $\phi_\A(\tilde{\tau}')$ is a geodesic path in $\Z_\A$. Since $\sigma$ shares the same endpoints with $\tilde{\tau}'$ and since $\Z_\A$ is a unique geodesic space, we have $\phi_\A(\tilde{\tau}')= \phi_\A(\sigma)$. Denote the edge labels of $\phi_\A(\sigma)$ as
	\begin{align*}
		 \F = \{\cdots, \eta_{-2}, \eta_{-1}, \eta_0, \eta_1, \eta_2, \cdots\}.
	\end{align*}
	
	For each $\eta_i$, let's denote $\pi_{\eta_i}(\eta_{i-1}), \pi_{\eta_i}(\eta_{i+1})$ as $\eta_{i,L}, \eta_{i,R}$ respectively. Since each $\beta_i$ and $\eta_i$ are lifts of the same curve, we can use the same index function $s$ denoting $\alpha_{s(i)} \in \A$ the simple closed curve such that $\eta_i$ is a lift of $\alpha_{s(i)}$. Since for each $i$, the triples $(\tilde{\tau},\beta_i, \beta_{i+1})$ and $(\tilde{\tau}', \eta_i, \eta_{i+1})$ realize the same intersection pattern on the surface, $\eta_{i,L}, \eta_{i,R}$ are translations of $\beta_{i,L}, \beta_{i,R}$ respectively and have the same diameters respectively.
	
	The relative location of $\eta_{i,L}, \eta_{i,R}$ is the same as the relative location of $\beta_{i,L}$ and $\psi_{\beta_i}^{a_{s(i)}}(\beta_{i,R})$, see Figure \ref{pic4} for an illustration. Recall for any point $x$ on any $\beta_i$ and for any $t \in \mathbb{Z}$, we have $d(x,\psi_{\beta_i}^t (x)) = |t|\ell_{\X}(\alpha_{s(i)})$. Since both $\diam(\beta_{i,L}), \diam(\beta_{i,R})$ are bounded by $\ell_{\X}(\alpha_{s(i)})$, and since $d(\beta_{i,L}, \beta_{i,R}) \le 2\ell_\X(\tau)$, we have $\diam(\beta_{i,L} \cup \beta_{i,R}) \le 2\ell_{\X}(\alpha_{s(i)}) + 2\ell_\X(\tau)$. It follows that
	\begin{align*}
	d(\eta_{i,L}, \eta_{i,R}) &\ge|a_{s(i)}| \ell_{\X}(\alpha_{s(i)}) - \diam(\beta_{i,L} \cup \beta_{i,R}) \\
	&\ge (|a_{s(i)}|-2) \ell_{\X}(\alpha_{s(i)}) - 2\ell_\X(\tau).
	\end{align*}
	Denote
	\begin{align*}
	D_i =  (|a_{s(i)}|-2) \ell_{\X}(\alpha_{s(i)}) - 2\ell_\X(\tau)
	\end{align*}
	so that $d(\eta_{i,L}, \eta_{i,R}) \ge D_i$ for any $i$.

	\begin{figure}[h]
		\begin{center}
			\begin{tikzpicture}
			\node[anchor=south west,inner sep=0] at (0,0) {\includegraphics[scale=0.35]{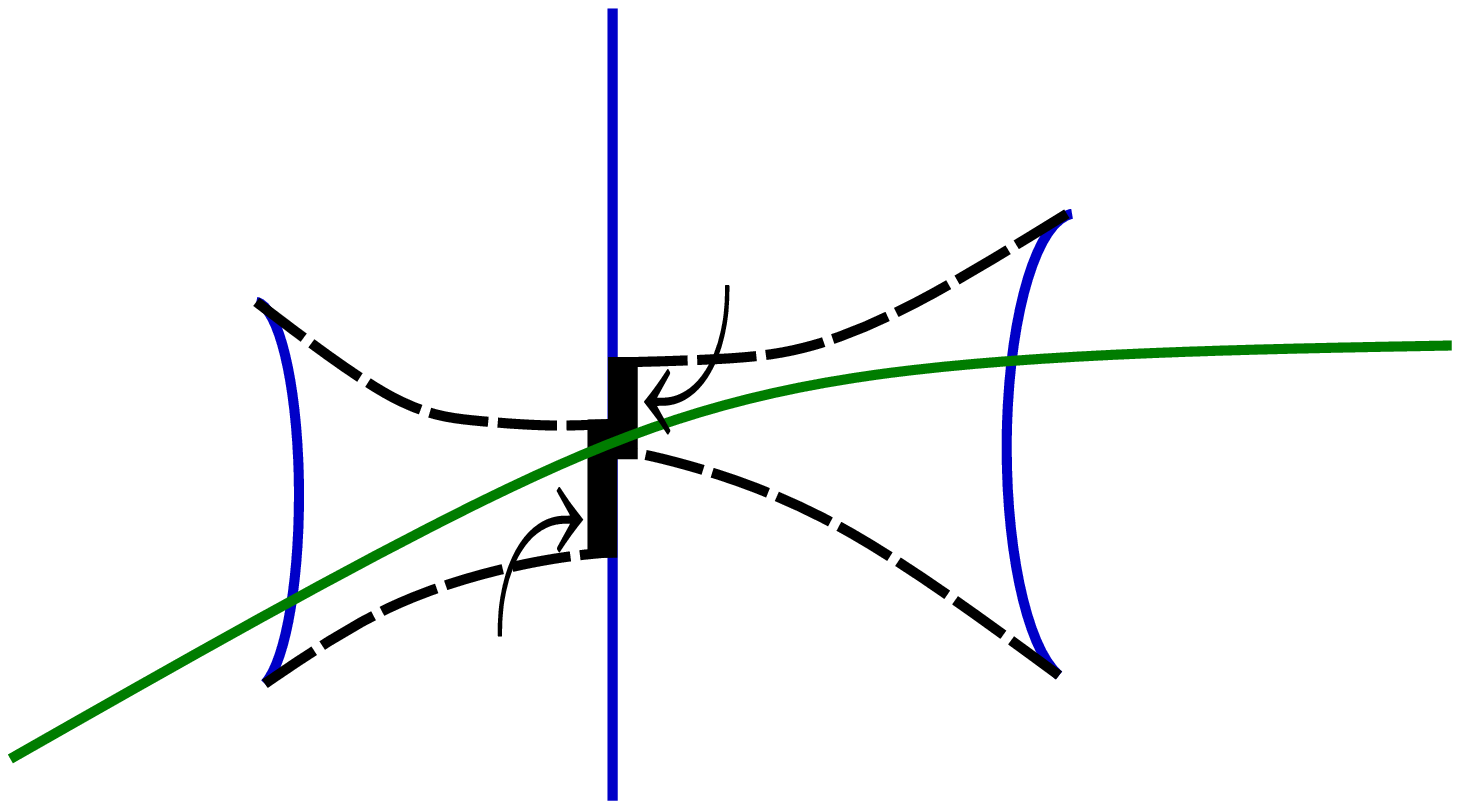}};
			\node at (4.5, 2){\textcolor{OliveGreen}{$\tau$}};
			\node at (1.8, 2.5){\textcolor{blue}{$\beta_i$}};
			\node at (1, 2.5){\textcolor{blue}{$\beta_{i-1}$}};
			\node at (3.8, 2.5){\textcolor{blue}{$\beta_{i+1}$}};
			\node at (1.8, .4){$\beta_{i, L}$};
			\node at (2.7, 2.1){$\beta_{i, R}$};
			\end{tikzpicture}
			\qquad
			\begin{tikzpicture}
			\node[anchor=south west,inner sep=0] at (0,0) {\includegraphics[scale=0.35]{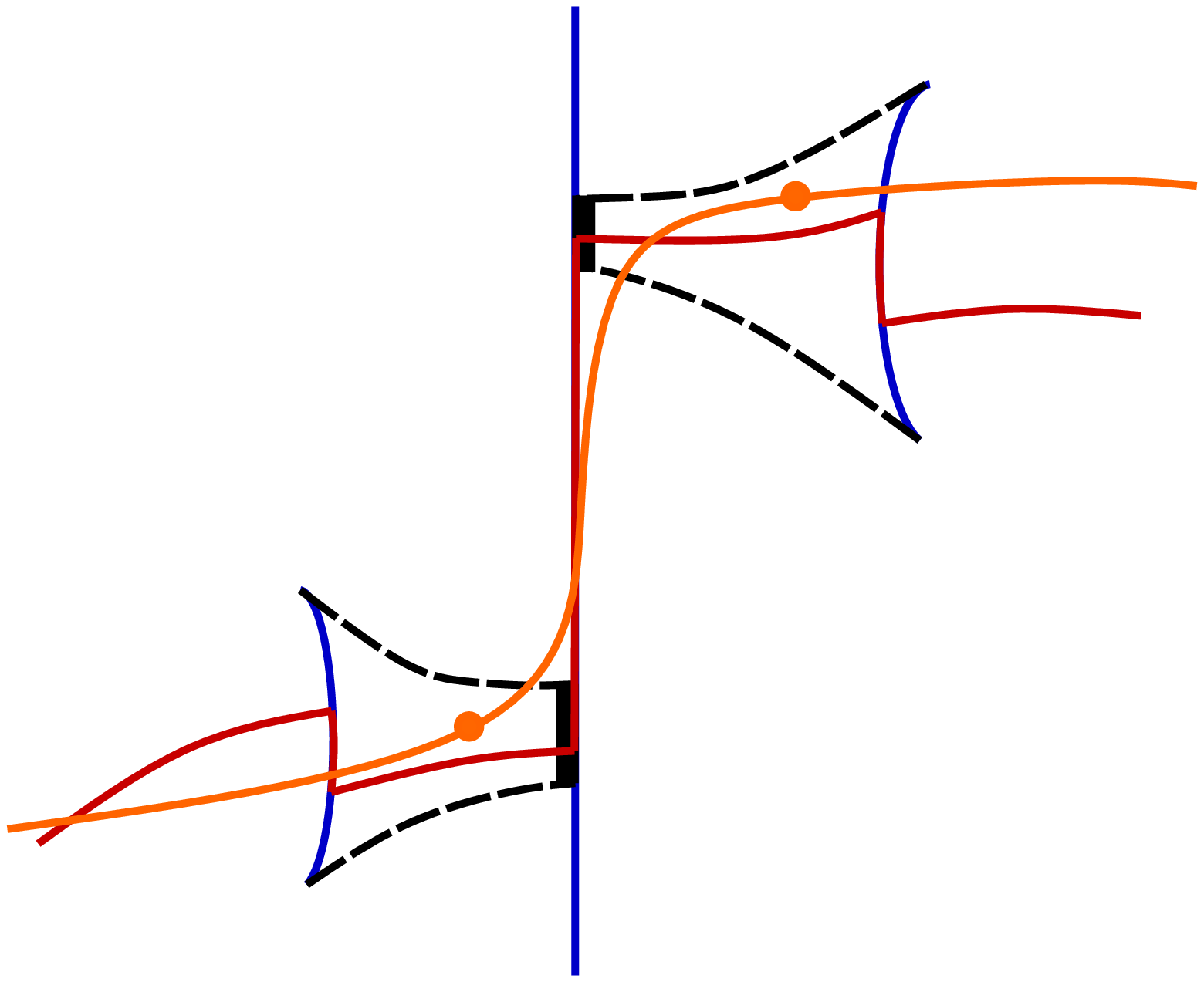}};
			\node at (2.3, 2.15){\textcolor{blue}{$\eta_i$}};
			\node at (1.3, 2.15){\textcolor{blue}{$\eta_{i-1}$}};
			\node at (4.3, 2.15){\textcolor{blue}{$\eta_{i+1}$}};
			\node at (2.2, 3.4){$\eta_{i, R}$};
			\node at (3.1, 1.1){$\eta_{i, L}$};
			\node at (3.5, 4){\textcolor{orange}{$w_{i+1}$}};
			\node at (2.1, 1.6){\textcolor{orange}{$w_{i}$}};
			\node at (5, 2.8){\textcolor{red}{$\tau'$}};
			\node at (5, 4){\textcolor{orange}{$\sigma$}};
			\end{tikzpicture}
		\end{center}
		\caption{Before and after shearing}
		\label{pic4}
	\end{figure}
	
	Denote the sequence of points $\{w_{i}\}_{i \in \mathbb{N}}$ on $\sigma$ such that each $w_{i}$ is the first point on $\sigma$ entering the $L$-neighborhood of $\eta_i$ from left. Since $\pi_{\eta_i}(q'_L) \in \eta_{i,L}$, by Corollary \ref{Crossing Fellow Travel}, for each $i$ we have
	\begin{align*}
	d(\pi_{\eta_i}(w_i), \eta_{i,L}) \le d (\pi_{\eta_i}(w_i), \pi_{\eta_i}(q'_L)) \le 2L.
	\end{align*}
	Moreover, all these points are equivalent under translation of $\sigma$, i.e, for any $i$ we have
	\begin{align*}
	d(w_{i}, w_{i+m}) = \ell_{\X}(T_\alpha^{-1}(\tau)) = \ell_{T_{\alpha}\X}(\tau).
	\end{align*}  
	By Lemma \ref{1-Lip}, we know projection $\pi_{\eta_i}$ is 1-Lipschitz for any $i$. Since $\pi_{\eta_i}(\eta_j) \subset \eta_{i,R}$, for any $i < j$ we have
	\begin{align}
	d(w_{i}, w_{j}) \ge d(\eta_{i,L}, \eta_{i,R}) - 4L \ge \lfloor D_i - 4L \rfloor_0.
	\end{align}
	See Figure \ref{pic4} for an illustration.
	
	We use $ a \ll b$ to denote that $\sigma$ goes through the point $a$ first and then the point $b$ from left to right.	We claim for any $i$ such that $D_i > 4L$ and for any $j > i$, we have $w_{i} \ll w_{j}$. Indeed, suppose $w_{j} \ll w_{i}$, then by definition the geodesic segment from $w_j$ to $w_i$ completely lies outside the $L$-neighborhood of $\eta_i$, and this means $d(\pi_{\eta_i}(w_{i}), \pi_{\eta_i}(w_{j})) \le L$ because geodesics are strongly contracting in $\mathbb H^2$, see section \ref{upper half plane properties}. Meanwhile, we know $d(\eta_{i,R}, \pi_{\eta_i}(w_{j})) \le  d(\pi_{\eta_i}(\eta_j), \pi_{\eta_i}(w_{j})) \le d(\eta_j, w_{j}) = L$ because $\pi_{\eta_i}(\eta_j) \subset \eta_{i,R}$ and because the projection map $\pi_{\eta_i}$ is $1$-Lipschitz. Combining with the previous fact $d(\pi_{\eta_i}(w_i), \eta_{i,L}) \le 2L$, we conclude
	\begin{align*}
	    D_i &\le d(\eta_{i,L}, \eta_{i,R}) \\
	    &\le d(\pi_{\eta_i}(w_i), \eta_{i,L}) + d(\pi_{\eta_i}(w_{i}), \pi_{\eta_i}(w_{j})) + d(\eta_{i,R}, \pi_{\eta_i}(w_{j})) \\
	    &\le 2L+L+L = 4L.
	\end{align*}
	And this contradicts $D_i > 4L$. Therefore, we have a pattern of ordering
	\begin{align*}
	\cdots \ll w_{0} \ll w_{1} \ll \cdots \ll w_{m-1} \ll w_{m} \ll \cdots
	\end{align*}
	on $\sigma$ provided that each $D_i > 4L$. In this case we have
	\begin{align} \label{almost hyperbolic length lower bound}
	\ell_{T_{\alpha}\X}(\tau) = d(w_{0}, w_{m}) \ge \sum_{i=1}^{m} d(w_{i-1}, w_{i}) \ge \sum_{i=1}^{m} \lfloor D_i - 4L \rfloor_0.
	\end{align}
	If for some $i$, $D_i \le 4L$, we can delete the point $w_{i}$ from our sequence and we only need to measure $d (w_{i-1}, w_{i+1})$ instead. The same result (\ref{almost hyperbolic length lower bound}) holds. Replacing $4L$ by $L$ gives us the lower bound in (\ref{estimate hyerbolic length}).
\end{proof}

While the above Theorem \ref{Hyperbolic Length} no longer has multiplicative error, we are not yet able to provide an effective lower bound for multicurves with mixed sign and with each coefficient having absolute value $\le 2$. The Proposition \ref{Hyperbolic Length again} below takes one more step and will lead to an effective lower bound for ``long'' multicurves with mixed sign and with each coefficient having absolute value $\ge 2$. Before that, we make the following two remarks that would help us establish Proposition \ref{Hyperbolic Length again}.

\begin{remark} \label{same or disjoint}
    We notice the following in the proof of Theorem \ref{Hyperbolic Length}. 
    
    Recall that we denote $i(\tau, \alpha) = m = \sum_{i=1}^k i(\tau, \alpha_i) = \sum_{i=1}^k m_i$. Let's fix $m$-many consecutive lifts in $\B$ and denote it as $\B_m \subset \B$ . Take any $\alpha_i$ in the multicurve $\alpha$ and without loss of generality, say $\beta_{i(1)}, \cdots, \beta_{i(m_i)}$ are all the lifts of this $\alpha_i$ in $\B_m$. As discussed in the proof of Theorem \ref{Hyperbolic Length}, for all $1 \le j \le m_i$, there exist isometries $\phi_{j}$ sending $\beta_{i(j)}$ to $\beta_{i(1)}$ such that all $\beta_{i(1),R}, \phi_j(\beta_{i(j), R})$ lie on $\beta_{i(1)}$. For any distinct pair $\beta_{i(j_1)},\beta_{i(j_2)}$ where $1 \le j_1 , j_2 \le m_i$, the orbits $\left\langle \psi_{\beta_{i(1)}} \right\rangle \cdot \phi_{j_1}(\beta_{i(j_1), R})$ and $\left\langle \psi_{\beta_{i(1)}} \right\rangle \cdot \phi_{j_2}(\beta_{i(j_2), R})$ are either the same or completely disjoint. That is, for any distinct pair $\beta_{i(j_1), R}, \beta_{i(j_2), R}$, either $\phi_{j_1}(\beta_{i(j_1), R}) = \phi_{j_2}(\beta_{i(j_2), R})$ or they are disjoint.
    
    Thus except the repetitive ones, we can further assume all $\beta_{i(1),R}, \psi_j(\beta_{i(j), R})$ are disjoint, lie on $\beta_{i(1)}$, and lie in a same fundamental domain of the action of $\psi_{\beta_{i(1)}}$. Denote the intersection of this fundamental domain with $\beta_{i(1)}$ as $\overline{\beta^R_{i(1)}}$, and it follows $\overline{\beta^R_{i(1)}} \subset \beta_{i(1)}$ is a path lifting of $\alpha_i$ and $\diam(\overline{\beta^R_{i(1)}}) = \ell_\X(\alpha_i)$. This means we have
    \begin{align*}
        \diam \left( \beta_{i(1),R} \cup \phi_2(\beta_{i(2), R}) \cup \cdots \cup \phi_{m_i}(\beta_{i(m_i), R}) \right) \le \ell_\X(\alpha_i).
    \end{align*}
    After removing the repetitive ones, the disjoint union of all these right neighbor projections $\phi_{j}(\beta_{j, R})$ can be arranged into $\overline{\beta^R_{i(1)}}$, a geodesic segment of diameter $\ell_\X(\alpha_i)$. One can do the same thing to all the left neighbors, and the union of all these left neighbor projections $\phi_{j}(\beta_{j, L})$ can be arranged into $\overline{\beta^L_{i(1)}}$, a geodesic segment of diameter $\ell_\X(\alpha_i)$
\end{remark}

\begin{remark}\label{all in segment}
    Continuing on Remark \ref{same or disjoint}, we recall there are $m_i$ many intersections points between $\alpha_i$ and $\tau$, and let's denote the set of these points on the surface as $X_i= \{x_1, \cdots, x_{m_i}\}$. 
    On one hand, we can lift $X_i$ to $Y_i = \{y_1, \cdots, y_{m_i}\}$, where each $y_j = \tilde{\tau} \cap \beta_{i(j)}, \beta_{i(j)} \in \B_m$.
    On the other hand, we can lift $X_i$ to $Z_i = \{z_1, \cdots, z_{m_i}\}$ where each $z_j = \phi_j(y_j)$, so all points in $Z_i$ lie in a geodesic segment of diameter $\ell_\X(\alpha_i)$, namely, $\overline{\beta^R_{i(1)}}$.

	For any of these intersection points $z_j$, we denote the corresponding lift of $\tau$ as $\tilde{\tau}_j$. The left neighbor of $z_j$ is defined to be the previous $\tilde{\alpha} \in \A$ that $\tilde{\tau}_j$ intersects, and the right neighbor of $z_j$ is the next $\tilde{\alpha} \in \A$ that $\tilde{\tau}_j$ intersects. Now, we notice the union of all these right neighbors projections of $Z_i$ are exactly the union of right neighbor projections we considered in Remark \ref{same or disjoint} and lie in $\overline{\beta^R_{i(1)}}$. Similarly for the left neighbors.
	
\end{remark}

\begin{proposition}\label{Hyperbolic Length again}
	Given a multicurve $\alpha = \sum_{i=1}^k a_i \alpha_i$ and a simple closed curve $\tau$ on any hyperbolic structure $\X$. Let $K \in (0,1)$ be a constant, we have
	\begin{align}
		\ell_{T_\alpha \X}(\tau) \ge \sum_{i=1}^k \min\{\mathcal L_1^i, \mathcal L_2^i\}  \label{hyperbolic length lower bound effective}
	\end{align}
	where
	\begin{align*}
		&\mathcal L_1^i = i(\tau, \alpha_i) \cdot \Bigl\lfloor{(|a_i|-2+K)\cdot \ell_{\X}(\alpha_i)-2\ell_\X(\tau) - L\Bigl\rfloor}_0, \\
		&\mathcal L_2^i = \Bigl\lfloor i(\tau, \alpha_i) - \frac{K \ell_\X(\alpha_{i}) + 4 \ell_\X(\tau)}{W(\tau)} \Bigl\rfloor_0 \cdot  \Bigl\lfloor{(|a_i|-1-K)\cdot \ell_{\X}(\alpha_i) - 2\ell_\X(\tau) - L\Bigl\rfloor}_0.
	\end{align*}
\end{proposition}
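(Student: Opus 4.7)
The plan is to follow the setup and notation of the proof of Theorem~\ref{Hyperbolic Length} and sharpen the per-crossing lower bound $d(\eta_{i,L},\eta_{i,R})\ge |a_i|\ell_\X(\alpha_i)-\diam(\beta_{i,L}\cup\beta_{i,R})$ used there by exploiting the collective constraints provided by Remarks~\ref{same or disjoint} and~\ref{all in segment}. For each component $\alpha_i$ of $\alpha$ with $m_i=i(\tau,\alpha_i)\ge 1$, I translate all $m_i$ crossings of $\tau$ with the lifts $\beta_{i(j)}$ of $\alpha_i$ onto the single lift $\beta_{i(1)}$ via the deck transformations $\phi_j$ constructed in Remark~\ref{same or disjoint}; after discarding repetitions, the right-neighbor projections $\phi_j(\beta_{i(j),R})$ form a disjoint family inside the fundamental segment $\overline{\beta_{i(1)}^R}$ of length $\ell_\X(\alpha_i)$, and likewise for the left-neighbor projections in $\overline{\beta_{i(1)}^L}$. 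The Collar Lemma~\ref{Collar Lemma} further implies that the translated intersection points $z_j=\phi_j(y_j)$, lying on pairwise distinct lifts of $\tau$, are pairwise separated by at least $W(\tau)$ along $\beta_{i(1)}$.

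To obtain the $\mathcal L_1^i$ branch of the bound, I would keep the universal individual diameter estimates $\diam(\beta_{i(j),L}),\diam(\beta_{i(j),R})\le \ell_\X(\alpha_i)$ from the proof of Theorem~\ref{Hyperbolic Length}, but incorporate a uniform $K\ell_\X(\alpha_i)$ improvement in every crossing by averaging against the aggregate constraint $\sum_j\bigl(\diam\phi_j(\beta_{i(j),L})+\diam\phi_j(\beta_{i(j),R})\bigr)\le 2\ell_\X(\alpha_i)$ coming from Remark~\ref{same or disjoint}. This produces the per-crossing lower bound $(|a_i|-2+K)\ell_\X(\alpha_i)-2\ell_\X(\tau)-L$ applicable to all $m_i$ crossings, which after the threshold operation yields $\mathcal L_1^i$.

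To obtain the $\mathcal L_2^i$ branch, I would introduce a dichotomy, declaring a crossing $j$ \emph{peripheral} if $z_j$ lies within the $\bigl(\tfrac{K}{2}\ell_\X(\alpha_i)+2\ell_\X(\tau)\bigr)$-end-zones of $\overline{\beta_{i(1)}^R}$, and \emph{central} otherwise. Since the $z_j$ are $W(\tau)$-separated and the peripheral region has total length $K\ell_\X(\alpha_i)+4\ell_\X(\tau)$, there are at most $(K\ell_\X(\alpha_i)+4\ell_\X(\tau))/W(\tau)$ peripheral crossings. For a central crossing, the $1$-Lipschitz property of $\pi_{\beta_{i(1)}}$ applied to a path-lift of $\tau$ of length at most $\ell_\X(\tau)$ forces at least one point of each of $\phi_j(\beta_{i(j),L})$ and $\phi_j(\beta_{i(j),R})$ to lie within $\ell_\X(\tau)$ of $z_j$; combined with the fact that both projections sit inside their respective fundamental segments and with the central position of $z_j$, this forces $\diam(\beta_{i(j),L}\cup\beta_{i(j),R})\le(1+K)\ell_\X(\alpha_i)+2\ell_\X(\tau)$, hence the improved per-crossing estimate $(|a_i|-1-K)\ell_\X(\alpha_i)-2\ell_\X(\tau)-L$. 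Summing this estimate over the at least $\lfloor m_i-(K\ell_\X(\alpha_i)+4\ell_\X(\tau))/W(\tau)\rfloor_0$ central crossings produces $\mathcal L_2^i$; the ordering $w_{j-1}\ll w_j$ along $\sigma$ from Theorem~\ref{Hyperbolic Length} ensures that $\ell_{T_\alpha\X}(\tau)$ is at least the sum of these per-crossing contributions, and summing over $i$ completes the estimate.

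The main obstacle will be justifying the improved diameter bound for central crossings: while each individual $\phi_j(\beta_{i(j),R})$ has diameter at most $\ell_\X(\alpha_i)$ and lies inside the fundamental segment $\overline{\beta_{i(1)}^R}$, confining the union $\beta_{i(j),L}\cup\beta_{i(j),R}$ inside a window of length $(1+K)\ell_\X(\alpha_i)+2\ell_\X(\tau)$ requires a careful interplay between the central position of $z_j$, the proximity $d(z_j,\phi_j(\beta_{i(j),R}))\le\ell_\X(\tau)$ from the $1$-Lipschitz projection, and the disjointness of the neighbor projections inside their respective fundamental segments.
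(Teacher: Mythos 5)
Your proposal correctly identifies the ingredients (Remarks~\ref{same or disjoint} and~\ref{all in segment}, the Collar Lemma, the $W(\tau)$-separation of the translated intersection points $z_j$, and the translation machinery of Theorem~\ref{Hyperbolic Length}), and the target $\sum_i\min\{\mathcal L_1^i,\mathcal L_2^i\}$ is in view. But the mechanism differs from the paper's in both branches, and in each branch a genuine gap remains.

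First, the paper does not prove $\mathcal L_1^i$ and $\mathcal L_2^i$ independently and then take their minimum; it organizes the argument, for each component $\alpha_i$, around a \emph{dichotomy}: either $\diam(\beta_{t,L}\cup\beta_{t,R})\le(2-K)\ell_\X(\alpha_i)+2\ell_\X(\tau)$ for \emph{every} crossing $t$ with $s(t)=i$ (Case~1, which gives $\mathcal L_1^i$ directly by repeating Theorem~\ref{Hyperbolic Length} with the improved $D_t$), or this fails at some crossing $\beta_t$ (Case~2, which gives $\mathcal L_2^i$). Your ``averaging against the aggregate constraint $\sum_j(\diam\phi_j(\beta_{i(j),L})+\diam\phi_j(\beta_{i(j),R}))\le 2\ell_\X(\alpha_i)$'' does not produce a \emph{uniform} per-crossing improvement: the aggregate bound is perfectly consistent with one crossing having $\diam(\beta_{t,L}\cup\beta_{t,R})$ close to $2\ell_\X(\alpha_i)+2\ell_\X(\tau)$. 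Averaging can at best control the sum, but the threshold function $\lfloor\cdot\rfloor_0$ is applied crossing-by-crossing, so you cannot push negative contributions past the floor by trading against positive ones without further work, and you would also need to account for repeated projections (multiplicities in $\sum_j$ are not controlled by the single fundamental segment). The case hypothesis is what makes $\mathcal L_1^i$ go through.

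Second, and this is the main gap you already sense: in Case~2 the paper's dichotomy between ``in vain'' and ``effective'' crossings is \emph{neighbor-based}, not position-based. The point is that once some $\beta_t$ has $\diam(\beta_{t,L}\cup\beta_{t,R})\ge(2-K)\ell_\X(\alpha_i)+2\ell_\X(\tau)$, both $\beta_{t,L}$ and $\beta_{t,R}$ are fat ($\ge(1-K)\ell_\X(\alpha_i)$), so by disjointness inside $\overline{\beta^R_{i(1)}}$ and $\overline{\beta^L_{i(1)}}$ the union of \emph{all other} right (resp.\ left) neighbor projections is confined to a residual interval of length $\le K\ell_\X(\alpha_i)$. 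A crossing is then ``in vain'' precisely when it sees both fat projections $\beta_{t,L},\beta_{t,R}$; every other crossing sees at least one thin projection, which immediately bounds its neighbor-union diameter by $(1+K)\ell_\X(\alpha_i)+2\ell_\X(\tau)$. Your ``peripheral/central'' split by the $z_j$'s position in the end-zones of $\overline{\beta^R_{i(1)}}$ does not obviously coincide with ``in vain/effective'': the in-vain crossings cluster near $\beta_{t,L}\cap\beta_{t,R}$, which sits wherever the fat projection $\beta_{t,R}$ does and need not be near the ends of the fundamental segment. Without additionally showing that a central $z_j$ must see a thin neighbor projection, the improved per-crossing estimate $(|a_i|-1-K)\ell_\X(\alpha_i)-2\ell_\X(\tau)-L$ does not follow from centrality plus the $1$-Lipschitz estimate alone; the argument you sketch only recovers the unimproved $\le 2\ell_\X(\alpha_i)+2\ell_\X(\tau)$ bound. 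This is exactly the obstacle you flag, and resolving it essentially forces you back to the paper's neighbor-based criterion.
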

\begin{proof}	
	We will use the similar notations and ideas from Theorem \ref{Hyperbolic Length}. Recall we denote $i(\tau, \alpha) = m = \sum_{i=1}^k i(\tau, \alpha_i) = \sum_{i=1}^k m_i$. And recall $\phi_\A(\tilde{\tau})$ is a bi-infinite geodesic in $\Z_\A$, and its edges are labeled by
	\begin{align*}
		\B = \{\cdots, \beta_{-2}, \beta_{-1}, \beta_0, \beta_1, \beta_2, \cdots\}.
	\end{align*}
	Define the index function $s\colon \mathbb{N} \to \{1, \cdots, k\}$ so that $\alpha_{s(t)} \in \A$ is the simple closed curve such that $\beta_t$ is a lift of $\alpha_{s(t)}$. Let $K \in (0,1)$, and we consider two different scenarios.
	
    If $\diam(\beta_{t,L} \cup \beta_{t,R}) \le (2-K) \ell_\X(\alpha_{s(t)}) +2\ell_\X(\tau)$ for all $t$, following the argument from Theorem \ref{Hyperbolic Length}, we can set
    \begin{align*}
        D_t = (|a_{s(t)}|-2+K) \ell_{\X}(\alpha_{s(t)}) - 2\ell_\X(\tau),
    \end{align*}
    and we have $d(\eta_{t,L}, \eta_{t,R}) \ge D_t$ for all $t$. Following the same equation (\ref{almost hyperbolic length lower bound}) and the same argument gives us the lower bound
	\begin{align*}
		\ell_{T_\alpha \X}(\tau) \ge \sum_{i=1}^k i(\tau, \alpha_i) \cdot \Bigl\lfloor{(|a_i|-2+K)\cdot \ell_{\X}(\alpha_i)-2\ell_\X(\tau) - L\Bigl\rfloor}_0.
	\end{align*}
	
	In the second scenario, we have $\diam(\beta_{t,L} \cup \beta_{t,R}) \ge (2-K) \ell_\X(\alpha_{s(t)}) +2\ell_\X(\tau)$ for some $t$. Let's $i = s(t)$ for simplicity. For this $t$, since both $\diam(\beta_{t,L})$ and $\diam(\beta_{t,R})$ are bounded by $\ell_\X(\alpha_{i})$ respectively, we have both
	\begin{align*}
	    \diam(\beta_{t,L}), \diam(\beta_{t,R }) \ge (1-K) \ell_\X(\alpha_{i}).
	\end{align*}
	Since the $\beta_{t,L}$ is exhausting an interval length of at least $(1-K) \ell_\X(\alpha_i)$, as we discussed above in Remark \ref{same or disjoint}, the diameter of the union of all other right neighbor projections is bounded by $\ell_\X(\alpha_{i}) - (1-K) \ell_\X(\alpha_{i})$, that is, $K \ell_\X(\alpha_{s(t)})$. Similarly,
	the diameter of the union of all left neighbor projections except $\beta_{t,L}$, is bounded by $K \ell_\X(\alpha_{i})$.
	
	Denote $i = s(t)$ and let $\beta_{i(1)}, \cdots, \beta_{i(m_i)}$ denote distinct lifts of $\alpha_{i}$ in $\B_m$ with $\beta_{i(1)} = \beta_t$, see Remark \ref{same or disjoint}. Define $X_i, Y_i, Z_i$ just as in Remark \ref{all in segment}. We say a $z_j$ is in vain if its left neighbor is $\beta_{t-1}$ and its right neighbor is $\beta_{t+1}$, and we say $z_j$ is effective otherwise. Notice any points in $Z_i$ is within distance $2 \ell_\X(\tau)$ of its left neighbor projection and its right neighbor projection, see the proof of Theorem \ref{Hyperbolic Length}. 
	
	If $\beta_{t, L} \cap \beta_{t, R}$ is empty, all points in vain lie in a geodesic segment of length $4 \ell_\X(\tau)$. If $\beta_{t, L} \cap \beta_{t, R}$ is nontrivial, since
	\begin{align*}
		(2-K) \ell_\X(\alpha_{i}) +2\ell_\X(\tau) \le \diam(\beta_{t,L} \cup \beta_{t,R}) \le 2 \ell_\X(\alpha_{i}) +2\ell_\X(\tau),
	\end{align*}
	we have $\diam(\beta_{t,L} \cap \beta_{t,R}) \le K \ell_\X(\alpha_{i})$, and all points in vain is within $2 \ell_\X(\tau)$-neighborhood of $\beta_{t, L} \cap \beta_{t, R}$. In any case, all points in vain can be arranged in a geodesic segment that has length bounded by $\diam(\beta_{i,L} \cap \beta_{i,R})+4\ell_\X(\tau)$. By Collar Lemma \ref{Collar Lemma}, there are at most $\frac{K \ell_\X(\alpha_{i}) + 4 \ell_\X(\tau)}{W(\tau)}$ many intersections points in vain. 
	
	\begin{figure}[h]
		\begin{center}
			\begin{tikzpicture}
				\node[anchor=south west,inner sep=0] at (0,0) {\includegraphics[scale=0.6]{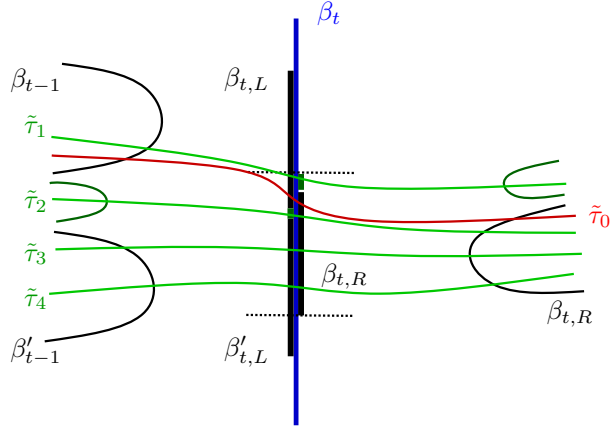}};
				\node at (3.8, 5.5){\textcolor{blue}{$\beta_t$}};
				\node at (2.7, 4.6){$\beta_{t,L}$};
				\node at (2.7, 1){$\beta_{t,L}'$};
				\node at (-0.1, 4.6){$\beta_{t-1}$};
				\node at (-0.1, 1){$\beta_{t-1}'$};
				\node at (4, 2){$\beta_{t,R}$};
				\node at (7, 1.5){$\beta_{t,R}$};
				\node at (-0.1, 4){\textcolor{OliveGreen}{$\tilde{\tau}_1$}};
				\node at (-0.1, 3){\textcolor{OliveGreen}{$\tilde{\tau}_2$}};
				\node at (-0.1, 2.3){\textcolor{OliveGreen}{$\tilde{\tau}_3$}};
				\node at (-0.1, 1.7){\textcolor{OliveGreen}{$\tilde{\tau}_4$}};
				\node at (7.4, 2.8){\textcolor{red}{$\tilde{\tau}_0$}};
			\end{tikzpicture}	
		\end{center}
		\caption{In between the two dotted lines is the geodesic segment $\overline{\beta^R_{t}}$. $\beta_{t-1}'$ and $\beta_{t-1}$ differ by $\psi_{\beta_t}$, thus their projections $\beta_{t, L}, \beta_{t, L}'$ differ by $\psi_{\beta_t}$. $\tilde{\tau}_0$ is in vain and thus is not ``counted'' in $T_{i}(K)$. Lifts like $\tilde{\tau}_1, \tilde{\tau}_2, \tilde{\tau}_3, \tilde{\tau}_4$ are effective and hence will realize a translation distance no less than $D_{i}(K)$ after twisting. $\tilde{\tau}_1$ is of case $(1)$, $\tilde{\tau}_2$, $\tilde{\tau}_3$, and $\tilde{\tau}_4$ is of case $(2)$.}
		\label{pic5}
	\end{figure}
	
	Thus, realized by $\alpha_i$ and $\beta_t$, there are at least $T_{i}(K)$ many effective intersection points, where
	\begin{align*}
		T_{i}(K) = \Bigl\lfloor i(\alpha_{i}, \tau) - \frac{K \ell_\X(\alpha_i) + 4 \ell_\X(\tau)}{W(\tau)} \Bigl\rfloor_0.
	\end{align*}
	For each effective intersection point $z_j$, it's of exactly one of the following cases.
	\begin{enumerate}
	    \item Its left neighbor is $\beta_{t-1}$ where its projection is $\beta_{t,L}$, and its right neighbor projection is being squeezed into an interval of length bounded by $K \ell_\X(\alpha_{i})$.
	    \item Its left neighbor projection is being squeezed into an interval of length bounded by $K \ell_\X(\alpha_{i})$, and its right neighbor is $\beta_{t+1}$ where its projection is $\beta_{t,L}$.
	    \item Both of its left and right neighbor projection are being squeezed into an interval of length bounded by $K \ell_\X(\alpha_{i})$ respectively.
	\end{enumerate}
	In any case, the diameter of the union of its left and right projection is bounded by 
	\begin{align*}
	(1+K)\ell_\X(\alpha_{i}) + 2\ell_\X(\tau) =	\max\left\{\ell_\X(\alpha_{i}) + K \ell_\X(\alpha_{i}) + 2\ell_\X(\tau), 2K \ell_\X(\alpha_{i}) + 2\ell_\X(\tau) \right\},
	\end{align*}
	which is like the upper bound for $\diam(\beta_{i,L} \cup \beta_{i,R})$ in the proof of Theorem \ref{Hyperbolic Length}. Apply the same argument from Theorem \ref{Hyperbolic Length} about ``$D_i$'', for any effective intersection point $z_j$, and for its corresponding $y_j$ and $\beta_{i(j)}$, it realizes a distance no less than  
	 \begin{align*}
	 	D_{i}(K) = \floor{(|a_{i}|-1-K)\cdot \ell_{\X}(\alpha_{i}) - 2\ell_\X(\tau) - L}_0
	 \end{align*}
 	after twisting.
	
 	For example, let's consider the situation in Figure \ref{pic5}. $z_0$ is the only one in vain and is not ``included'' in $T_{i}(K)$. $z_1, z_2$ realize translation distances no less than 
 	\begin{align*}
 		\floor{(|a_i|-1)\cdot \ell_{\X}(\alpha_{i}) - 2\ell_\X(\tau) - L}_0
 	\end{align*}
 	 after twisting, because the union of projections of their left and right neighbors are bounded by $\ell_{\X}(\alpha_{i})$. $z_3, z_4$ realize translation distances no less than 
 	 \begin{align*}
 	 	\floor{(|a_{i}|-1-K)\cdot \ell_{\X}(\alpha_{i}) - 2\ell_\X(\tau) - L}_0
 	 \end{align*} 
  	after twisting since the union of projections of their left and right neighbors are bounded by $(1+K)\ell_{\X}(\alpha_{i})$.
  	
 	Finally, following the same procedure from Theorem \ref{Hyperbolic Length} and only counting the sum of minimum distances realized by effective intersection points, we have
	\begin{align*}
		\ell_{T_\alpha \X}(\tau) \ge \sum_{i=1}^k T_i(K)  \cdot D_i(K).
	\end{align*}
	This gives us the desired result.
\end{proof}

Notice that the bounds in Proposition \ref{hyperbolic length not using}, Theorem \ref{Hyperbolic Length}, Proposition \ref{Hyperbolic Length again} involve both the lengths $\ell_\X(\alpha_i)$ and the intersection numbers $i(\tau, \alpha_i)$. Note also that the lower bounds we obtain are all vacuous in the case where the multicurve $\alpha$ is of mixed sign with all coefficients having absolute value 1. The following example shows that there cannot exist a general lower bound, on the order of $\sum_{i=1}^k i(\tau, \alpha_i) \ell_\X(\alpha_i)$ as in above results, that is effective in this case.

\begin{figure}[h]
	\begin{center}
		\begin{tikzpicture}
			\node[anchor=south west,inner sep=0] at (0,0) {\includegraphics[scale=0.4]{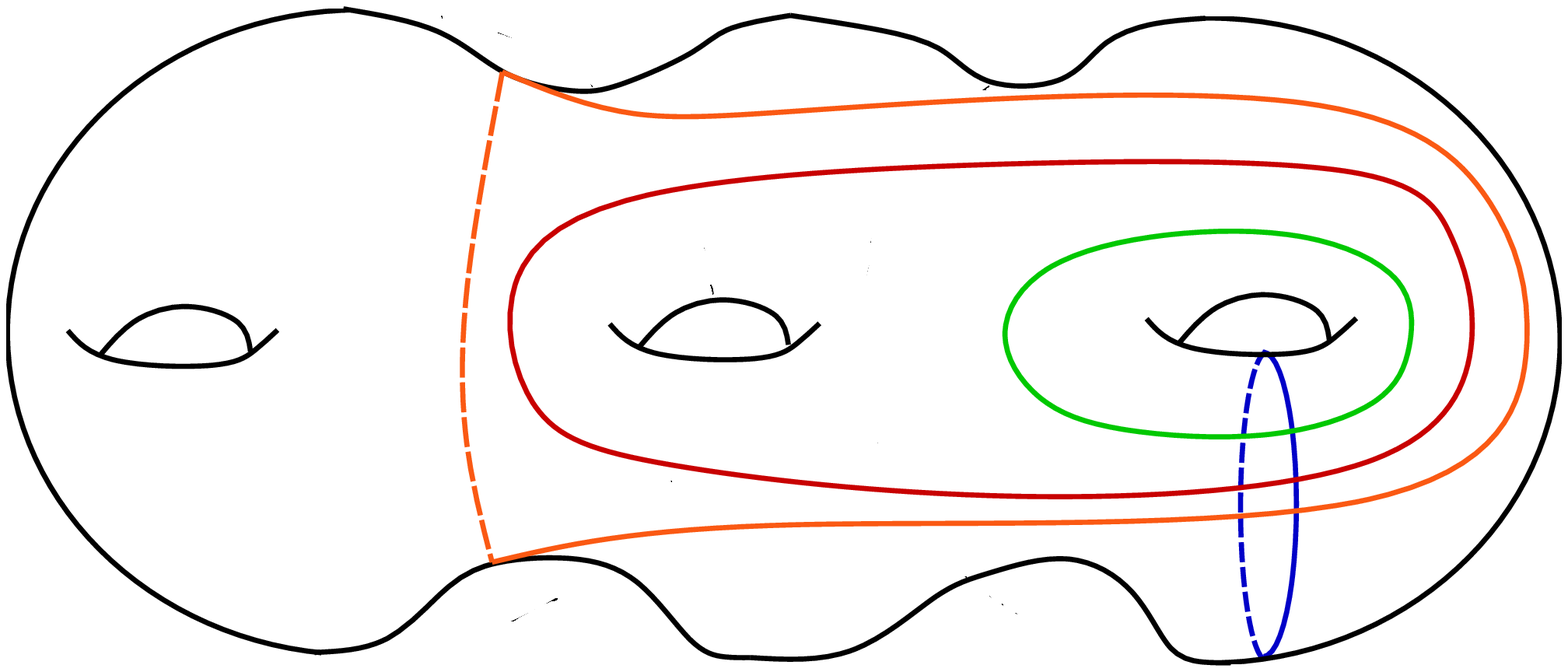}};
			\node at (2.2, 2.7){\textcolor{orange}{$\alpha$}};
			\node at (4.6, 2.3){\textcolor{red}{$\beta$}};
			\node at (5, 1.8){\textcolor{OliveGreen}{$\tau$}};
			\node at (7, 0.5){\textcolor{blue}{$\eta$}};
			
		\end{tikzpicture}	
	\end{center}
	\caption{}
	\label{pic6}
\end{figure}

\begin{example}\label{counterexample}
Consider simple closed curves $\alpha, \beta, \eta, \tau$ on a hyperbolic surface $\X$ as in the Figure \ref{pic6} above. We can define two sequences of simple closed curves
	\begin{align*}
		\alpha_i = T_\eta^i(\alpha), \beta_j = T_\eta^{j}(\beta),
	\end{align*}
and note that the lengths $\ell_\X(\alpha_n), \ell_\X(\beta_n)$ and intersection numbers $i(\tau, \alpha_n), i(\tau, \beta_n)$ all tend to infinity with $n$. Denote the multicurve $\gamma_n = \alpha_n-\beta_n$ for each $n$, then $\{\gamma_n\}_{n \in \mathbb{N}}$ is a sequence of multicurves with mixed sign and with each coefficient having absolute value equal to 1. Hence neither Proposition \ref{hyperbolic length not using}, Theorem \ref{Hyperbolic Length}, nor Proposition \ref{Hyperbolic Length again} provides an effective lower bound on $\ell_{T_{\gamma_n}\X}(\tau)$. One can use train tracks \cite{MR11410.2307/j.ctt1bd6knr4770} to study the images $T_{\gamma_n}(\tau)$ is, and then use length formula (\ref{length formula}) to verify that 
\begin{align*}
	\ell_{T_{\gamma_n}\X}(\tau) \stackrel{*}{\asymp} \ell_{\X}(\gamma_n) = \ell_{\X}(\alpha_n)+\ell_{\X}(\beta_n)
\end{align*} 
up to an uniform multiplicative error for all $n \in \mathbb{N}$. In particular, we notice the intersection numbers $i(\tau, \alpha_n), i(\tau, \beta_n)$, which go to infinity as $n$ goes to infinity, do not play any roles in the length of  $\ell_{\X}(\gamma_n)$. Thus there does not exist a constant $\lambda$ such that
\begin{align*}
    \ell_{T_{\gamma_n}\X}(\tau) \ge \lambda i(\tau, \alpha_n) \ell_{\X}(\alpha_n)+ \lambda i(\tau, \beta_n) \ell_{\X}(\beta_n).
\end{align*}
\end{example}

\section{Coarse distance} \label{Coarse distance}
In this section we will adopt the results from the previous section to establish a coarse distance formula, estimating how far a point in Teichm\"{u}ller space moves after applying a twist.

The lemma below provides a lower bound of the length of $\ell_{T_{\alpha} \X}(\kappa)$ where $\alpha$ is a multicurve of mixed sign where each coefficient has absolute value $\ge 2$, and $\kappa$ is a particular curve in the short marking $\mu_\X$.
\begin{lemma}\label{assumption on length and intersection number}
	Fix some $\epsilon > 0$, there exists $E, Q > 0$ depends on $\epsilon$ so the following holds true. Given any $\X \in \T_{g,n}^\epsilon$, let $\alpha = \sum_{i=1}^k a_i \alpha_i$ be a multicurve where each coefficient has absolute value $\ge 2$, and satisfying $\ell_{\X}(\alpha) \ge E$. Let $j$ denote an index such that $|a_j|\ell^2_\X(\alpha_{j}) = \max_{1 \le i \le k} |a_i|\ell^2_\X(\alpha_i)$. Let $\kappa \in \mu_\X$ be a marking curve such that $i(\alpha_j, \kappa) = \max_{\eta \in \mu_\X}i(\alpha_j, \eta)$, we have
	\begin{align}
		\ell_{T_\alpha \X}(\kappa) \ge \frac{1}{Q} \cdot i(\alpha_j, \kappa)|a_j|\ell_\X(\alpha_j).
	\end{align} 
\end{lemma}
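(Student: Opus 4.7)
The plan is to apply Proposition \ref{Hyperbolic Length again} with test curve $\tau = \kappa$ and retain only the $j$-th summand, so that
\[
\ell_{T_\alpha \X}(\kappa) \geq \min\{\mathcal{L}_1^j, \mathcal{L}_2^j\}.
\]
Since $|a_j| \geq 2$ and $K \in (0,1)$, elementary estimates give $(|a_j|-2+K)/|a_j| \geq K/2$ and $(|a_j|-1-K)/|a_j| \geq (1-K)/2$, so the hyperbolic-length factor inside both $\mathcal{L}_1^j$ and $\mathcal{L}_2^j$ is proportional to $|a_j|\ell_\X(\alpha_j)$ up to the additive error $2\ell_\X(\kappa) + L \leq 2N + L$.

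For the intersection factor I use two ingredients. First, by (\ref{length formula}) and the choice of $\kappa$ maximizing $i(\alpha_j, \cdot)$ over the finite set $\mu_\X$,
\[
i(\kappa, \alpha_j) \geq \frac{\ell_\X(\alpha_j)}{2(3g-3+n)\, CN},
\]
so $i(\kappa, \alpha_j)$ is comparable to $\ell_\X(\alpha_j)$ with a constant depending only on $\epsilon$ and $S_{g,n}$. Second, since $\ell_\X(\kappa) \leq N$, the Collar Lemma \ref{Collar Lemma} yields a uniform lower bound $W(\kappa) \geq W_0 > 0$ depending only on $\epsilon$. I then choose $K$ small enough that $K/W_0$ is much smaller than $1/(2(3g-3+n)CN)$; this makes the subtracted term $K\ell_\X(\alpha_j)/W(\kappa)$ inside $\mathcal{L}_2^j$ at most $\tfrac{1}{4} i(\kappa, \alpha_j)$, leaving only a uniform additive constant $4\ell_\X(\kappa)/W(\kappa) \leq 4N/W_0$ to handle.

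The role of the hypothesis $\ell_\X(\alpha) \geq E$ is to absorb all remaining uniform additive constants, and the choice of the index $j$ is what makes this work. Using $\ell_\X(\alpha_i) \geq \epsilon$ for every $i$ together with the maximality of $|a_j|\ell_\X^2(\alpha_j)$,
\[
E \leq \ell_\X(\alpha) = \sum_i |a_i|\ell_\X(\alpha_i) \leq \frac{k}{\epsilon}\, |a_j|\ell_\X^2(\alpha_j),
\]
so $|a_j|\ell_\X^2(\alpha_j) \geq \epsilon E/k$. Combined with $|a_j| \geq 2$ and $\ell_\X(\alpha_j) \geq \epsilon$, a short case analysis (on whether $\ell_\X(\alpha_j)$ is small or large) forces $|a_j|\ell_\X(\alpha_j)$ to grow with $E$. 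Taking $E$ sufficiently large thus makes $|a_j|\ell_\X(\alpha_j)$ dominate the additive error $2N+L$, and simultaneously makes $i(\kappa, \alpha_j)$ dominate the residual constant $4N/W_0$, producing both $\mathcal{L}_1^j$ and $\mathcal{L}_2^j$ of the desired form $\tfrac{1}{Q} \cdot i(\kappa, \alpha_j)\, |a_j|\, \ell_\X(\alpha_j)$.

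The main obstacle is the coordinated choice of the two parameters $K$ and $E$. The parameter $K$ must be fixed first, small enough (in terms of $\epsilon$ and $S_{g,n}$) so that $K/W_0$ is dwarfed by the Collar/length-formula constant; only then can $E$ be selected to absorb the resulting uniform additive constants. The delicate point is that the lemma selects $j$ by maximizing $|a_j|\ell_\X^2(\alpha_j)$ rather than $|a_j|\ell_\X(\alpha_j)$, and one has to transfer a global lower bound on $\ell_\X(\alpha)$ into an effective lower bound on the single quantity $|a_j|\ell_\X(\alpha_j)$ via the thick-part hypothesis $\ell_\X(\alpha_i) \geq \epsilon$.
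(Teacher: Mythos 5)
Your proposal is correct and follows essentially the same route as the paper: apply the section 3 length lower bounds with $\tau=\kappa$, keep only the $j$-th summand, use the length formula (\ref{length formula}) to make $i(\kappa,\alpha_j)$ comparable to $\ell_\X(\alpha_j)$, fix $K$ small relative to the collar width of $\kappa$, and take $E$ large (via $|a_j|\ell_\X^2(\alpha_j)\ge \epsilon E/k$) to absorb the remaining additive constants. The only difference is stylistic: the paper splits into $|a_j|\ge 3$ (handled via Theorem~\ref{Hyperbolic Length} with the cleaner bound $|a_j|-2\ge\frac{1}{3}|a_j|$) and $|a_j|=2$ (handled via Proposition~\ref{Hyperbolic Length again}), whereas you apply Proposition~\ref{Hyperbolic Length again} uniformly using $(|a_j|-2+K)/|a_j|\ge K/2$ and $(|a_j|-1-K)/|a_j|\ge (1-K)/2$, which is a minor unification at the cost of a slightly worse constant $Q$ and does not affect the validity of the argument.
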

\begin{proof}
	Since $k \le \frac{h}{2}, \ell_\X(\alpha) \ge E$ and since any simple closed curve has length $\ge \epsilon$, we have 
	\begin{align*}
		|a_j|\ell_\X(\alpha_j) \ge \sqrt{|a_j|^2\ell^2_\X(\alpha_j)} \ge \sqrt{|a_j|\ell^2_\X(\alpha_j)} \ge \sqrt{\frac{2 \epsilon E}{h}}.
	\end{align*}

	First, we consider the case $|a_j| \ge 3$. Since $|a_j|-2 \ge \frac{1}{3}|a_j|$ for any $|a_j| \ge 3$, by assuming $E \ge \frac{18h(2N+L)^2}{\epsilon}$, we have
	\begin{align*}
		&(|a_j|-2)\ell_\X(\alpha_j) \ge \frac{1}{3} |a_j|\ell_\X(\alpha_j) \ge \frac{1}{3} \cdot \sqrt{\frac{\epsilon E}{k}} \ge 2(2N+L), \\ &(|a_j|-2)\ell_\X(\alpha_j) -2N - L \ge \frac{1}{6} |a_j| \ell_\X(\alpha_j).
	\end{align*}
	Thus for this particular $j$ and $\kappa$, since $\ell_\X(\kappa) \le N$ we have
	\begin{align*}
		i(\alpha_j, \kappa) \left[ (|a_j|-2)\ell_\X(\alpha_j) -2\ell_\X(\kappa) -L \right] \ge \frac{1}{6} \cdot i(\alpha_j, \kappa)|a_j|\ell_\X(\alpha_j).
	\end{align*} 
	Apply Theorem \ref{Hyperbolic Length}, we conclude
	\begin{align*}
		\ell_{T_\alpha \X}(\kappa) \ge \frac{1}{6} \cdot i(\alpha_j, \kappa)|a_j|\ell_\X(\alpha_j).
	\end{align*}

	Now, we consider the case $|a_j| = 2$. Recall by fixing $\epsilon$, any short marking curve corresponding to any $\X \in \T_{g,n}^\epsilon$ has length bounded on top by $N$. By Collar Lemma \ref{Collar Lemma}, there exists a $W$ depends on $\epsilon$ such that collar width of any short marking curve corresponding to any $\X \in \T_{g,n}^\epsilon$ has length bounded below by $W$. This means we have
	\begin{align*}
	\frac{K \ell_\X(\alpha_{j}) + 4 \ell_\X(\kappa)}{W(\kappa)} \le \frac{hKCNi(\alpha_{j}, \kappa) + 4N}{W}.
	\end{align*}
	Since we have $\ell_\X(\alpha_j) \ge \frac{1}{2}\sqrt{\frac{2\epsilon E}{h}}$, by length formula (\ref{length formula}),
	\begin{align*}
	i(\alpha_j, \kappa) \ge \frac{\ell(\alpha_j)}{hCN} \ge \frac{1}{hCN}\sqrt{\frac{\epsilon E}{2h}}.
	\end{align*}
	Let $E \ge \frac{32hN^2}{\epsilon K^2}$ so that
	\begin{align*}
	    hKCN i(\alpha_j, \kappa) \ge 4N,
	\end{align*}
	and take the constant $K = \min\{\frac{W}{4hCN}, \frac{1}{2}\}$, we have
	\begin{align}
	\frac{hKCNi(\alpha_{j}, \kappa) + 4N}{W} \le \frac{2hCNK}{W} i(\alpha_{j}, \kappa) \le \frac{1}{2} i(\alpha_{j}, \kappa). \label{mess 1}
	\end{align}
	Moreover, by further assuming $E \ge \frac{12^2h(2N+L)^2}{2\epsilon}$, we have $\ell_{\X}(\alpha_j) \ge 6(2N+L)$. Since $|a_j| = 2, K \le \frac{1}{2}$, we have
	\begin{align}
		(|a_j|-1-K)\cdot \ell_{\X}(\alpha_i)-2N - L \ge \frac{1}{2}\ell_{\X}(\alpha_j)-2N-L \ge \frac{1}{3}\ell_{\X}(\alpha_j). \label{mess 2}
	\end{align}
	Now we can apply Proposition \ref{Hyperbolic Length again}. In the $\mathcal L_1$ case, $|a_j| = 2$ and the lower bound is
	\begin{align*}
		\ell_{T_{\alpha} \X}(\kappa) \ge i(\alpha_j, \kappa) \left(K\ell_{\X}(\alpha_{j}) - 2N - L\right).
	\end{align*}
	By further assuming
	\begin{align*}
	    E \ge \max\left\{\frac{12^2h(2N+L)^2}{2\epsilon}, \frac{h}{2\epsilon}\left(\frac{16hCN(2N+L)}{W}\right)^2\right\},
	\end{align*}
	we have
	\begin{align*}
	    &K\ell_{\X}(\alpha_{j}) - 2N - L \ge \frac{1}{2}K\ell_{\X}(\alpha_{j}), \\
		&\ell_{T_{\alpha} \X}(\kappa) \ge \min\left\{\frac{W}{8hCN}, \frac{1}{4}\right\} \cdot i(\alpha_j, \kappa) \ell_{\X}(\alpha_{j}).
	\end{align*}
	In the $\mathcal L_2$ case, our previous formulas (\ref{mess 1}), (\ref{mess 2}) guarantee
	\begin{align*}
		\ell_{T_{\alpha} \X}(\tau) &\ge \Big\lfloor i(\kappa, \alpha_j) - \frac{K \ell_\X(\alpha_{j}) + 2 \ell_\X(\kappa)}{W(\kappa)} \Big\rfloor_0 \cdot  \Big\lfloor(|a_j|-1-K)\cdot \ell_{\X}(\alpha_j)-2\ell_\X(\kappa) - L \Big\rfloor_0 \\
		&\ge \frac{1}{2} i(\kappa, \alpha_{j}) \cdot \frac{1}{3} \ell_{\X}(\alpha_{j}) \ge \frac{1}{6} \cdot i(\kappa, \alpha_{j}) \ell_{\X}(\alpha_{j}).
	\end{align*}
	Let
	\begin{align*}
		&E = \max\left\{\frac{12^2h(2N+L)^2}{\epsilon}, \frac{32hN^2}{\epsilon K^2}, \frac{h}{2\epsilon}\left(\frac{16hCN(2N+L)}{W}\right)^2\right\}, \\
		&Q = \min\left\{\frac{1}{6}, \frac{W}{8hCN}\right\}.
	\end{align*}
	The result follows.
\end{proof}

Now we are ready to prove the main result of this section.

\begin{theorem}[Coarse Distance Formula]\label{Coarse Distance Formula}
	Fix some $S_{g,n}$ and given any $\epsilon > 0$, there exists a constant $H > 0$ such that given any $\X \in \T_{g,n}^\epsilon$, we have
	\begin{align} \label{coarse distance formula}
		d_\T(\X, T_\alpha \X) \stackrel{+H}{\asymp} \log\left(\sum_{i=1}^{k} |a_i|\ell_\X^2(\alpha_i)\right)
	\end{align}
	for any $\alpha = \sum_{i=1}^k a_i \alpha_i \in \ML^*(\mathbb Z)$.
\end{theorem}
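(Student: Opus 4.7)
The plan is to apply the Choi--Rafi Distance Formula (Theorem \ref{Distance Formula}) with $\Y = T_\alpha \X$, reducing the coarse distance comparison to an estimate of $\max_{\gamma \in \mu_\X} \log \ell_{T_\alpha\X}(\gamma)/\ell_\X(\gamma)$. Since every short-marking curve of a point in $\T_{g,n}^\epsilon$ has length in $[\epsilon, N]$, this further reduces, up to additive error, to proving
\begin{align*}
\max_{\gamma \in \mu_\X} \ell_{T_\alpha\X}(\gamma) \stackrel{*}{\asymp} \sum_{i=1}^k |a_i| \ell_\X^2(\alpha_i)
\end{align*}
with multiplicative constants depending only on $\epsilon$ and $S_{g,n}$.

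For the upper bound I would apply the upper inequality in Theorem \ref{Hyperbolic Length} to each $\gamma \in \mu_\X$, giving $\ell_{T_\alpha\X}(\gamma) \le \ell_\X(\gamma) + \sum_i i(\alpha_i, \gamma) |a_i| \ell_\X(\alpha_i)$, and then invoke the length formula (\ref{length formula}) in the form $i(\alpha_i, \gamma) \le (C/\epsilon) \ell_\X(\alpha_i)$, valid since $\ell_\X(\bar\eta) \ge \epsilon$ for $\bar\eta \in \mu_\X$. This yields $\max_\gamma \ell_{T_\alpha\X}(\gamma) \le N + (C/\epsilon) \sum_i |a_i| \ell_\X^2(\alpha_i)$, and taking logarithms gives the upper half of (\ref{coarse distance formula}) whenever $\sum_i |a_i|\ell_\X^2(\alpha_i)$ exceeds a uniform constant, with the residual case absorbed into $H$.

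For the lower bound I would select $j$ with $|a_j|\ell_\X^2(\alpha_j) = \max_i |a_i|\ell_\X^2(\alpha_i)$, so that $\sum_i |a_i|\ell_\X^2(\alpha_i) \le (h/2)|a_j|\ell_\X^2(\alpha_j)$, and then choose $\kappa \in \mu_\X$ realizing $\max_{\eta \in \mu_\X} i(\alpha_j, \eta)$. Since $|\mu_\X|$ is bounded in terms of $h$ and $\ell_\X(\bar\eta) \le N$, the length formula (\ref{length formula}) forces $i(\alpha_j, \kappa) \gtrsim \ell_\X(\alpha_j)$, and it suffices to prove $\ell_{T_\alpha\X}(\kappa) \gtrsim |a_j|\ell_\X^2(\alpha_j)$. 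The three cases defining $\ML^*(\mathbb{Z})$ now determine the tool: when $\alpha$ is positive or negative (cases (1) and (2)) I apply the sharpened lower bound of Proposition \ref{hyperbolic length not using} to obtain $\ell_{T_\alpha\X}(\kappa) \ge A^{-1}\bigl(|a_j| i(\alpha_j,\kappa)\ell_\X(\alpha_j) - \ell_\X(\kappa)\bigr) \gtrsim |a_j|\ell_\X^2(\alpha_j)$; when $\alpha$ has mixed sign with all $|a_i|\ge 2$ (case (3)) and $\ell_\X(\alpha) \ge E$, Lemma \ref{assumption on length and intersection number} gives $\ell_{T_\alpha\X}(\kappa) \ge Q^{-1} i(\alpha_j,\kappa) |a_j| \ell_\X(\alpha_j) \gtrsim |a_j|\ell_\X^2(\alpha_j)$. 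The remaining regime in case (3), where $\ell_\X(\alpha) < E$, forces $|a_i|\ell_\X(\alpha_i) < E$ and hence $\ell_\X(\alpha_i) < E$ for every $i$, so $\sum_i |a_i|\ell_\X^2(\alpha_i) < (h/2) E^2$ and both sides of (\ref{coarse distance formula}) are uniformly bounded and thus swallowed by $H$.

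The main obstacle will be the careful bookkeeping of the constants across the three cases, especially in case (3), where Lemma \ref{assumption on length and intersection number} must be invoked only after checking its hypotheses $|a_i| \ge 2$ and $\ell_\X(\alpha) \ge E$, and where the residual small-$\ell_\X(\alpha)$ regime must be absorbed into $H$; the positive/negative cases and the upper bound reduce cleanly to the length formula (\ref{length formula}) and Theorem \ref{Hyperbolic Length}.
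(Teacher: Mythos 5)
Your proposal is correct and follows essentially the same route as the paper: the Choi--Rafi distance formula reduces the problem to estimating $\max_{\gamma\in\mu_\X}\ell_{T_\alpha\X}(\gamma)$, the upper bound comes from Theorem~\ref{Hyperbolic Length} plus the length formula, and the lower bound splits by cases of $\ML^*(\mathbb Z)$ using the sharpened bound of Proposition~\ref{hyperbolic length not using} for positive/negative $\alpha$ and Lemma~\ref{assumption on length and intersection number} for mixed sign with $|a_i|\ge 2$, with the small-$\ell_\X(\alpha)$ regime absorbed into $H$. One small correction to your closing remark: the positive/negative lower bound also requires $\ell_\X(\alpha)\ge E$ (or an equivalent largeness hypothesis) to make the subtracted $\ell_\X(\kappa)$ term harmless, so the small-length absorption applies uniformly to all cases, exactly as the paper arranges it.
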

\begin{proof}
	By the distance formula (\ref{distance formula}) and our formula (\ref{estimate hyerbolic length}), we have
	\begin{align*}
		d_\T(\X, T_\alpha \X) 
		&\le \log \left(\frac{e^c}{\epsilon} \max_{\tau \in \mu_\X}\ell_{T_\alpha \X}(\tau)\right) \\
		&\le \log \left( \frac{Ne^c}{\epsilon} + \frac{e^c}{\epsilon} \cdot \max_{\tau \in \mu_\X} \sum_{i=1}^k i(\tau, \alpha_i)|a_i| \ell_\X(\alpha_i) \right) \\
		&\le \log \left( \frac{Ne^c}{\epsilon} + \frac{e^c}{\epsilon} \cdot \sum_{\tau \in \mu_\X}\sum_{i=1}^k i(\tau, \alpha_i)|a_i| \ell_\X(\alpha_i) \right) \\
		&= \log \left( \frac{Ne^c}{\epsilon} + \frac{e^c}{\epsilon} \cdot\sum_{i=1}^k \left(\sum_{\tau \in \mu_\X} i(\tau, \alpha_i)\right) |a_i| \ell_\X(\alpha_i) \right) \\
		&\le \log \left( \frac{Ne^c}{\epsilon} + \frac{e^c C}{\epsilon^2} \cdot\sum_{i=1}^k |a_i| \ell^2_\X(\alpha_i) \right)
	\end{align*}
	where last inequality holds by applying the length formula (\ref{length formula}). Since we always have $|a_i| \ell^2_\X(\alpha_i) \ge \epsilon^2$, by using equality $\log(a+b) = \log(1+\frac{a}{b}) + \log(b)$ we have
	\begin{align*}
		d_\T(\X, T_\alpha \X) &\le \log\left(1+ \frac{\frac{Ne^c}{\epsilon}}{\frac{e^c C}{\epsilon^2} \cdot\sum_{i=1}^k |a_i| \ell^2_\X(\alpha_i)}\right) + \log\left(\frac{e^c C}{\epsilon^2} \cdot\sum_{i=1}^k |a_i| \ell^2_\X(\alpha_i)\right) \\
		&\le \log\left(1+\frac{N}{h \epsilon C}\right) + \log\left(\frac{e^c C}{\epsilon^2}\right) + \log\left(\sum_{i=1}^k |a_i| \ell^2_\X(\alpha_i)\right).
	\end{align*}
	This gives us the upper bound in (\ref{coarse distance formula}) after setting appropriate $H$.
	
	Now we work toward the lower bound in (\ref{coarse distance formula}). Let's assume that $\ell_\X(\alpha) \ge E$, $E$ from Lemma \ref{assumption on length and intersection number}. First, we consider the case that $\alpha$ is positive or negative, i.e., all coefficients have the same sign. In this case, by applying Proposition \ref{hyperbolic length not using}, similar to the argument obtaining the upper bound, we have
	\begin{align*}
			d_\T(\X, T_\alpha \X) &\ge \log \left( \frac{1}{Ne^c} \max_{\tau \in \mu_\X} \ell_{T_{\alpha} \X}(\tau) \right)  \\
		 &\ge \log \left( \frac{1}{ANe^c} \right) + \log \left( \max_{\tau \in \mu_\X}  \sum_{i=1}^k |a_i| i(\alpha_i, \tau) \ell_\X(\alpha_i)-N  \right) \\
		 &\ge \log \left( \sum_{i=1}^k |a_i| \ell_{X}^2(\alpha_{i}) \right) - \log \left(2hACN^2e^c \right)
	\end{align*}
	
	Now, we consider the case where $\alpha$ is of mixed sign where each coefficient has absolute value $\ge 2$. Let $j$ be the index of $\alpha$ such that $|a_j|\ell_\X^2(\alpha_{j}) = \max_{1 \le i \le k} |a_i|\ell_\X^2(\alpha_i)$ and let $\kappa \in \mu_\X$ be the curve realizes $\max_{\eta \in \mu_\X} i(\alpha_j, \eta)$, then by previous Lemma \ref{assumption on length and intersection number} we have
	\begin{align*}
		\max_{\eta \in \mu_\X} \ell_{T_{\alpha} \X}(\eta) \ge \ell_{T_{\alpha} \X}(\kappa) \ge \frac{1}{Q} \cdot  i(\alpha_j, \kappa)|a_j|\ell_\X(\alpha_j).
	\end{align*}
	Apply the length formula (\ref{length formula}), we have
	\begin{align*}
		\max_{\eta \in \mu_\X} \ell_{T_{\alpha} \X}(\eta) & \ge \frac{1}{Q} i(\alpha_j, \kappa)|a_j|\ell_\X(\alpha_j) = \frac{1}{Q} \max_{\eta \in \mu_\X} i(\alpha_j, \eta)|a_j|\ell_\X(\alpha_j) \\ &\ge \frac{1}{Q}\frac{\sum_{\eta \in \mu_\X} i(\alpha_j, \eta)}{h}|a_j|\ell_\X(\alpha_j) \ge \frac{1}{hQCN}|a_j|\ell_\X^2(\alpha_j) \\ &\ge \frac{1}{hQCN} \max_{1 \le i \le k} |a_i|\ell_\X^2(\alpha_i).
	\end{align*}
	Apply the distance formula (\ref{distance formula}), we have
	\begin{align*}
		d_\T(\X, T_\alpha \X) &\ge \log\left(\frac{1}{hQCN^2e^c} \max_{1 \le i \le k} |a_i|\ell_\X^2(\alpha_i)\right) \\
		&\ge \log\left(\frac{1}{h^2QCN^2e^c} \sum_{i=1}^{k} |a_i|\ell_\X^2(\alpha_i)\right) \\
		&=\log\left(\sum_{i=1}^{k} |a_i|\ell_\X^2(\alpha_i)\right)-\log\left(h^2QCN^2e^c\right).
	\end{align*}
	And this gives us the lower bound in (\ref{coarse distance formula}) after setting appropriate $H$. Notice the above holds true for any $\alpha \in \ML^*_{\ge E}(\mathbb{Z})$. For any $\alpha \in \ML(\mathbb Z)$ that has length bounded by $E$, we have $ \log\left(\sum_{i=1}^{k} |a_i|\ell_\X^2(\alpha_i)\right) \le \log\left(\ell_\X^2(\alpha)\right) \le 2\log(E)$.
	
	Finally, we set
	\begin{align*}
		H = \max \{&\log\left(1+\frac{N}{h \epsilon C}\right) + \log\left(\frac{e^c C}{\epsilon^2}\right), \\
		&\log \left(2hACN^2e^c \right), \log\left(h^2QCN^2e^c\right), 2\log(E)\}.
	\end{align*} 
	The result follows.
\end{proof}

\begin{remark}\label{counterexample to coarse distance formula}
	Consider Example \ref{counterexample}, if $\tau$ is chosen to be a short marking curve, using similar idea in Theorem \ref{Coarse Distance Formula}, we would have
	\begin{align*}
		d_\T(\X, T_{\gamma_n} \X) \stackrel{+H}{\asymp} \log\left(\ell_{\X}(\alpha_n)+\ell_{\X}(\beta_n)\right)
	\end{align*}
	for some $H$. This implies our coarse distance formula does not hold for this sequence of multicurves $\{\gamma_n\} \subset \ML(\mathbb{Z}) \setminus \ML^*(\mathbb{Z})$.
\end{remark}

\section{Proof of theorem \ref{Main Theorem} and corollary \ref{Main Corollary}} \label{Proof of the Main Theorem}
Assume the conditions in Theorem \ref{Main Theorem}, let $D$ be one of the three sets $D(\mathcal S)$, $M(\mathcal S)$, $D(\ML^*(\gamma))$. Since any mapping class is an isometry for $\T_{g,n}$, given any $\X, \Y \in  \T_{g,n}^\epsilon$, we notice
\begin{align}\label{balls in balls}
|D \cdot \X \cap B_{R-d_\T(\X, \Y)}(\X)| \le |D \cdot \Y \cap B_R(\X)| \le |D \cdot \X \cap B_{R+d_\T(\X,\Y)}(\X)|.
\end{align}
Also, recall that
\begin{align*}
D \cdot \X \cap B_R(\X) = \{ g \cdot \X \in \T_{g,n} \mid g \in D, d_\T(g \cdot \X, \X) \le R\}.
\end{align*}
Since twists never stabilize any point in $\T_{g,n}$, we have
\begin{align*}
|D \cdot \X \cap B_R(\X)| = |\{g \in D \mid d_\T(g \cdot \X, \X) \le R\}|.
\end{align*}
For simplicity of notation, we use $t = \frac{h}{2}$ to denote half the dimension of $\T_{g,n}$. Recall we say $f(R) \stackrel{*A}\sim g(R)$ if for any $\lambda > 1$, there exists a $M(\lambda)$ such that $\frac{1}{\lambda A} \le \frac{g(R)}{f(R)} \le \lambda A$ for any $R \ge M(\lambda)$. We are now ready to prove Theorem \ref{Main Theorem} and Corollary \ref{Main Corollary}. For each case and for any $\lambda >1$, we will compute the corresponding $M(\lambda)$.
\begin{proof}[Proof of Theorem \ref{Main Theorem}]
Let $\gamma = \sum_{i=1}^k a_i \gamma_i$ be a multicurve, then $c_\alpha = c_\gamma$ for any $\alpha \in \ML^*(\gamma)$. We consider the corresponding set of twists around curves of topological type $\gamma$
\begin{align*}
D(\ML^*(\gamma)) = \{T_\alpha \mid \alpha \in \ML^*(\gamma) \}.
\end{align*}
Define
\begin{align*}
& \mathcal S_R^\pm = \left\{\alpha = \sum_{i=1}^k a_i \alpha_i \in \ML^*(\gamma) \mid \sum_{i=1}^k |a_i| \ell^2_{\X}(\alpha_i) \le e^{R \pm H}\right\}, \\
& \mathcal S_R^{++} = \left\{\alpha = \sum_{i=1}^k a_i \alpha_i \in \ML^*(\gamma) \mid \ell_\X(\alpha) \le \sqrt{c_\alpha} \cdot e^{(R+H)/2}\right\}, \\
& \mathcal S_R^{--} = \left\{\alpha = \sum_{i=1}^k a_i \alpha_i \in \ML^*(\gamma) \mid \ell_\X(\alpha) \le e^{(R-H)/2}\right\}.
\end{align*}
By the coarse distance formula (\ref{coarse distance formula}), we have 
\begin{align*}
|\mathcal S_R^-| \le |D(\ML^*(\gamma)) \cdot \X \cap B_R(\X)| \le |\mathcal S_R^+|.
\end{align*}
Since
\begin{align*}
\sum_{i=1}^k |a_i| \ell^2_{\X}(\alpha_i) \le \sum_{i=1}^k |a_i|^2 \ell^2_{\X}(\alpha_i) \le (\ell_\X(\alpha))^2 
\end{align*}
we have $\mathcal S_R^{--} \subset \mathcal S_R^-$. Moreover, by Schwartz inequality, we have
\begin{align}\label{bounding relation 1} 
(\ell_\X(\alpha))^2 = (\sum_{i=1}^k |a_i| \ell_\X(\alpha_i))^2 \le  (\sum_{i=1}^k |a_i|) \cdot \sum_{i=1}^k |a_i| \ell^2_\X(\alpha_i) = c_\alpha \cdot \sum_{i=1}^k |a_i| \ell^2_\X(\alpha_i)
\end{align}
so $\mathcal S_R^{+} \subset \mathcal S_R^{++}$. Together this means
\begin{align*}
|\mathcal S_R^{--}| \le |D(\ML^*(\gamma)) \cdot \X \cap B_R(\X)| \le |\mathcal S_R^{++}|.
\end{align*}
Mirzakhani's counting formula (\ref{repharse counting formula}) tells us for any $\lambda > 1$, we have
\begin{align*}
&|\mathcal S_R^{--}| = s_X(e^{(R-H)/2}, \gamma) \ge \frac{1}{\lambda} \cdot n_X( \gamma) \cdot e^{t(R-H)}, \\
&|\mathcal S_R^{++}| = s_X(\sqrt{c_\alpha} \cdot e^{(R+H)/2}, \gamma) \le \lambda \cdot n_X( \gamma) \cdot c_\gamma^t \cdot e^{t(R+H)}
\end{align*}
whenever $R \ge M(\lambda) = 2\log(r_X(\gamma, \lambda))+H$. This means
\begin{align*}
\frac{1}{\lambda} \cdot n_X(\gamma) \cdot e^{t(R-H)} \le |D(\ML^*(\gamma)) \cdot \X \cap B_R(\X)| \le \lambda \cdot c_\gamma^t \cdot n_X(\gamma) \cdot e^{t(R+H)}  
\end{align*}
whenever $R \ge M(\lambda)$. By equation (\ref{balls in balls}), we have
\begin{align*}
\frac{1}{\lambda \cdot e^{t(d_\T(\X,\Y)+H)}} \le \frac{|D(\ML^*(\gamma)) \cdot \Y \cap B_R(\X)|}{n_X(\gamma)e^{t R}} \le \lambda \cdot c_\gamma^t  \cdot e^{t(d_\T(\X,\Y)+H)}
\end{align*}
whenever $R \ge M(\lambda)$. Recall that we denote $F_\gamma(\X,\Y) =(c_\gamma)^t e^{t d_\T(\X,\Y)}$. By setting $J = e^{tH}$, we are done with the case $D=D(\ML^*(\gamma))$ and Theorem \ref{Main Theorem} follows.
\end{proof}

\begin{proof}[Proof of Corollary \ref{Main Corollary}]
We observe in the above proof of Theorem \ref{Main Theorem}, when $\gamma$ is a simple closed curve, $c_\gamma =1$ and for $R \ge 2\log(r_X(\gamma, \lambda))+H$ we have
\begin{align*}
\frac{1}{\lambda \cdot e^{t(d_\T(\X,\Y)+H)}} \le \frac{|D(\mathcal S(\gamma)) \cdot \Y \cap B_R(\X)|}{n_X(\gamma)e^{t R}} \le \lambda \cdot e^{t(d_\T(\X,\Y)+H)}.
\end{align*}
Summing up all the topological types of simple closed curves, we have
\begin{align*}
\frac{1}{\lambda \cdot e^{t(d_\T(\X,\Y)+H)}} \le \frac{|D(\mathcal S) \cdot \Y \cap B_R(\X)|}{n_X(\mathcal S) e^{t R}} \le \lambda \cdot e^{t(d_\T(\X,\Y)+H)}
\end{align*}
whenever $R \ge M(\lambda) = 2\log(r_X(\mathcal S, \lambda))+H$. Recall we denote $F(\X,\Y) = e^{t d_\T(\X,\Y)}$ and $J = e^{tH}$. Thus we are done with the case $D=D(\mathcal S)$ and the first result of Corollary \ref{Main Corollary} follows.

Now, we consider the set of Dehn twists with powers
\begin{align*}
M(\mathcal S) = \{T_\alpha^n \mid \alpha \in \mathcal S, n \in \mathbb{Z}\setminus\{0\} \}.
\end{align*}
Define
\begin{align*}
& M_R^\pm = \left\{T^n_{\alpha} \in M(\mathcal S) \mid |n|\ell^2_{\X}(\alpha) \le e^{R \pm H} \right\},  \\
& \mathcal S^\pm_{R,n} = \left\{\alpha \in \mathcal S \mid \ell_{\X}(\alpha) \le \frac{e^{(R \pm H)/2}}{\sqrt{|n|}}\right\} 
\end{align*}
so that
\begin{align*}
	|M_R^\pm| = \sum_{n \in \mathbb{Z} \setminus \{0\}} |S_{R,n}^\pm| = 2 \cdot \sum_{n \in \mathbb{N}} |S_{R,n}^\pm|.
\end{align*}
Thus we only need to consider $n \in \mathbb{N}$. Since we are in $\T_{g,n}^\epsilon$, $S_{R,n}^\pm$ is empty when $n \ge \frac{e^{R \pm H}}{\epsilon^2}$, we have
\begin{align*}
		|M_R^\pm| =  2 \cdot \sum_{n=1}^{\frac{e^{R \pm H}}{\epsilon^2}} |S_{R,n}^\pm|.
\end{align*}

Fix some $\lambda > 1$ and let's assume $r_X(\mathcal S, \lambda) \ge \epsilon$. Let's also asssume that $R \ge 2\log(r_X(\mathcal S, \lambda))$ so that $e^{R+H} \ge r_X^2(\mathcal S, \lambda)$. For simplicity let's denote
\begin{align*}
	a = \frac{e^{R + H}}{\epsilon^2}, b = \frac{e^{R + H}}{r_X^2(\mathcal S, \lambda)}.
\end{align*}
Then the above assumptions say $a \ge b \ge 1$.
By Corollary \ref{nr} and the Mirzakhani's counting formula (\ref{repharse counting formula}), we have
\begin{align*}
	\mathcal S^+_{R,n} = s_X(\frac{e^{(R + H)/2}}{\sqrt{n}}, \mathcal S) \le \lambda \cdot n_X(\mathcal{S}) \cdot e^{t(R+H)} \cdot \frac{1}{n^t}
\end{align*}
provided that $n \le \min \{a,b\}=b$. Notice now we have
\begin{align*}
	|M_R^+| &\le 2 \cdot \sum_{n=1}^{a} |\mathcal S^+_{R,n}| \le 2 \cdot \sum_{n=1}^{b} |\mathcal S^+_{R,n}| + 2 \cdot \sum_{n=b}^{a} |\mathcal S^+_{R,n}|
\end{align*}
where
\begin{align*}
	&\sum_{n=1}^{b} |\mathcal S^+_{R,n}| \le \lambda \cdot n_X(\mathcal{S}) \cdot e^{t(R+H)} \cdot \sum_{n=1}^{b} \frac{1}{n^t}, \\
	&\sum_{n=b+1}^{a} |\mathcal S^+_{R,n}| \le \sum_{n=b+1}^{a} |\mathcal S^+_{R,b}| \le \sum_{n=1}^{a} |\mathcal S^+_{R,b}| \le \frac{s_X(r_X(\mathcal S, \lambda), \mathcal S)}{\epsilon^2} \cdot e^{R+H}
\end{align*}
whenever $R \ge c_{\lambda} = 2\log(r_X(\mathcal S, \lambda))$. 

When $t > 1$, $\sum_{n=1}^\infty \frac{1}{n^t}$ converges and is bounded by $2$. By assuming $R$ is even larger $\left(R \ge d_\lambda = \log\left(\frac{s_X(r_X(\mathcal S, \lambda), \mathcal S)}{2 \lambda \epsilon^2 n_X(\mathcal S)}\right)\right)$, bigger exponential wins and we have
\begin{align*}
	2\lambda \cdot n_X(\mathcal{S}) \cdot e^{t(R+H)} \ge \frac{s_X(r_X(\mathcal S, \lambda), \mathcal S)}{\epsilon^2} \cdot e^{R+H}
\end{align*}
Thus we have
\begin{align*}
	|M_R^+| \le 8\lambda \cdot n_X(\mathcal{S}) \cdot e^{t(R+H)}
\end{align*}
whenever $R \ge \max\{c_\lambda, d_\lambda\}$. By the coarse distance formula (\ref{coarse distance formula}) we have
\begin{align*}
|\mathcal S_R^{--}| \le |M(\mathcal S) \cdot \X \cap B_R(\X)| \le |M_R^+|,
\end{align*}
$|\mathcal S_R^{--}|$ is from the proof of Theorem \ref{Main Theorem}. Similar to the previous cases, we have
\begin{align*}
\frac{1}{\lambda e^{t(d_\T(\X,\Y)+H)}} \le \frac{|M(\mathcal S) \cdot \Y \cap B_R(\X)|}{n_X(\mathcal S)e^{t R}} \le \lambda \cdot 8e^{t(d_\T(\X,\Y)+H)}
\end{align*}
whenever $R \ge M(\lambda) = \max\{r_X(\mathcal S, \lambda), c_\lambda, d_\lambda\}$. The second result of Corollary \ref{Main Corollary} follows.
\end{proof}
\begin{proof}[Proof of upper bound of Corollary \ref{second main corollary}]
We now consider $t=\frac{h}{2}=1$. Since in this case $\ML(\mathbb{Z})$ is 1 dimensional, we have $\ML(\mathbb{Z}) = M(\mathcal S)$.

Notice from the above proof of $M(\mathcal S)$, when $t=1$, $\sum_{n=1}^b \frac{1}{n} \le \log(b+1)$ where
\begin{align*}
	\log(b+1) =  \log(\frac{e^{R + H}}{r_X^2(\mathcal S, \lambda)}+1) \le R
\end{align*}
by assuming $r_X(\mathcal S, \lambda)$ sufficiently large. Moreover, we have
\begin{align*}
	\lambda \cdot n_X(\mathcal{S}) \cdot R \cdot e^{R+H} \ge \frac{s_X(r_X(\mathcal S, \lambda), \mathcal S)}{\epsilon^2} \cdot e^{R+H}
\end{align*}
when $R$ is large $\left(R \ge l_\lambda = \frac{s_X(r_X(\mathcal S, \lambda), \mathcal S)}{\lambda \epsilon^2 n_X(\mathcal S)}\right)$. Thus we have
\begin{align*}
	|M_R^+| \le 4\lambda \cdot n_X(\mathcal{S}) \cdot R \cdot e^{R+H}
\end{align*}
whenever $R \ge M(\lambda) = \max\{c_\lambda, l_\lambda\}$. Similar to previous arguments, this shows
\begin{align*}
	 |M(\mathcal S) \cdot \Y \cap B_R(\X)|  \stackrel{*4JF(\X,\Y)}{\preceq} n_X(\mathcal S) \cdot R \cdot e^{\frac{h}{2} R}
\end{align*}
when $\frac{h}{2}=1$.
\end{proof}

\section{Proof of theorem \ref{second main theorem} and corollary \ref{second main corollary}} \label{proof of second main theorem}
Let $\underline{\gamma}$ be a multicurve satisfying the conditions in Theorem \ref{second main theorem}. Given $s \in \mathbb{N}$, we denote
\begin{align*}
&[\underline{\gamma}, s] = \{\gamma \in [\underline{\gamma}] \mid c_\gamma = s\}, \\
&\ML([\underline{\gamma}, s]) = \bigsqcup_{\gamma \in [\underline{\gamma}, s]} \ML(\gamma),
\end{align*}
and we denote $\#[\underline{\gamma}, s]$ the number of $\gamma \in [\underline{\gamma}], c_\gamma =s$. Indeed, this number equals, up to $\Mod_{g,n}$, the number of topological types of curves composing the set $\ML([\underline{\gamma}, s])$. For any $l < s \in \mathbb{N}$, we denote
\begin{align*}
[\underline{\gamma}, s, l] = \left\{\gamma = \sum_{i=1}^k a_i \gamma_i \in [\underline{\gamma}, s] \mid |a_i| \ge l \text{\ for any i}\right\},
\end{align*}
and $\ML([\underline{\gamma}, s, l]), \#[\underline{\gamma}, s, l]$ are similarly defined as above.
\begin{lemma}
	Let $\underline{\gamma} = \sum_{i=1}^k \gamma_i$ be a multicurve with all coefficients equal to one and of maximal dimension $k= \frac{h}{2}$. For $s \ge h-2$ we have
	\begin{align}
	\#[\underline{\gamma}, s] \ge \frac{s^{k-1}}{2^{k-1}(k-1)!}.
	\end{align}
	In particular, there exists a $t$ such that for any $s \ge h-2$, we have
	\begin{align}\label{t for large s}
	\#[\underline{\gamma}, s, \frac{s}{t}] \ge \frac{1}{2}\#[\underline{\gamma}, s] \ge \frac{s^{k-1}}{2^k(k-1)!}.
	\end{align}
\end{lemma}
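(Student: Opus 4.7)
The plan is to view $\#[\underline{\gamma}, s]$ as a count of integer coefficient vectors. Since $\gamma_1, \ldots, \gamma_k$ are fixed distinct disjoint simple closed curves, every element $\gamma = \sum_{i=1}^k a_i \gamma_i$ of $[\underline{\gamma}]$ is uniquely determined by its vector $(a_1, \ldots, a_k) \in (\mathbb{Z} \setminus \{0\})^k$. Thus $\#[\underline{\gamma}, s]$ counts such vectors with $\sum_{i=1}^k |a_i| = s$. Decomposing each vector into a sign pattern in $\{\pm 1\}^k$ together with the composition $(|a_1|, \ldots, |a_k|)$ of $s$ into $k$ positive parts, one gets $\#[\underline{\gamma}, s] = 2^k \binom{s-1}{k-1}$; restricting to purely positive tuples already yields $\#[\underline{\gamma}, s] \ge \binom{s-1}{k-1}$.

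For the first lower bound, I would expand $\binom{s-1}{k-1} = (s-1)(s-2)\cdots(s-k+1)/(k-1)!$ and observe that each factor $s - j$ with $1 \le j \le k-1$ satisfies $s - j \ge s/2$ precisely when $s \ge 2(k-1) = h - 2$. This is exactly where the hypothesis $s \ge h-2$ is used. It follows that $\binom{s-1}{k-1} \ge (s/2)^{k-1}/(k-1)! = s^{k-1}/(2^{k-1}(k-1)!)$, yielding the first inequality.

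For the second inequality, I would bound from above the number of \emph{bad} multicurves in $[\underline{\gamma}, s]$, namely those with $|a_j| < s/t$ for some $j$. A union bound over $j$ together with the symmetry between coordinates reduces matters to bounding the subset with $|a_1| < s/t$; the total is at most $k$ times this. For each admissible value of $a_1$, the number of completions $(a_2, \ldots, a_k) \in (\mathbb{Z}\setminus\{0\})^{k-1}$ with $\sum_{i \ge 2}|a_i| = s - |a_1|$ is at most $2^{k-1}\binom{s-|a_1|-1}{k-2} \le 2^{k-1}\binom{s-2}{k-2}$. Summing over the two signs and at most $\lfloor s/t \rfloor$ absolute values of $a_1$, and then over $j$, the bad count is at most $2^k\, k \, \lfloor s/t \rfloor \binom{s-2}{k-2}$. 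Dividing by $\#[\underline{\gamma}, s] = 2^k \binom{s-1}{k-1}$ and using the elementary identity $\binom{s-2}{k-2}/\binom{s-1}{k-1} = (k-1)/(s-1)$, the bad fraction is at most $k(k-1)s/(t(s-1))$.

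To force this fraction below $1/2$ it suffices to choose $t \ge 2k(k-1)\cdot s/(s-1)$. Since $s \ge h - 2 \ge 2$ when $k \ge 2$, we have $s/(s-1) \le 2$, so taking $t = 4k(k-1)$ works uniformly in $s$; the degenerate case $k=1$ is trivial because then $|a_1| = s \ge s/t$ automatically. This establishes $\#[\underline{\gamma}, s, s/t] \ge \tfrac{1}{2}\#[\underline{\gamma}, s]$, and combining with the first inequality gives the claimed bound $s^{k-1}/(2^k(k-1)!)$. The proof is entirely elementary combinatorics; the only (mild) obstacle is setting up the union bound correctly and noticing that the hypothesis $s \ge h-2$ does double duty, both guaranteeing $s-j \ge s/2$ for the first inequality and $s/(s-1) \le 2$ for the uniform choice of $t$.
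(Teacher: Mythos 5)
Your proof is correct. For the first inequality you argue exactly as the paper does: count compositions of $s$ into $k$ positive parts to get $\binom{s-1}{k-1}$, then bound each factor $s-j \ge s/2$ for $1 \le j \le k-1$ using $s \ge 2(k-1) = h-2$. (You note that accounting for both signs of each coefficient actually gives $\#[\underline{\gamma}, s] = 2^k\binom{s-1}{k-1}$, whereas the paper writes $\binom{s-1}{k-1}$; since only a lower bound is used, the discrepancy is harmless, and your version is the more defensible one.)

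For the second inequality you take a genuinely different route. The paper rescales the lattice points $C_s = \{x \in \mathbb{N}^k : \sum_i x_i = s\}$ by $1/s$, invokes weak convergence of the normalized counting measures to the uniform measure $\mu$ on the standard simplex, and deduces that $\#[\underline{\gamma}, s, s/t]/\#[\underline{\gamma}, s] \to \mu(C^t)/\mu(C)$ as $s \to \infty$, then picks $t$ so this limit exceeds $\tfrac12$. That argument is conceptually clean and in fact shows the ratio tends to $1$, but it is asymptotic in $s$: to get the bound for all $s \ge h-2$ one must separately observe that for each of the finitely many small $s$ below the threshold, the constraint $|a_i| \ge s/t$ becomes vacuous once $t > s$. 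Your union bound sidesteps this entirely. By estimating the bad set via $\binom{s-2}{k-2}/\binom{s-1}{k-1} = (k-1)/(s-1)$ and $s/(s-1) \le 2$, you obtain the explicit, uniform choice $t = 4k(k-1)$ valid for every $s \ge h-2$, and you also address the degenerate case $k=1$ which the paper leaves implicit. You trade the qualitative ``ratio $\to 1$'' information for a fully elementary and effective bound, which is all the lemma actually requires.
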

\begin{proof}
	The number $\#[\underline{\gamma}, s]$ equals to the number of ordered $k$-tuples $(x_1, \cdots, x_k) \in \mathbb{N}^k$ such that $\sum_{i=1}^k x_i = s$. It's a standard combinatorics fact this number is ${s-1 \choose k-1}$, which is greater than $\frac{s^{k-1}}{2^{k-1}(k-1)!}$ whenever $s \ge 2(k-1) = h-2$.
	
	For any $x=(x_1, \cdots, x_k) \in \mathbb{R}_+^k$ we denote $\delta_x$ the corresponding dirac measure and denote $\|x\| = \sum_{i=1}^k x_i$.  We define the following sets
	\begin{align*}
	& C = \{x \in \mathbb{R}_+^k \mid \|x\| = 1\}, \\
	& C^t = \{x \in C \mid x_i \ge \frac{1}{t} \text{\ for any } i \}, \\
	& C_s = \{x \in \mathbb{N}^k \mid \|x\| = s \}, \\
	& C^{\frac{s}{t}}_s = \{x \in C_s \mid x_i \ge \frac{s}{t} \text{\ for any i}\}
	\end{align*}
	where $t, s \in \mathbb{N}$. Define the measures $\delta_s = \sum_{x \in C_s} \delta_{\frac{x}{s}}$ and $\delta^{ \frac{s}{t}}_s  = \sum_{x \in C^{\frac{s}{t}}_s} \delta_{\frac{x}{s}}$. Denote the standard probability measure on $C$ as $\mu$, a classic measure theory result says the following ratio converges, and we have
	\begin{align*}
	\lim_{s \to \infty} \frac{\#[\underline{\gamma}, s, \frac{t}{s}]}{\#[\underline{\gamma}, s]} = \lim_{s \to \infty}  \frac{\delta^{\frac{s}{t}}_s(C)}{\delta_{s}(C)} = \frac{\mu(C^t)}{\mu(C)}.
	\end{align*}
	Thus by picking a $t$ large enough the second equation (\ref{t for large s}) above holds true.
\end{proof}

\begin{corollary} \label{gamma and underline gamma}
	Let $\underline{\gamma} = \sum_{i=1}^k \gamma_i$ be a multicurve with all coefficients equal to one and of maximal dimension $k= \frac{h}{2}$. For any $\gamma \in [\underline{\gamma}, s, \frac{s}{t}]$, we have
	\begin{align}
	& (\ell_\X(\gamma))^2 \ge \frac{s}{t} \cdot \sum_{i=1}^k |a_i| \ell_\X^2(\gamma_i), \nonumber \\
	& \frac{s}{t} \cdot \ell_\X (\underline{\gamma}) \le \ell_\X(\gamma) \le s \cdot \ell_\X (\underline{\gamma}). \label{bounding relation 2}
	\end{align}
	This means for any $\gamma \in [\underline{\gamma}, s, \frac{s}{t}]$ and any $\lambda > 1$, we have
	\begin{align} \label{sXLalpha}
	s_X(L, \gamma) \ge s_X(L, s \cdot \underline{\gamma}) \ge \frac{1}{\lambda} \cdot \frac{L^h}{s^h} n_X( \underline{\gamma})
	\end{align}
	for $L \ge s \cdot r_X( \underline{\gamma}, \lambda)$.
\end{corollary}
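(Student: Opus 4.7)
The plan is to establish the three inequalities in the order they appear, since each builds on the previous. For the quadratic bound, I would simply expand
\[
(\ell_\X(\gamma))^2 = \left(\sum_{i=1}^k |a_i| \ell_\X(\gamma_i)\right)^2 \ge \sum_{i=1}^k |a_i|^2 \ell_\X^2(\gamma_i),
\]
dropping the nonnegative cross-terms, and then invoke $|a_i| \ge s/t$ to replace one factor of $|a_i|$ by $s/t$. The two-sided linear bound in (\ref{bounding relation 2}) is immediate from $\ell_\X(\gamma) = \sum |a_i|\ell_\X(\gamma_i)$ together with the double inequality $s/t \le |a_i| \le s$; the upper bound on each $|a_i|$ follows because $\sum_i |a_i| = s$ and each $|a_i| \ge 1$.

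For the first inequality in (\ref{sXLalpha}), I plan to construct an injection from the orbit of $s\cdot\underline{\gamma}$ into that of $\gamma$ that respects the length bound $L$. Every $\sigma \in \Mod_{g,n}\cdot(s\underline{\gamma})$ has the form $\sigma = s\cdot U$ for a unique unweighted support $U = \{\beta_1,\dots,\beta_k\}$ in the orbit of $\underline{\gamma}$, so $\sigma$ is determined by $U$ alone. For each such $U$, I pick once and for all a $\gamma$-typed multicurve $\sigma'_U = \sum_i a_i \beta_i$ with the same support $U$; such a choice exists because $\gamma \in [\underline{\gamma}]$, so the orbit of $\gamma$ realizes every support occurring in the orbit of $\underline{\gamma}$. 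The assignment $\sigma \mapsto \sigma'_U$ is injective because distinct $\sigma$ produce distinct supports, and since $\ell_\X(\sigma'_U) = \sum |a_i|\ell_\X(\beta_i) \le s\sum_i \ell_\X(\beta_i) = \ell_\X(\sigma)$ (this is exactly the upper bound of (\ref{bounding relation 2}) for the configuration $U$), the injection sends length-$\le L$ multicurves to length-$\le L$ multicurves, giving $s_X(L,\gamma) \ge s_X(L, s\underline{\gamma})$.

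The remaining inequality is a direct application of Corollary \ref{nr} together with the reformulated counting formula (\ref{repharse counting formula}). By Corollary \ref{nr}, $s_X(L, s\underline{\gamma}) = s_X(L/s, \underline{\gamma})$ and $n_X(s\underline{\gamma}) = n_X(\underline{\gamma})/s^h$. Applying the counting formula at scale $L/s$, valid whenever $L/s \ge r_X(\underline{\gamma},\lambda)$, i.e.\ $L \ge s \cdot r_X(\underline{\gamma},\lambda)$, yields
\[
s_X(L/s, \underline{\gamma}) \ge \tfrac{1}{\lambda} n_X(\underline{\gamma})(L/s)^h = \frac{L^h \, n_X(\underline{\gamma})}{\lambda s^h},
\]
completing the chain.

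The only subtle point will be verifying that the injection above is genuinely well-defined and injective when $\underline{\gamma}$ has symmetries permuting its components. The key observation is that every element of $\Mod_{g,n}\cdot(s\underline{\gamma})$ has all coefficients equal to $s$, so it is fully determined by its unweighted support; consequently distinct orbit elements produce distinct supports and therefore distinct images under any fixed choice of section $U \mapsto \sigma'_U$. Beyond this bookkeeping, the argument is routine.
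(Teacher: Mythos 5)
Your proof is correct and takes essentially the same approach as the paper, which simply says the first two inequalities follow from the definition of $[\underline{\gamma}, s, \frac{s}{t}]$ and the third from Corollary \ref{nr}. The only substantive content you add beyond what the paper writes is the explicit injection $\sigma \mapsto \sigma'_U$ establishing $s_X(L,\gamma) \ge s_X(L, s\underline{\gamma})$; the paper leaves this step implicit (it follows in spirit from the upper bound in (\ref{bounding relation 2}) applied in each configuration, but does require exactly the kind of support-preserving, length-decreasing section you construct, with the well-definedness point you raise about permuted coefficients handled by fixing a choice once and for all). Your care here is warranted and the rest is routine bookkeeping matching the paper.
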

\begin{proof}
	The first two equations follow from the definition of $[\underline{\gamma}, s, \frac{s}{t}]$. The third equation follows from Corollary \ref{nr}.
\end{proof}
Now we are ready to prove the Theorem \ref{second main theorem}.
\begin{proof}[Proof of Theorem \ref{second main theorem}]
	Define
	\begin{align*}
	& \mathcal S_R = D\left(\ML([\underline{\gamma}])\right) \cdot \X \cap B_R(\X), \\
	& \mathcal S_R^- = \left\{\alpha = \sum_{i=1}^{k} a_i \alpha_i \in \ML([\underline{\gamma}]) \mid \sum_{i=1}^k |a_i| \ell^2_{\X}(\alpha_i) \le e^{R-H}\right\}, \\
	& \mathcal S_R^-(s) = \left\{\alpha = \sum_{i=1}^{k} a_i \alpha_i \in \ML([\underline{\gamma}, s]) \mid \sum_{i=1}^k |a_i| \ell^2_{\X}(\alpha_i) \le e^{R-H}\right\}, \\
	& \mathcal S_R^-(s, \frac{s}{t}) = \left\{\alpha = \sum_{i=1}^{k} a_i \alpha_i \in \ML([\underline{\gamma}, s, \frac{s}{t}]) \mid \ell_{\X}(\alpha) \le \sqrt{\frac{s}{t}} \cdot e^{\frac{R-H}{2}} \right\}.
	\end{align*}
	Notice for any $\alpha \in \mathcal S_R^-(s, \frac{s}{t})$,  by previous Corollary \ref{gamma and underline gamma} we have
	\begin{align*}
	\frac{s}{t} \cdot \sum_{i=1}^k |a_i| \ell_\X^2(\alpha_i) \le (\ell_\X(\alpha))^2 \le \frac{s}{t}\cdot e^{R-H}
	\end{align*}
	so that $S_R^-(s, \frac{s}{t}) \subset \mathcal S_R^-(s)$. Fix some $\lambda > 1$, by Theorem \ref{Counting Formula} and formulas (\ref{t for large s}), (\ref{sXLalpha}), we have
	\begin{align*}
	\left|\mathcal S_R^-(s, \frac{s}{t})\right| & \ge \sum_{\gamma \in [\underline{\gamma}, s, \frac{s}{t}]} s_X \left( \sqrt{\frac{s}{t}} \cdot e^{\frac{R-H}{2}}, \gamma \right) \\
	& \ge \frac{1}{\lambda} \cdot \#[\underline{\gamma}, s, \frac{s}{t}] \cdot \frac{\left(\sqrt{\frac{s}{t}} \cdot e^{\frac{R-H}{2}}\right)^{h}}{s^h} \cdot n_X( \underline{\gamma}) \\
	& \ge \frac{1}{\lambda} \cdot \frac{s^{k-1}}{2^k(k-1)!} \cdot \frac{e^{k(R-H)}}{s^kt^k} \cdot n_X( \underline{\gamma}) \\
	& = \frac{1}{\lambda} \cdot \frac{ n_X(\underline{\gamma}) \cdot e^{k(R-H)}}{2^k(k-1)!t^k} \cdot \frac{1}{s}
	\end{align*}
	provided that
	\begin{align*}
	 \sqrt{\frac{s}{t}} \cdot e^{\frac{R-H}{2}} \ge s \cdot r_X( \underline{\gamma}, \lambda) \text{\ and\ } s \ge h-2.
	\end{align*}
	That is,
	\begin{align*}
	h-2 \le s \le \frac{e^{R-H}}{t \cdot r^2_X(\underline{\gamma}, \lambda)}.
	\end{align*}
	Thus, we have
	\begin{align*}
	|\mathcal S_R| &\ge |\mathcal S_R^-| = \sum_{s \in \mathbb{N}} |\mathcal S_R^-(s)| \\ &\ge \sum_{s=h-2}^{ \frac{e^{R-H}}{t \cdot r^2_X(\underline{\gamma}, \lambda)}} \left|\mathcal S_R^-(s, \frac{s}{t})\right| \\
	&\ge \frac{1}{\lambda} \cdot \frac{ n_X(\underline{\gamma}) \cdot e^{k(R-H)}}{2^k(k-1)!t^k} \cdot \sum_{s=h-2}^{ \frac{e^{R-H}}{t \cdot r^2_X(\underline{\gamma}, \lambda)}} \frac{1}{s} 
	\end{align*}
By assuming $R$ is large $\left(R \ge M(\lambda) = 2\left(H+\log(tr_X(\underline{\gamma}, \lambda))+\log(h-2)\right) \right)$, the summation $\sum_{s=h-2}^{ \frac{e^{R-H}}{t \cdot r^2_X(\underline{\gamma}, \lambda)}} \frac{1}{s} \ge \frac{R}{2}$.  We now have
	\begin{align*}
	|\mathcal S_R| \ge \frac{1}{\lambda} \cdot \frac{n_X(\underline{\gamma})}{2^{k+1}(k-1)!t^k} \cdot R \cdot e^{\frac{hR}{2}}
	\end{align*}
Similar to the proof of Theorem \ref{Main Theorem}, we have
	\begin{align*}
	\left|D\left(\ML([\underline{\gamma}])\right) \cdot \Y \cap B_R(\X)\right| &\ge \left|S_{R-d_\T(\X,\Y)}\right| \\
	&\ge \frac{1}{\lambda JF(\X,\Y)} \cdot f(\underline{\gamma}) \cdot R \cdot  e^{\frac{hR}{2}}
	\end{align*}
whenever $R \ge M(\lambda)$, and 
	\begin{align*}
		f(\underline{\gamma}) = \frac{n_X(\underline{\gamma})}{2^{k+1}(k-1)!t^k}
	\end{align*}
	This concludes the proof of Theorem \ref{second main theorem}.
\end{proof}
Finally, we discuss about how  Corollary \ref{second main corollary} follows from previous results.
\begin{proof}[Proof of Corollary \ref{second main corollary}]
When $\frac{h}{2}=1$, $\ML(\mathbb{Z})$ is one dimensional. Take any simple closed curve $\underline{\gamma}$, then it's maximal dimension and $\ML(\mathbb{Z}) = \ML([\underline{\gamma}])$. As a special case of Theorem \ref{second main theorem}, we have $f(\underline{\gamma}) = n_X(\underline{\gamma})= n_X(\mathcal S)$ and
\begin{align*}
	n_X(\mathcal S) \cdot R \cdot e^{R} \stackrel{*JF(\X,\Y)}\preceq \left|D\left(\ML(\mathbb{Z})\right) \cdot \Y \cap B_R(\X)\right|
\end{align*}
This gives us the lower bound. The upper bound follows from an alternation of proof of Corollary \ref{Main Corollary}, see section \ref{Proof of the Main Theorem}. This concludes the result.
\end{proof}

\begin{remark}
	We briefly discuss about the difficultly using Theorem \ref{Coarse Distance Formula} to obtain an upper bound estimate for the coarse asymptotic rate of $|D(\ML(\mathbb Z)) \cdot \Y \cap B_R(\X)|$. When we are dealing with a conjugacy class of multicurves, say the conjugacy class of $\gamma = \sum_{i=1}^k a_i \gamma_i$, we have a ``bounding relation" (\ref{bounding relation 1}) between $\ell^2_\X(\gamma)$ and $\sum_{i=1}^k |a_i| \ell^2_\X(\gamma_i)$ depends only on the coefficients of $\gamma$, and later we use this relation to estimate the number of corresponding lattice points inside a ball of radius $R$. In the case of $\ML([\underline{\gamma}])$, $\underline{\gamma}$ being a maximal dimensional multicurve with all coefficients equal to one, we consider a subset of $\ML([\underline{\gamma}])$ with ``balanced weights" so that a uniform ``bounding relation" (\ref{bounding relation 2}) holds. This idea in fact works for multicurves with``balanced weights". Namely, for any $m \ge 0$, we can define the following subset of multicurves
	\begin{align*}
		\ML(\mathbb{Z}, m) = \{\alpha = \sum_{i=1}^k a_i \alpha_i \in \ML(\mathbb{Z}) \mid |a_i| \ge \frac{c_\alpha}{m} \text{\ for each\ } i\}
	\end{align*}
By using similar ideas one can show
\begin{align*}
	\left|D\left(\ML(\mathbb{Z}, m)\right) \cdot \Y \cap B_R(\X)\right| \stackrel{*JF(\X,\Y)}\preceq s^{\frac{h}{2}} \cdot n_X(\mathcal S) \cdot R \cdot e^{\frac{h}{2}R}
\end{align*}
However, for a sequence of multicurves $\{\gamma_j\}_{j \in \mathbb{N}}$ such that $\gamma_j$ is outside $\ML(\mathbb{Z}, j)$, the possible ``bounding relations" get more and more coarse, and does not yield a uniform upper bound as above for $\ML(\mathbb Z)$.
\end{remark}

\bibliographystyle{amsplain}
\bibliography{references}

\end{document}